\documentclass[12pt]{article}
\usepackage{xspace}
\usepackage[french]{babel}
\usepackage[latin1]{inputenc}
\usepackage[T1]{fontenc}
\usepackage{pslatex}
\usepackage{amsmath,amssymb,amsfonts,amsthm}
\usepackage[fixlanguage]{babelbib}
\selectbiblanguage{french}
\usepackage{comment}
\usepackage{imakeidx}
\usepackage{hyperref}%
\usepackage{fullpage}
\DeclareMathOperator{\Gl}{GL}

\def\ZZ{\ensuremath{\mathbb Z}}

\def\QQ{\ensuremath{\mathbb Q}}
\def\CC{\ensuremath{\mathbb C}}

\def\Qp{{\ensuremath{\QQ_p}}}
\def\Cp{\ensuremath{\CC_p}}
\def\Zp{\ensuremath{\ZZ_p}}

\def\gL{\ensuremath{\mathfrak L}}
\def\gX{\ensuremath{\mathfrak X}}
\def\bL{\ensuremath{\mathbf L}}

\def\cB{\ensuremath{\mathcal B}}
\def\cD{\ensuremath{\mathcal D}}
\def\cF{\ensuremath{\mathcal F}}
\def\cH{\ensuremath{\mathcal H}}

\def\limproj#1{\displaystyle{\lim_{\underset{n}{
\leftarrow}}}\
}

\DeclareMathOperator{\ord}{ord}
\DeclareMathOperator{\Gal}{Gal}

\DeclareMathOperator{\Fil}{Fil}
\hyphenation{mo-du-lo}
\hyphenation{i-ma-ge}
\hyphenation{ex-po-nen-tiel-le}
\hyphenation{par-ti-cu-liers}
\hyphenation{ho-mo-mor-phis-me}
\hyphenation{iso-mor-phis-me}
\hyphenation{suf-fi-sam-ment}
\hyphenation{Iwa-sa-wa}
\hyphenation{mo-du-le}
\hyphenation{sui-van-te}
\hyphenation{par-ti-cu-lier}
\hyphenation{in-ter-po-la-tion}
\hyphenation{ap-pli-ca-tion}

\theoremstyle{plain}
\newtheorem{thm}{Théorème}[section]
\newtheorem{thmb}{Théorème}
\newtheorem{lem}[thm]{Lemme}
\newtheorem{prop}[thm]{Proposition}
\newtheorem{cor}[thm]{Corollaire}
\theoremstyle{definition}
\newtheorem{defn}[thm]{Définition}
\newtheorem{rem}[thm]{Remarque}
\newtheorem*{reme}{Remarque}

\def\smallmat#1{\left(\begin{smallmatrix}#1\end{smallmatrix}\right)}
\def\id{\mathrm{Id}}

\def\lgg#1{\##1}
\def\inclus#1#2{{#1 \subset #2}}
\def\bomega#1#2{\Omega_{#1}^{#2}}
\def\txin#1#2{\Xi_{#1}^{#2}}
\def\mxiinf#1#2{\gX_{#1}^{#2}}
\def\mlog#1#2{\ell_{#2}^{#1}}
\def\mloginf#1#2{\textrm{Log}_{#2}^{#1}}
\def\JJ{J}
\def\Jp{J'}
\def\newtH{t'_{\HT,d}}
\def\newtHd{t'_{\HT,2}}
\def\II#1#2{]#1,#2]}
\def\III#1{]#1,t_{\HT,d}]}
\def\IIIprim#1{]#1,\newtH]}
\def\IIII#1{]#1,t_{\HT,3}]}
\def\ZZZ#1#2#3#4{P^{#1}_{#2,#4,#3}}
\def\Zinftyu#1#2#3{\textrm{Per}^{#1}_{#2,#3}}
\def\Module#1#2#3#4{\gL^{#1}_{#2,#3}}
\def\LLL#1#2#3{\bL_{#2}(#3)}
\def\TH#1#2#3{T_{#1,#2}^{#3}}
\def\Zi{H^1_{Iw}}
\def\DD{{\mathcal{D}}}

\def\np{n+1}
\def\nm{-(n+1)}
\def\phiun{(1\otimes\varphi)}

\def\HT{\textrm{HT}}
\def\Ne{\textrm{N}}

\def\norm#1#2{|| #1 ||_{#2}}
\def\abs#1#2{| #1 |_{#2}}
\def\res#1{M_{#1}}
\def\UU#1#2{{\varphi_{#1}^{#2}}}
\def\VV#1{\varphi_{#1}}
\def\Er#1#2{\mathcal{E}_{#2}(#1)}
\def\tborne#1{\tilde{\omicron}_p(#1)}
\def\borne#1{\omicron_p(#1)}
\def\coeffZ#1#2{Z_{#2,#1}}
\def\sm#1{t_{sm,#1}}
\def\slope#1{sl(#1)}
\def\EEE{\Qp}
\def\ot{_{\Qp}}
\def\oEE{{_{\EEE}}}
\def\oEE{{}}
\def\oZE{{{\Zp}}}

\def\Gg{g}
\def\Reg#1{\textrm{Reg}_{#1}}
\def\rabiot{\epsilon}
\def\U{\iota}
%\setcounter{tocdepth}{2}
%\makeindex
\usepackage[noblocks]{authblk}
\title{Note sur les périodes d'Iwasawa associées à un $\varphi$-module filtré}
\author{Bernadette Perrin-Riou}
\affil{Université Paris-Saclay, CNRS, Laboratoire de mathématiques d'Orsay, 91405, Orsay, France.}
\begin{document}
%\subjclass[2010]{11F03, 11F11, 11F67, 11F30}
%\keywords{Iwasawa, $\varphi$-module filtré}
%%math.NT
\maketitle
Soit $p$ un nombre premier impair. Soit $K_n=\Qp(\mu_{p^n})$ le corps de définition
des racines $p^n$-ièmes de l'unité et $K_\infty$ la réunion des $K_n$.
Soit $\DD$ un $\varphi$-module filtré sur $\Qp$ c'est-à-dire
un $\Qp$-espace vectoriel de dimension finie $d$ muni d'un endomorphisme $\varphi$ bijectif
et d'une filtration décroissante exhaustive et séparée $\Fil^\bullet \DD$.
Soit $\cH$ l'anneau des séries convergentes sur toute boule
$B(\rho) = \{x \in \Cp \;\textrm{ tel que } \;\abs{ x}{p} \leq \rho\}$
pour $\rho < 1$, $\Lambda=\Zp[[x]]$ et $\Lambda\ot =\Qp \otimes \Lambda$. On note pour $\rho < 1$
$$\norm{f}{\rho}= \sup_{\abs{x}{p}=\rho} \abs{f(x)}{p}=\sup_{\abs{x}{p}\leq \rho} \abs{f(x)}{p}$$

L'étude des représentations $p$-adiques cristallines sur $\Qp$ en théorie d'Iwasawa
conduit à définir, \emph{uniquement par des conditions analytiques},
des sous-$\cH$-modules naturels $\LLL{}{N}{\DD}$ de $\cH \otimes \DD$
où $\DD$ est le $\varphi$-module filtré sur $\Qp$ associé à la représentation $p$-adique.
\cite{jams}
Dans cet article, nous les étudions sans référence aux représentations $p$-adiques
et en construisons une base "explicite". Il s'agit d'une étude purement analytique.

Fixons un générateur $u$ de $1+p\Zp$.
Soit $t_{H,d}$ l'entier tel que $\Fil^{t_{H,d}}\DD\neq 0$, $\Fil^{t_{H,d}+1}\DD=0$.
Si $N$ est un entier $\geq 0$,
définissons $\LLL{}{N}{\DD}$ comme le sous-$\cH$-module de
$\cH \otimes \DD$ formé des
$g$ tels que pour tout entier $j \leq t_{H,d}$ et pour $n \geq N$
\begin{equation}
g(u^{j}\zeta-1) \in (1\otimes\varphi)^{\np}K_n \otimes \Fil^{j} \DD
\end{equation}
pour toute racine de l'unité $\zeta$ d'ordre $p^{n}$.
Comme $\Fil^{j} \DD=\DD$ pour $j \ll 0$, il n'y a qu'un nombre fini de conditions.
Une notion importante est la notion de \emph{pente} : si $f$ est un élément de
$\cH \otimes \DD$, on dit que $f$ est de pente $\leq s$ si
la suite des $\norm{p^{n(s+t_{H,d})} (1\otimes \varphi)^{-n} f }{\rho^{1/p^n}}$
est bornée pour un $\rho <1$. Nous nous intéressons au sous-$\Lambda_{\Qp}$-module
engendré par les éléments de $\LLL{}{N}{\DD}$ qui sont de pente $0$.

Si  $\JJ$ est un intervalle fini de $\ZZ$, on pose
 $$\mloginf{\JJ}{0}(x)= \prod_{j\in \JJ} \log(u^{-j} (1+x))
  =\prod_{j\in \JJ} \prod_{n=0}^\infty \frac{\xi_n(u^{-j}(1+x)-1)}{p}$$
  avec $\xi_n=\frac{(1+x)^{p^n}-1}{(1+x)^{p^{n-1}}-1}$ pour $n\geq 1$ et
  $\xi_0=px$
 et plus généralement
 $$\mloginf{\JJ}{N}(x)\index{$\mloginf{\JJ}{N}$}=\prod_{j\in \JJ}\prod_{n=N}^ \infty \frac{\xi_n(u^{-j}(1+x)-1)}{p}.$$

Nous aurons besoin d'introduire un entier $\delta\geq 0$ dépendant de la longueur
de la filtration de $\DD$
et nulle dès que cette longueur est strictement inférieure à $p$.
Rappelons que le $\varphi$-module filtré $\DD$ est dit \textsl{faiblement admissible}
s'il admet un réseau fortement divisible
(voir \ref{def:faibleadm} pour le rappel de la définition initiale).
\begin{thmb}
Soit $\DD$ un $\varphi$-module filtré de dimension $d$.
\begin{enumerate}
\item Le $\cH$-module $\LLL{}{N}{\DD}$ est de rang $d$.
\item On associe de manière explicite à toute base $\cB$ de $\DD$ adaptée
à la filtration $\Fil ^\bullet \DD$
une base $\left(\Zinftyu{}{\cB}{N}(v)\right)_{v\in \cB}$ de $\LLL{}{N}{\DD}$
formée d'éléments de pente finie.
Lorsque le $\varphi$-module filtré $\DD$ est faiblement admissible et que
$\cB$ est une base adaptée au $\varphi$-module filtré $\DD$ engendrant un réseau
fortement divisible de $\DD$, la base construite est formée d'éléments de pente positive et
inférieure ou égale à $\delta$.
\item Supposons $\delta=0$.
Le sous-$\Lambda\ot$-module de $\LLL{}{N}{\DD}$ engendré par les éléments de pente nulle
est engendré par les $\left(\Zinftyu{}{\cB}{N}(v)\right)_{v\in \cB}$.
Si $t_{H,1} \leq \cdots \leq t_{H,d}$ sont les nombres de Hodge de $\DD$ avec multiplicités,
la suite des diviseurs élémentaires de $\LLL{}{N}{\DD}$ dans
$\cH \otimes \DD$ est
$$[\mloginf{\III{t_{\HT,1}}}{N}; \cdots; \mloginf{\III{t_{\HT,j}}}{N}; \cdots ;
 1].$$
\item Si de plus $\cB$ est adaptée à un raffinement de $\cD$,
la base trouvée est une base adaptée à l'inclusion
du $\cH$-module $\LLL{}{N}{\DD}$ dans $\cH \otimes \DD$.
\end{enumerate}
\end{thmb}

Ces résultats peuvent être utilisés dans l'étude des régulateurs $p$-adiques d'une représentation
$p$-adique cristalline associée à un motif. Nous donnons dans cette introduction l'idée
générale, sans rentrer dans les détails.

Notons $G_\infty=\Gal(K_\infty/\Qp)=\Delta\times \Gamma$
avec $\Delta=\Gal(K_1/\Qp)$ et $\Gamma=\Gal(K_\infty/K_1)$
sa décomposition canonique,
$\chi$ le caractère cyclotomique de $G_\infty$ à valeurs dans $\Zp^\times$.
Soit $\gamma$ un générateur de $\Gamma$.

Soit $V$ une représentation $p$-adique cristalline sur $\Qp$ de dimension $d$
et $T$ un réseau de $V$ stable par $G_{\Qp}$.
Soit $\DD$ le $\varphi$-module filtré sur $\Qp$ faiblement admissible associé.
La théorie d'Iwasawa associée à $V$ et à l'extension cyclotomique $K_\infty$ de $\Qp$ mesure
le module d'Iwasawa
$$\Zi(\Qp, V)=\Qp\otimes \limproj{n}H^1(K_n,T)$$
à l'aide d'une application régulateur \footnote{Une longue histoire depuis
\cite{bpr93}, \cite{bpr-debut}, \cite{jams}, \cite{bpr-colmez}, \cite{semi-stable},
\cite{divelem}, nous ne parlerons pas ici de sa réinterprétation en termes de
$(\varphi,\Gamma)$-modules.
}
$$\Reg{V}: \Zi(\Qp, V) \to \cH(G_\infty) \otimes \DD$$
où
$\cH(G_\infty)=\Zp[\Delta]\otimes \cH(\Gamma)$
et $\cH(\Gamma)$ l'image de $\cH$ par l'application induite
par $x\to \gamma-1$.
On note $Tw$ l'opérateur de twist
sur $\cH(G_\infty)$ induit par $g \to \chi(g) g$ pour $g \in G_\infty$.
\footnote{Sans redéfinir l'opérateur $\psi$ ici, rappelons que l'on trouve
aussi une formulation avec $\cH^{\psi=0}$. On passe facilement de l'une
à l'autre par l'isomorphisme
$\cH(G_\infty) \to \cH^{\psi=0}$
induit par $g\to g\cdot (1+X)=(1+X)^{\chi(g)}$.
}
Si $\LLL{}{G_\infty}{\DD}$\index{$\LLL{}{G_\infty}{\DD}$} est
le sous-module de $\cH(G_\infty) \otimes \DD$
formé des éléments $f$ tels que
pour $j \leq t_{\HT,d}$
\begin{equation}
\eta(Tw^{j}(f))\in \varphi^{f_\eta} ( K_n \otimes \Fil^j\DD)
\end{equation}
pour tout caractère $\eta$ non trivial de conducteur $p^{f_\eta}$ avec $f_\eta \geq 1$
et une définition adaptée pour $f_\eta=0$,
l'image de $\Zi(\Qp, V)$ dans $\cH(G_\infty) \otimes \DD$ est le sous-$\Lambda\ot $-module de
$\LLL{}{G_\infty}{\DD}$ de pente 0.
Par l'isomorphisme de Mellin-Amice $\cH(G_\infty)\cong \Zp[\Delta]\otimes\cH$,
les composantes non triviales relatives à $\Delta$
du $\cH(G_\infty)$-module $\LLL{}{G_\infty}{\DD}$
sont égales aux composantes non triviales
du sous-module de $\Qp[\Delta]\otimes\bL(\DD)$
engendré sur
$\Qp[\Delta]\otimes \Zp[\Gamma]$
par l'image des
$\left(\Zinftyu{}{\cB}{0}(v)\right)_{v\in \cB}$ avec une adaptation dans le cas
de la composante triviale dûe au "facteur
d'Euler".

Lorsque $V$ provient d'une représentation $p$-adique du groupe de Galois
de $\Gal(\overline{\QQ}/\QQ)$ (ou d'un motif),
l'étude du régulateur d'Iwasawa de certains éléments spéciaux
globaux et leur comparaison avec les "périodes d'Iwasawa" introduites ici
permettent de construire des éléments de $\Qp\otimes \Zp[[G_\infty]]$
reliés aux fonctions $L$ $p$-adiques. D'où l'intérêt de comprendre
l'image de $\Zi(\Qp, V)$ dans $\cH(G_\infty) \otimes \DD$, de
même que dans le cas complexe, on écrit les valeurs des fonctions $L$ dans
des périodes complexes. Cela est déjà connu pour les fonctions $L$ $p$-adiques
associées aux courbes elliptiques, aux formes modulaires et à certaines représentations
s'en déduisant (le premier cas connu est dû à Pollack \cite{pollack}).

Donnons le plan du texte.
Dans la première partie, nous construisons, en utilisant les méthodes d'Amice-Vélu, pour  $\JJ$ intervalle fini de $\ZZ$,
des fonctions analytiques $\mxiinf{N}{\JJ}$
par interpolation  en des valeurs du type $u^j\zeta -1$ pour $j\in \JJ$,
$u\in 1+p\Zp$ et $\zeta$ une racine $p^n$-ième de l'unité. Ces fonctions sont divisibles par $\mloginf{\JJ}{N}$,
 en diffèrent par une unité lorsque la longueur de  $\JJ$ est inférieure à $p$. Comme
 $\mloginf{\JJ}{N}$, elles sont définies
 comme des produits convergents mais sont de vitesse de convergence plus grande.

Dans la seconde partie, après avoir rappelé des définitions sur les
$\varphi$-modules filtrés, nous construisons un système libre de rang maximal de
$\LLL{}{N}{\DD}$, associé à une base adaptée à la filtration.
Nous en calculons la pente dans $\cH\otimes \DD$.
Deux sortes de bases particulières se trouvent dans la littérature :
bases engendrant un réseau de $\DD$ fortement divisible, bases associées à un raffinement.
Nous faisons le calcul dans les deux cas car selon
la base choisie, les éléments définis ont des formules plus ou moins agréables et des
propriétés différentes.
En particulier, le calcul dans une base associée à un raffinement permet de mettre en évidence
la dépendance par rapport à la filtration.

Nous traitons plus particulièrement le cas de dimension 2,
autrement dit les résultats de Pollack \cite{pollack} établis dans le cas d'une
représentation $p$-adique associée à une courbe elliptique ayant bonne réduction
supersingulière pour $p > 3$.
Des résultats de ce type peuvent se déduire d'articles utilisant la
notion de $(\varphi,\Gamma)$-modules comme les articles de Lei, Loeffler, Zerbes
et autres (\cite{LLZ10}, \cite{LLZ11}, \cite{LZ}, \cite{llz17}).

Il est possible sans difficulté particulière de travailler avec un $\varphi$-modules filtré
à coefficients dans une extension finie de $\Qp$. Nous ne le faisons pas explicitement.
Nous ne développons pas non plus ici les conséquences relatives aux conjectures principales,
à la croissance de l'ordre du groupe de Tate-Shafarevich le long de l'extension cyclotomique,
etc.

\section{Construction d'éléments de $\cH$}
Dans la suite de ce paragraphe, $r$ désigne un entier strictement positif.
\subsection{Interpolation de polynômes}

On pose $\omega_n=(1+x)^{p^n} -1$ pour $n \geq 0$,
$\xi_n = \omega_n/\omega_{n-1}$\index{$\xi_n$} pour $n \geq 1$.
Pour $n=0$, on prend $\xi_0=p x$.
On a alors $\log(1+x)=\lim_{n\to \infty} \frac{\omega_n}{p^n}
=\prod_{s=0}^\infty \frac{\xi_s}{p}$.

Soit une unité $p$-adique $u$ congrue à 1 modulo $p$
(nous ne la mettons pas dans la notation des définitions qui suivent,
sauf besoin).
Si $f \in \cH$, on note $f^{(j)}(x)=Tw_u^{-j}(f)(x) = f(u^{-j}(1+x)-1)$\index{$f^{(j)}$}:
on a $f^{(j)}(u^j-1)=f(0)$ et
$Tw_u^{j}(f)(0)= f(u^j-1)$.

Ainsi,
\begin{equation*}
\begin{split}
\omega_n^{(j)}&=
\omega_n(u^{-j}(1+x)-1)=u^{-jp^n}(1+x)^{p^n}-1\\
\xi_n^{(j)}&= \sum_{k=0}^{p-1}u^{-jkp^{n-1}} (1+x)^{kp^{n-1}} \text{pour $n>0$}.
\end{split}
\end{equation*}

Rappelons la proposition suivante reprise des travaux d'Amice et Vélu (\cite{av}).
\begin{prop}
\label{av} Soit $n$ un entier $\geq 0$.
Soient $r$ polynômes $Q_0, \cdots, Q_{r-1}$ de degré strictement inférieur à $p^n$. Soit $P$ le
polynôme de degré $< rp^n$ tel que $$P \equiv Q_j\bmod \omega_n^{(j)}$$
pour $0\leq j<r$
et soit $$\delta_i = \sum_{j=0}^i (-1)^{i-j}\binom{i}{j}Q_j$$
pour $0\leq i<r$.
Soit $\rho$ un réel positif (strictement) inférieur à $\rho_0=p^{-\frac{1}{p-1}}$.
Alors,
\begin{equation}\label{const:maj}
  \norm{P}{1} \leq
  p^{\frac{r-1}{p-1}} \max_{0\leq i<r}
\left(\frac{p^{-\frac{i}{p-1}}}{\abs{i!}{p}}
\frac{\norm{\delta_i}{1}}{\abs{u^{p^n}-1}{p}^i}\right)
\leq
p^{\frac{r-1}{p-1}} \max_{0\leq i<r}
\left(\frac{\norm{\delta_i}{1}}{\abs{u^{p^n}-1}{p}^{i}}\right).
\end{equation}
\end{prop}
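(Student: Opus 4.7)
The plan is to reduce the interpolation problem to a triangular linear system via a Newton-type expansion adapted to the moduli, and then track $p$-adic sizes carefully. First, I would establish the unique decomposition
\[
P(x) = \sum_{i=0}^{r-1} a_i(x) \prod_{k=0}^{i-1} \omega_n^{(k)}(x),
\]
with each $a_i$ a polynomial of degree $<p^n$. Existence and uniqueness follow from a degree count together with the pairwise coprimality of the $\omega_n^{(j)}$, itself a consequence of the fact that $u$ topologically generates $1+p\Zp$, so $u^{j-j'}$ is not a $p^n$-th root of unity for $0\le j\ne j'\le r-1$.

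Second, I would rewrite the congruences $P\equiv Q_j\pmod{\omega_n^{(j)}}$ as a triangular system in the $a_i$. The key algebraic observation is that $\omega_n^{(k)}\equiv u^{(j-k)p^n}-1\pmod{\omega_n^{(j)}}$, so each product $\prod_{k<i}\omega_n^{(k)}$ reduces to an explicit $p$-adic scalar modulo $\omega_n^{(j)}$. The diagonal entry at step $i$ is $\prod_{k=1}^i(u^{kp^n}-1)$; using $u\equiv1\pmod p$, one finds $|u^{kp^n}-1|_p=|k|_p\cdot|u^{p^n}-1|_p$, so the diagonal has $p$-adic absolute value $|i!|_p\cdot|u^{p^n}-1|_p^i$.

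Third --- where I expect the bulk of the combinatorial work to lie --- I would solve the triangular system and extract the dominant behaviour of each $a_i$. The definition $\delta_i=\sum_{j\le i}(-1)^{i-j}\binom{i}{j}Q_j$ is precisely the M\"obius-type inversion needed: applying the finite-difference identity to the scalar evaluations $u^{jp^n}$ shows that the leading contribution to $a_i$ is $\delta_i/\prod_{k=1}^i(u^{kp^n}-1)$, while all sub-leading corrections involve $\delta_\ell$ with $\ell<i$ divided by strictly smaller powers of $u^{p^n}-1$. The delicate point is verifying that these sub-leading terms never dominate the target maximum.

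Fourth, I would assemble the bound. The $\omega_n^{(k)}$ have unit leading coefficient and coefficients in $\Zp$, hence Gauss norm $1$; applying the ultrametric inequality to $P=\sum_i a_i\prod_k\omega_n^{(k)}$ gives $\|P\|_1\le\max_i\|a_i\|_1$. Combining the step-three estimate $\|a_i\|_1\le\|\delta_i\|_1/(|i!|_p\cdot|u^{p^n}-1|_p^i)$ with the elementary bound $|i!|_p\ge p^{-i/(p-1)}$ yields the second inequality of the statement. The global factor $p^{(r-1)/(p-1)}$ absorbs the losses from lifting modular representatives to polynomials of controlled Gauss norm --- Amice and V\'elu's sharpening --- and is also the role of the hypothesis $\rho<\rho_0=p^{-1/(p-1)}$, which governs the disks on which these modular lifts behave optimally.
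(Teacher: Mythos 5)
Your plan — expanding $P$ in the Newton basis $\prod_{k<i}\omega_n^{(k)}$, solving the triangular system in the $a_i$, and passing $\|P\|_1\le\max_i\|a_i\|_1$ by the ultrametric inequality — is a genuinely different route from the paper's, which works with a bivariate polynomial $S(x,y)$ of degree $<r$ in an auxiliary variable $y$, uses Amice--V\'elu's change of variables $z\mapsto u^{p^n z}-1$ to convert the weighted coefficient norm of $S$ into that of the polynomial $Q(x,z)=\sum_i\delta_i(x)\binom{z}{i}$, and only at the end specializes $y=(1+x)^{p^n}-1$. Your framing, steps one, two, and the reduction $\|P\|_1\le\max_i\|a_i\|_1$ are all correct; $a_i$ is indeed the divided difference $Q[\beta_0,\dots,\beta_i]$ over the nodes $\beta_j=u^{jp^n}-1$, the diagonal entry is $\prod_{k=1}^i(u^{kp^n}-1)$, and $\bigl|\prod_{k=1}^i(u^{kp^n}-1)\bigr|_p=|i!|_p\,|u^{p^n}-1|_p^i$.

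The genuine gap is in step three, and you flag it yourself without closing it: the term-by-term estimate $\|a_i\|_1\le\|\delta_i\|_1/(|i!|_p|u^{p^n}-1|_p^i)$ is actually false. The nodes $\beta_j$ are not equally spaced, so $a_i$ is not simply $\delta_i$ divided by the diagonal entry. Already for $r=3$ one has, writing $\beta_j=v^j-1$ with $v=u^{p^n}$,
\[
a_2=\frac{\delta_2-\beta_1\delta_1}{\beta_1\beta_2},
\]
so if $\delta_2=0$ and $\delta_1\ne0$ (e.g. $Q_0=0$, $Q_1=1$, $Q_2=2$) then $a_2\ne0$ while your claimed bound would force $a_2=0$. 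What is at stake is precisely the relation between the \emph{divided} differences $a_i$ over the nodes $\beta_j$ and the \emph{finite} differences $\delta_i$ over the integers $j$; proving that the ``sub-leading corrections never dominate the target maximum'' requires a quantitative form of this comparison, and that is exactly what the paper's change of variables $z\mapsto v^z-1$ and the Amice--V\'elu isometry are delivering. Without that ingredient (or an explicit recursive bound showing $\|a_i\|_1\le\max_{\ell\le i}\|\delta_\ell\|_1/(|\ell!|_p|v-1|_p^\ell)$), step three is an assertion, not a proof. Two secondary issues: if your step-three inequality did hold, your final bound would drop the factor $p^{(r-1-i)/(p-1)}$ and thus be strictly sharper than the proposition, which should itself make you suspicious; and the factor $p^{(r-1)/(p-1)}$ and the hypothesis $\rho<\rho_0$ do not ``absorb losses from lifting modular representatives'' — in the paper they arise from optimizing the intermediate inequality $\|P\|_1\le\rho^{-(r-1)}\max_i\bigl(\|\delta_i\|_1\rho^i/(|i!|_p|u^{p^n}-1|_p^i)\bigr)$ at $\rho=\rho_0$.
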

\begin{rem}
Si $Q_j$ est de la forme $Q^{(j)}$ pour un polynôme $Q$,
on a
$$\norm{\delta_i}{1}\leq \norm{Q}{1} \abs{u^{p^n}-1}{p}^i.$$
\end{rem}
\begin{proof}
Soit $S$ l'unique polynôme en $x$ et $y$ de degré $<r$ en $y$ et de degré $< p^n$
en $x$ tel que $S(x,u^{jp^n}-1) = Q_j(x)$ pour $0\leq j<r$.
Soit $Q(x,z)= \sum_{i=0}^{r-1}\delta_i(x)\binom{z}{i}$. On a
$Q(x,j)=Q_j(x)$ pour $0\leq j<r$ et
$$S(x,u^{jp^n}-1)=Q(x,j).$$
Comme $S(x,(1+x)^{p^n}-1)$ est de degré strictement inférieur à $rp^n$
et vérifie la condition demandée pour $P$, on a
$$P(x)=S(x,(1+x)^{p^n}-1).$$
Posons $S = \sum_{k=0}^{r-1} S_k(x) y^k$ vu comme polynôme en $y$.
On met sur l'espace vectoriel des polynômes en $y$ (ou en $z$)
à coefficients dans un espace muni d'une norme la norme obtenue en prenant le sup
de la norme des coefficients.
On a donc $\norm{S}{\rho}=\max_{k} \norm{S_k}{1} \rho^k$.
D'après \cite{av}, pour $\rho\leq \rho_0=p^{-1/(p-1)}$,
$z \mapsto v^z-1$ (avec $v={u^{p ^n}}$)
est un isomorphisme de la boule $\abs{z}{p} < \rho/\abs{v-1}{p}$
sur la boule $\abs{y}{p} < \rho$, l'isomorphisme réciproque étant
$ y \mapsto \frac{\log(1+y)}{\log v}$. Donc
$$\norm{S}{\rho} = \norm{ Q}{\rho/\abs{u^{p^n}-1}{p}}\ ,$$
d'où
$$\norm{S}{\rho}=\max_{0\leq k < r}(\norm{S_k}{1}\rho^k)
  = \max_{0\leq i <r}(\norm{\delta_i}{1}
\times \norm{\binom{\cdot}{i}}{\rho/\abs{u^{p^n}-1}{p}}).$$
Pour $\rho>\abs{u^{p^n}-1}{p}$, on a
$$\norm{\binom{\cdot}{i}}{\rho/\abs{u^{p^n}-1}{p}}=
\frac{(\rho/\abs{u^{p^n}-1}{p})^i}{\abs{i!}{p}}$$
et
$$\norm{S}{\rho}= \max_{0\leq i <r}\left(\norm{\delta_i}{1}
  \frac{(\rho/\abs{u^{p^n}-1}{p})^i}{\abs{i!}{p}}\right)
$$
D'où
\begin{equation*}
\norm{ P }{1} \leq \norm{ S }{1}\leq \rho^{-(r-1)}\max_{0\leq i<r}
\left(\frac{\norm{\delta_i }{1}}{\abs{u^{p^n}-1}{p}^i}
  \frac{\rho^i}{\abs{i!}{p}}\right)=
  \max_{0\leq i <r}\left(
\frac{\norm{\delta_i}{1}}{\abs{u^{p^n}-1}{p}^i}\frac{\rho^{i-r+1}}{\abs{i!}{p}}
\right)
\end{equation*}
En prenant $\rho=p^{-\frac{1}{p-1}}$, on en déduit la première inégalité de
\eqref{const:maj}. La deuxième inégalité se déduit de ce que
$$\ord_p(i!) = \sum_{s=1}^i\lfloor \frac{i}{p^s}\rfloor \leq
  \frac{i}{p} \frac{1-\frac{1}{p^i}}{1-\frac{1}{p}} \leq
\frac{i}{p-1}$$ pour $i\geq 0$
et donc que $\frac{p^{-\frac{i}{p-1}}}{\abs{i!}{p}}\leq 1$.
\end{proof}

\subsection{Construction de périodes logarithmiques dans $\cH$}
Nous introduisons dans ce paragraphe des éléments spéciaux de
$\cH$ que l'on peut voir comme une variante du produit de logarithmes.
Dans la suite,  $\JJ$ est un intervalle fini non vide de $\ZZ$.

On pose \index{$\bomega{n}{\JJ}$, $\mlog{\JJ}{n}$}
$\bomega{n}{\JJ}=\prod_{j\in J} \omega_n^{(j)}$\index{$\bomega{n}{\JJ}$, $\mlog{\JJ}{n}$},
$\mlog{\JJ}{n}=\prod_{j\in J} \frac{\xi_n^{(j)}}{p}$.
On a donc $\mloginf{\JJ}{0}=\prod_{n=0}^\infty \mlog{\JJ}{n}$.
Si $P$ et $Q$ sont deux polynômes premiers entre eux,
on désigne par $(P \bmod Q)^{-1}$ l'unique polynôme $R$ de degré strictement inférieur
au degré de $Q$ tel que $R P\equiv 1 \bmod Q$.
\begin{prop}\label{prop:existence}
Soit $\JJ$ un intervalle non vide de $\ZZ$ de cardinal $\lgg{\JJ}$.
Pour tout $n\geq 1$, soit
$\txin{n}{\JJ}$\index{$\txin{n}{\JJ}$} l'unique polynôme de $\Qp[x]$ de degré $< p^n\lgg{\JJ}$ tel que
$$\txin{n}{\JJ} \equiv \frac{\xi_n^{(j)}}{p} \bmod \omega_n^{(j)}$$
pour tout $j \in J$. Alors,
\begin{equation}\label{eq_xin}
\txin{n}{\JJ} =
(\mlog{\JJ}{n}\bmod \bomega{n-1}{\JJ})^{-1} \mlog{\JJ}{n}
\equiv \begin{cases}
0 & \bmod \bomega{n}{\JJ}/\bomega{n-1}{\JJ}\\
1 &\bmod \bomega{n-1}{\JJ}
\end{cases}
\end{equation}
et
$$p^{\lgg{\JJ}}\leq \norm{\txin{n}{\JJ}}{1} \leq p^{\lgg{\JJ}+ \borne{\lgg{\JJ}}}$$
avec $\borne{s} = \ord_p((s-1)!)$\index{$\borne{s}$}.
De plus, si $\borne{\lgg{\JJ}}=0$, $\txin{n}{\JJ}/\mlog{\JJ}{n}$ est une unité de $\Zp[[x]]$.
\end{prop}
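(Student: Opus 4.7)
My plan is to establish existence, uniqueness, and the explicit formula by a Chinese Remainder argument, then to apply Proposition~\ref{av} directly to $\txin{n}{\JJ}$ after computing the relevant finite differences, and finally to close the bounds by evaluating $\txin{n}{\JJ}/\mlog{\JJ}{n}$ at a point of $p\Zp$.

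The polynomials $\omega_n^{(j)}$ for $j\in\JJ$ are pairwise coprime, since $u^j\zeta=u^k\zeta'$ (with $\zeta,\zeta'$ of $p$-power order) would force $u^{j-k}$ to be simultaneously a principal unit and a root of unity, hence $j=k$. The CRT in $\Qp[x]$ then yields existence and uniqueness of $\txin{n}{\JJ}$. The same argument shows $\mlog{\JJ}{n}$ is coprime to $\bomega{n-1}{\JJ}$ in $\Qp[x]$ (roots of the factors $\xi_n^{(k)}$ are of order exactly $p^n$, while those of $\bomega{n-1}{\JJ}$ have order dividing $p^{n-1}$), so $R:=(\mlog{\JJ}{n}\bmod\bomega{n-1}{\JJ})^{-1}$ is well-defined and $P:=R\,\mlog{\JJ}{n}$ has degree strictly less than $p^n\lgg{\JJ}$. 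For each $j\in\JJ$, modulo $\omega_n^{(j)}=\omega_{n-1}^{(j)}\xi_n^{(j)}$ one has $P\equiv 0\bmod\xi_n^{(j)}$ (as $\xi_n^{(j)}/p$ divides $\mlog{\JJ}{n}$) and $P\equiv 1\bmod\omega_{n-1}^{(j)}$; the polynomial $\xi_n^{(j)}/p$ satisfies the same two residues via the identity $\xi_n\equiv p\bmod\omega_{n-1}$, so CRT uniqueness forces $P=\txin{n}{\JJ}$. The two congruences of \eqref{eq_xin} then follow from $P=R\,\mlog{\JJ}{n}$ together with $\bomega{n}{\JJ}/\bomega{n-1}{\JJ}=p^{\lgg{\JJ}}\mlog{\JJ}{n}$.

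For the upper bound, after a twist by $u^{-a}$ (a $\norm{\cdot}{1}$-isometry) I may assume $\JJ=\{0,\ldots,r-1\}$ with $r=\lgg{\JJ}$, and apply Proposition~\ref{av} with $Q_j=\xi_n^{(j)}/p$. A direct binomial manipulation yields
$$\delta_i=\frac{1}{p}\sum_{k=0}^{p-1}(1+x)^{kp^{n-1}}(u^{-kp^{n-1}}-1)^i,$$
so $\norm{\delta_0}{1}=p$ and, using $\ord_p(u^{-kp^{n-1}}-1)=n$ for $k\in\{1,\ldots,p-1\}$, $\norm{\delta_i}{1}\leq p^{1-ni}$ for $i\geq 1$. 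Substituting into the first inequality of \eqref{const:maj} with $\abs{u^{p^n}-1}{p}=p^{-(n+1)}$, the exponent of the $i$-th term inside the max simplifies to $1+i(p-2)/(p-1)+\ord_p(i!)$, which is non-decreasing in $i$ (using $p\geq 3$); the maximum is thus attained at $i=r-1$, and adding the prefactor $p^{(r-1)/(p-1)}$ collapses the total exponent exactly to $r+\borne{r}$. I expect this cancellation of the $n$-dependence and the clean telescoping to $i=r-1$ to be the main technical step.

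For the lower bound, I evaluate $R$ at $x=u^j-1$ for any $j\in\JJ$: since $\omega_{n-1}^{(j)}(u^j-1)=0$ and $R\,\mlog{\JJ}{n}\equiv 1\bmod\omega_{n-1}^{(j)}$, one has $R(u^j-1)=1/\mlog{\JJ}{n}(u^j-1)$. A direct computation gives $\xi_n^{(j)}(u^j-1)/p=1$ and, for $k\neq j$, $\xi_n^{(k)}(u^j-1)/p=(u^{(j-k)p^n}-1)/\bigl(p(u^{(j-k)p^{n-1}}-1)\bigr)$, which has $p$-adic valuation $0$; hence $R(u^j-1)\in\Zp^\times$. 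Since $u^j-1\in p\Zp$, the ultrametric inequality yields $\norm{R}{1}\geq\abs{R(u^j-1)}{p}=1$, and therefore $\norm{\txin{n}{\JJ}}{1}=\norm{R}{1}\cdot p^r\geq p^r$. When $\borne{r}=0$, the lower and upper bounds coincide, forcing $\norm{R}{1}=1$ and hence $R\in\Zp[x]$; then $R(0)\equiv R(u^j-1)\bmod p$ together with $R(u^j-1)\in\Zp^\times$ gives $R(0)\in\Zp^\times$, so $R=\txin{n}{\JJ}/\mlog{\JJ}{n}$ is a unit of $\Zp[[x]]$.
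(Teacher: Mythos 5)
Your proof is correct and follows essentially the same route as the paper's: existence by CRT, the identity \eqref{eq_xin} from the degree bound on $R\,\mlog{\JJ}{n}$, the upper bound by applying Proposition~\ref{av} with $Q_j=\xi_n^{(j)}/p$ after computing $\delta_i=p^{-1}\sum_{k}(u^{-kp^{n-1}}-1)^i(1+x)^{kp^{n-1}}$, and the lower bound by evaluating the quotient at $u^j-1$. The only local differences are cosmetic: you justify the identity \eqref{eq_xin} by checking residue conditions modulo $\xi_n^{(j)}$ and $\omega_{n-1}^{(j)}$ separately (the paper compares values at the $\lgg{\JJ}p^n$ roots directly, which is the same thing), and you spell out the monotonicity of the exponent $1+i\frac{p-2}{p-1}+\ord_p(i!)$ to locate the max at $i=r-1$, which the paper leaves implicit.
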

\begin{proof}
L'existence vient du lemme chinois.
L'identité \eqref{eq_xin} se déduit de ce que les valeurs
des polynômes $\txin{n}{\JJ}$ et $ (\mlog{\JJ}{n} \bmod {\bomega{n-1}{\JJ}})^{-1} \mlog{\JJ}{n}$
 en $\lgg{\JJ}p^n$ points sont les mêmes
(leurs valeurs en $u^{j}\zeta-1$ pour $\zeta$ d'ordre $p^m$
sont 0 pour $m= n$ et $1$ pour $m<n$) et que leurs degrés
sont tous deux strictement inférieurs à $p^n\lgg{\JJ}$ (le degré du deuxième
est strictement inférieur à $p^{n-1}\lgg{\JJ} + (p-1)p^{n-1}\lgg{\JJ}=p^n\lgg{\JJ}$).
Pour calculer $\norm{\txin{n}{\JJ}}{1}$,
supposons que  $\JJ=[0,r[$ (ce qui est possible par
translation) et appliquons la proposition \ref{av} aux polynômes
 $Q_j = \xi_n^{(j)}/p$ pour $j = 0, \dots , r-1$.
Pour $0\leq i <r$, on a
\begin{equation*}
\begin{split}
\delta_i(x) &=p^{-1}\sum_{j=0}^{i}\sum_{s=0}^{p-1} (-1)^{i-j}\binom{i}{j}
u^{-sjp^{n-1}} (1+x)^{sp^{n-1}}
\\&=p^{-1}\sum_{s=0}^{p-1}\sum_{j=0}^i (-1)^{i-j}\binom{i}{j}
u^{-sjp^{n-1}} (1+x)^{sp^{n-1}}
\\&= p^{-1}\sum_{s=0}^{p-1}(u^{-sp^{n-1}} -1)^i (1+x)^{sp^{n-1}}
.
 \end{split}
 \end{equation*}
Donc
\begin{equation*}
\norm{\delta_i}{1} \leq
p\max_{1\leq s \leq p-1}\abs{u^{-sp^{n-1}}-1}{p}^i
=\abs{u^{p^n}-1}{p}^ip^{1+i}.
\end{equation*}
On en déduit que
$$
  \norm{\txin{n}{\JJ} }{1} \leq
  p^{\frac{r-1}{p-1}} \max_{0\leq i<r}
\left( \frac{p^{1+i-\frac{i}{p-1}}}{\abs{i!}{p}}\right)
=
  p^{\frac{r-1}{p-1}}
\frac{p^{r-\frac{r-1}{p-1}}}{\abs{(r-1)!}{p}}= \frac{p^r}{\abs{(r-1)!}{p}}.
$$
Donc, $$
\norm{\txin{n}{\JJ} }{1} \leq p^{r+\ord_p((r-1)!}=p^{\lgg{\JJ}+\borne{\lgg{\JJ}}}.$$
Finalement, il est clair que
$\txin{n}{\JJ}$ est divisible par $\mlog{\JJ}{n}$
et que $\norm{\mlog{\JJ}{n}}{1}=p^{\lgg{\JJ}}$.
Pour tout entier $l$,
$\frac{\xi_n(u^{l} -1)}{p}$ est une unité $p$-adique
: pour $l=0$, $\frac{\xi_n(u^{l} -1)}{p}=1$, pour $l\neq 0$,
$\frac{\xi_n(u^{l} -1)}{p}=\frac{u^{lp^{n}} -1}{p(u^{lp^{n-1}} -1)}$ est de valuation
$p$-adique $0$. On en déduit que
pour $j\in J$ et $n\geq 1$,
la valeur de $\txin{n}{\JJ}/\mlog{\JJ}{n}$ en $u^j-1$ est une unité
et donc que
$\norm{\txin{n}{\JJ}/\mlog{\JJ}{n}}{1} \geq 1$.
Lorsque $\borne{\lgg{\JJ}}=0$, $\txin{n}{\JJ}/\mlog{\JJ}{n}$ appartient à
$\Zp[[x]]$ et en est une unité car une de ses valeurs dans le disque unité ouvert
est une unité.
\end{proof}
La proposition suivante généralise la proposition \ref{prop:existence}.
\begin{prop}
\label{const:norme}
Soit  $\Jp$ un sous-intervalle de  $\JJ$.
Soit $n$ un entier supérieur ou égal à 1.
Soit $\txin{n}{\inclus{ \Jp}{\JJ}}$\index{$\txin{n}{\inclus{ \Jp}{\JJ}}$} l'unique polynôme de $\Qp[x]$ de degré
strictement inférieur à
$((p-1) \lgg{ \Jp} + \lgg{\JJ})p^{n-1}$ vérifiant
$$\txin{n}{{\inclus{ \Jp}{\JJ}}} \equiv
\begin{cases}
\frac{\xi_n^{(j)}}{p} \bmod \omega_n^{(j)} &\text{pour $j \in  \Jp$}\\
1 \bmod \omega_{n-1}^{(j)} &\text{pour $j \in \JJ -  \Jp$}.
\end{cases}
$$
Alors, $$
\txin{n}{\inclus{ \Jp}{\JJ}} = (\mlog{ \Jp}{n} \bmod \bomega{n-1}{\JJ})^{-1} \mlog{ \Jp}{n}
\equiv\begin{cases}
0 &\bmod \bomega{n}{ \Jp}/\bomega{n-1}{\Jp}\\
1 &\bmod \bomega{n-1}{\JJ}
\end{cases}
$$
et $$p^{\lgg{\Jp}}\leq \norm{\txin{n}{\inclus{\Jp}{\JJ}}}{1} \leq p^{\lgg{\Jp}+\lfloor\frac{\lgg{\JJ}-1}{p-1} + \max(0, \frac{1}{p-1}+\ord_p((\lgg{\JJ}-1)!)
-\ord_p(u^{p^{n-1}}-1))\rfloor}.$$
En particulier, pour $n > \ord_p((\lgg{\JJ}-1)!) - (\ord_p(u-1)-1)$,
$$p^{\lgg{\Jp}}\leq \norm{\txin{n}{\inclus{\Jp}{\JJ}}}{1} \leq p^{\lgg{\Jp}+\tborne{\lgg{\JJ}}}$$
avec $\tborne{s}=\lfloor\frac{s-1}{p-1}\rfloor$.\index{$\tborne{s}$}
Si de plus $\tborne{\lgg{\JJ}}=0$,
  $\txin{n}{\inclus{\Jp}{\JJ}}/\mlog{\Jp}{n}$ est une unité de $\Zp[[x]]$.
\end{prop}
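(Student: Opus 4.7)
Existence and uniqueness of $\txin{n}{\inclus{\Jp}{\JJ}}$ follow from the Chinese remainder theorem applied to the pairwise coprime moduli $\omega_n^{(j)}$ for $j\in\Jp$ and $\omega_{n-1}^{(j')}$ for $j'\in\JJ\setminus\Jp$: their roots are shifted roots of unity of orders dividing $p^n$ and $p^{n-1}$ respectively, and for $j\neq j'$ they are disjoint since $u^{j-j'}$ is not a root of unity ($1+p\Zp$ being torsion-free). The sum of the degrees is exactly $((p-1)\lgg{\Jp}+\lgg{\JJ})p^{n-1}$, bounding the degree of the solution.

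To verify the closed form, I first note the congruence $\xi_n^{(j)}/p\equiv 1\pmod{\omega_{n-1}^{(j)}}$, a consequence of the identity $\xi_n(x)=\sum_{k=0}^{p-1}(1+x)^{kp^{n-1}}$ which specialises to $p$ at roots of $\omega_{n-1}$. By coprimality of $\xi_n^{(j)}$ and $\omega_{n-1}^{(j)}$, the condition $P\equiv\xi_n^{(j)}/p\pmod{\omega_n^{(j)}}$ for $j\in\Jp$ decomposes into $P\equiv 0\pmod{\xi_n^{(j)}}$ and $P\equiv 1\pmod{\omega_{n-1}^{(j)}}$. Combined with the conditions for $j\in\JJ\setminus\Jp$, this reduces the problem to finding $P=\mlog{\Jp}{n}\cdot Q$ with $\mlog{\Jp}{n}\,Q\equiv 1\pmod{\bomega{n-1}{\JJ}}$; the unique $Q$ of degree $<\lgg{\JJ}p^{n-1}$ is precisely the modular inverse appearing in the statement.

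For the upper bound I exploit multiplicativity of the Gauss norm: $\norm{\txin{n}{\inclus{\Jp}{\JJ}}}{1}=p^{\lgg{\Jp}}\cdot\norm{Q}{1}$, with $\norm{\mlog{\Jp}{n}}{1}=p^{\lgg{\Jp}}$ coming from $\norm{\xi_n^{(j)}/p}{1}=p$ (the leading coefficient of $\xi_n$ is $1$, the others lie in $\Zp$). To bound $\norm{Q}{1}$, after translating $\JJ$ to $[0,\lgg{\JJ}[$ I apply Proposition \ref{av} at level $n-1$ with $r=\lgg{\JJ}$, using the residues $Q_j=(\mlog{\Jp}{n})^{-1}\bmod\omega_{n-1}^{(j)}$ as input data. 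The main obstacle, and the technical heart of the proof, is the uniform estimation in $i$ of the differences $\delta_i=\sum_{j=0}^i(-1)^{i-j}\binom{i}{j}Q_j$: their $p$-adic sizes depend jointly on $\ord_p((\lgg{\JJ}-1)!)$ and on $\ord_p(u^{p^{n-1}}-1)$, and the $\max(0,\cdot)$ term in the stated bound captures the small-$n$ regime where the denominator $\abs{u^{p^{n-1}}-1}{p}^i$ in the Amice--V\'elu estimate fails to absorb the factorial; for $n$ sufficiently large the max vanishes and the bound collapses to $p^{\lgg{\Jp}+\tborne{\lgg{\JJ}}}$.

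The lower bound is obtained by evaluating at $x=u^{j_0}-1$ for any $j_0\in\JJ$: for every $l\in\ZZ$ the quantity $\xi_n(u^l-1)/p$ lies in $\Zp^\times$ (it equals $1$ when $l=0$, and $(u^{lp^n}-1)/(p(u^{lp^{n-1}}-1))$, of valuation $0$, otherwise), so $\mlog{\Jp}{n}(u^{j_0}-1)$ and hence $Q(u^{j_0}-1)=1/\mlog{\Jp}{n}(u^{j_0}-1)$ are units, giving $\norm{Q}{1}\geq 1$. Finally, when $\tborne{\lgg{\JJ}}=0$ the upper and lower bounds coincide, forcing $\norm{Q}{1}=1$ and therefore $Q\in\Zp[x]$; since $u^{j_0}-1\in p\Zp$, the congruence $Q(0)\equiv Q(u^{j_0}-1)\pmod{p}$ transfers the unit property to $Q(0)$, so $\txin{n}{\inclus{\Jp}{\JJ}}/\mlog{\Jp}{n}=Q$ is a unit of $\Zp[[x]]$.
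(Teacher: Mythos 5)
Your treatment of the existence and uniqueness (Chinese remainder, coprimality of $\xi_n^{(j)}$ and $\omega_{n-1}^{(j)}$, factorisation of the congruence conditions to obtain $\txin{n}{\inclus{\Jp}{\JJ}}=\mlog{\Jp}{n}\cdot Q$ with $Q$ the modular inverse), the lower bound via evaluation at $u^{j_0}-1$, and the final unit argument when $\tborne{\lgg{\JJ}}=0$ all match the paper and are correct. The problem is the upper bound, which is the only nontrivial estimate in the proposition, and you do not actually prove it: you write that ``the main obstacle, and the technical heart of the proof, is the uniform estimation in $i$ of the differences $\delta_i$'' and then describe, without computation, what the answer should look like. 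Naming the obstacle is not the same as overcoming it. Moreover, the route you propose is genuinely harder than it appears: Proposition~\ref{av} requires bounds on $\norm{\delta_i}{1}$, and the easy bound in the remark following it applies only when the $Q_j$ are translates $Q^{(j)}$ of a single polynomial. Here the $Q_j$ are the reductions of the fixed analytic function $(\mlog{\Jp}{n})^{-1}$ modulo the various $\omega_{n-1}^{(j)}$, which are not of that form, so you would have to estimate the finite differences $\delta_i$ from scratch using the analytic continuation of $(\mlog{\Jp}{n})^{-1}$ --- a nontrivial argument you leave entirely open.

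The paper takes a different and cleaner route precisely to avoid this. It first reduces to the case $n=1$ by observing $\txin{n,u}{\inclus{\Jp}{\JJ}}(x)=\txin{1,u^{p^{n-1}}}{\inclus{\Jp}{\JJ}}(\varphi^{n-1}x)$ and that $\varphi$ preserves the unit ball. Then $V=\txin{1}{\inclus{\Jp}{\JJ}}/\mlog{\Jp}{1}$ is the degree $<\lgg{\JJ}$ polynomial interpolating $f=1/\mlog{\Jp}{1}$ at the $\alpha_j=u^j-1$; writing $f=Q+x^r g$ and applying the explicit Lagrange formula for the interpolant of $f-Q$, together with the computation $\max_i\prod_{j\neq i}\abs{u-1}{p}/\abs{\alpha_i-\alpha_j}{p}=\abs{(r-1)!}{p}^{-1}$ and the estimate $\norm{f}{\rho_0}\leq 1$ (obtained from $\norm{1/\xi_1}{\rho'}=1$ for $\rho'<\rho_0$), it obtains the stated bound directly, with no recourse to Proposition~\ref{av} and no finite-difference estimate. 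Your proposal would need to supply the missing $\delta_i$ computation, and even then it is not clear it yields the precise constant in the statement.
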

\begin{rem}
\begin{enumerate}
\item Lorsque $\ord_p(u-1)=1$, la condition est que $n > \ord_p((\lgg{\JJ}-1)!)$.
\item
Si  $\Jp=\JJ$, $\txin{n}{\inclus{\JJ}{\JJ}}=\txin{n}{\JJ}$. Si  $\Jp=\emptyset$,
on a $\txin{n}{\inclus{\emptyset}{\JJ}}=1$.
\item Le degré de $\txin{n}{\inclus{\Jp}{\JJ}}$
est en fait inférieur ou égal à $((p-1)\lgg{\Jp} + \lgg{\JJ} -1)p^{n-1}$
pour $n\geq 1$.
\item On prend $\txin{0}{\inclus{\Jp}{\JJ}}=\mlog{\Jp}{0}$.
\end{enumerate}
\end{rem}
\begin{proof} Comme précédement, il est clair que
$\txin{n}{\inclus{\Jp}{\JJ}}$ est divisible par $\mlog{\Jp}{n}$
et que
$\norm{\txin{n}{\inclus{\Jp}{\JJ}}/\mlog{\Jp}{n}}{1}\geq p^{\lgg{\JJ}'}$.
Démontrons que pour $n\geq 1$,
\begin{equation}\label{majx}
\norm{\txin{n}{\inclus{\Jp}{\JJ}}/\mlog{\Jp}{n}}{1}\leq
p^{\frac{\lgg{\JJ}-1}{p-1}}\max(1,\frac{p^{\frac{1}{p-1}}\abs{u^{p^{n-1}}-1}{p}}{\abs{(\lgg{\JJ}-1)!}{p}}).
\end{equation}
Il suffit de regarder le cas où $n=1$ et $u$ quelconque congru à 1 modulo $p$. En effet,
$\txin{n}{\inclus{\Jp}{\JJ}}$ est obtenu en appliquant
$\varphi^{n-1} : x \mapsto (1+x)^{p^{n-1}} -1$
à $\txin{1}{\inclus{\Jp}{\JJ}}$ et en remplaçant $u$ par $u^{p^{n-1}}$:
$$\txin{n,u}{\inclus{\Jp}{\JJ}}(x)
=\txin{1,u^{p^{n-1}}}{\inclus{\Jp}{\JJ}}(\varphi^{n-1}(x)),\quad
\mlog{\Jp}{n,u}=\mlog{\Jp}{1,u^{p^{n-1}}}(\varphi^{n-1}x)$$
puisque $\varphi$ conserve la boule unité et on a donc
$$\norm{\txin{n,u}{\inclus{\Jp}{\JJ}}/\mlog{\Jp}{n,u}}{1}
=\norm{\txin{1,u^{p^{n-1}}}{\inclus{\Jp}{\JJ}}/\mlog{\Jp}{1,u^{p^{n-1}}}}{1}.$$
Posons $r=\lgg{\JJ}$. On peut supposer que  $\JJ=[0,\cdot\cdot,r[$
quitte à faire une translation.
Remarquons que $\mlog{\Jp}{1}$ est
inversible dans le disque $B(0, \rho)$ avec $\rho < \rho_0$
et que $\txin{1}{\inclus{\Jp}{\JJ}}/\mlog{\Jp}{1}$ est par définition le polynôme $V$ de
degré strictement inférieur à $\lgg{\JJ}$
vérifiant $V(u^j-1)= \mlog{\Jp}{1}(u^j-1)^{-1}$ pour $j\in J$.

La fonction $f=\frac{1}{\mlog{\Jp}{1}}=\sum_{m=0}^\infty a_m x^m$
est une fonction analytique sur $B(0,\rho_0)$.
Posons
$\alpha_i=u^j-1$ pour $j\in J$. Ce sont des éléments
de la boule $B(0,\abs{u-1}{p})$.
Le polynôme $V$ est le polynôme d'interpolation des $f(\alpha_i)$ en les $\alpha_i$.
Le rayon de convergence de $f$ étant $\rho_0$, on a
$$\abs{a_m}{p}\leq \rho_0^{-m} \norm{f}{\rho_0}.$$
Écrivons $f=Q+x^r g$ avec $Q$ polynôme de degré $\leq r-1$.
On a
$$
\abs{f(\alpha_i) - Q(\alpha_i)}{p}
\leq \max_{m\geq r} \abs{a_m}{p}\abs{u-1}{p}^m
\leq \left(\frac{\abs{u-1}{p}}{\rho_0}\right)^{r} \norm{f}{\rho_0}.
$$
Soit $P$ le polynôme interpolant les $f(\alpha_i) - Q(\alpha_i)$ en $\alpha_i$
pour $i=0,\cdots,r-1$. On a donc $V=P+Q$.
De la formule
$$
  P=\sum_{i=0}^{r-1} \left(f(\alpha_i) - Q(\alpha_i)\right)
  \prod_{j\neq i} \frac{x-\alpha_j}{\alpha_i-\alpha_j},
$$
on déduit que
$$\norm{P}{\rho_0}\leq \left(\frac{\abs{u-1}{p}}{\rho_0}\right)^r
  \left(\max_{0\leq i <r}\prod_{j\neq i} \frac{\rho_0} {\abs{\alpha_i-\alpha_j}{p}}\right)
  \norm{f}{\rho_0}
  =
  \frac{\abs{u-1}{p}}{\rho_0}
  \left(\max_{0\leq i <r} \prod_{j\neq i} \frac{\abs{u-1}{p}} {\abs{\alpha_i-\alpha_j}{p}}\right)
  \norm{f}{\rho_0}.
$$
Pour $i=0,\cdots,r-1$, on a
$$\prod_{j\neq i}\frac{\abs{u -1}{p}}{\abs{\alpha_i-\alpha_j}{p}}
  =\prod_{j\neq i}\frac{\abs{u -1}{p}}{\abs{u^{i-j} -1}{p}}
  =\prod_{j=0}^{i-1}p^{\ord_p(j-i)} \prod_{j=i+1}^{r-1}p^{\ord_p(j-i)}
  =(\abs{i!}{p}\abs{(r-1-i)!}{p})^{-1},
$$
donc
$$
\max_{0\leq i <r}\prod_{j\neq i}\frac{\abs{u -1}{p}}{\abs{\alpha_i-\alpha_j}{p}}
=\frac{1}{\abs{(r-1)!}{p}}.
$$
D'où,
$$\norm{P}{\rho_0}\leq
  \frac{\abs{u-1}{p}}{\rho_0}
  \frac{1}{\abs{(r-1)!}{p}}\norm{f}{\rho_0}=
  \frac{p^{\frac{1}{p-1}}\abs{u-1}{p}}{\abs{(r-1)!}{p}}\norm{f}{\rho_0}.
$$
D'autre part,
$$\norm{Q}{\rho_0}\leq \norm{f}{\rho_0}.$$
D'où
$$\norm{V}{\rho_0} \leq \max(\norm{Q}{\rho_0},\norm{P}{\rho_0})
=\max(1,p^{\frac{1}{p-1}}\frac{\abs{u-1}{p}}{\abs{(r-1)!}{p}})
\norm{f}{\rho_0}.
$$
Donc,
$$\norm{V}{1} \leq \frac{1}{\rho_0^{r-1}}\norm{V}{\rho_0}
=p^{\frac{r-1}{p-1}}\max(1,p^{\frac{1}{p-1}}\frac{\abs{u-1}{p}}{\abs{(r-1)!}{p}})
\norm{f}{\rho_0}.
$$
Montrons que $\norm{f}{\rho_0}\leq 1$.
On a $\xi_1=t(x)+\frac{x^{p-1}}{p}$ avec $t(x)\in 1+x\Zp[x]$.
Donc, pour $\rho'<\rho_0$,
$\norm{\frac{1}{\xi_1}}{\rho'}=\max_{n\geq 0}( p{\rho'}^{p-1})^n=1$.
On a donc $\norm{\frac{1}{\xi_1^{(j)}}}{\rho'}=1$
et $\norm{\frac{1}{\mlog{\Jp}{1}}}{\rho'}= 1$.
En faisant tendre $\rho'$ vers $\rho_0$, on a donc
$\norm{\frac{1}{\mlog{\Jp}{1}}}{\rho_0}\leq 1$. D'où l'inégalité
\eqref{majx}.
\end{proof}
\begin{cor}\label{aunitepres}
Lorsque $\lgg{\JJ} < p$,
  $\txin{n}{\inclus{\Jp}{\JJ}}$ est égal à $\mlog{\Jp}{n}$ à une unité près de $\Zp[[x]]$.
\end{cor}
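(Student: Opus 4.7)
The plan is to deduce this corollary directly from the last assertion of Proposition~\ref{const:norme}, by verifying that both hypotheses ($\tborne{\lgg{\JJ}}=0$ and the lower bound on $n$) hold automatically under the assumption $\lgg{\JJ} < p$.

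First I would note that $\tborne{s}=\lfloor\frac{s-1}{p-1}\rfloor$ vanishes exactly when $s\leq p-1$, so the hypothesis $\lgg{\JJ}<p$ yields immediately $\tborne{\lgg{\JJ}}=0$. This is the combinatorial input that makes the bound in Proposition~\ref{const:norme} tight up to a unit.

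Next I would check that the restriction $n>\ord_p((\lgg{\JJ}-1)!)-(\ord_p(u-1)-1)$ imposed in the proposition is vacuous here. Since $\lgg{\JJ}-1\leq p-2$, the factorial $(\lgg{\JJ}-1)!$ is a $p$-adic unit, so $\ord_p((\lgg{\JJ}-1)!)=0$. Because $u\equiv 1\bmod p$, one has $\ord_p(u-1)\geq 1$. Hence the required inequality reduces to $n>1-\ord_p(u-1)\leq 0$, which is satisfied by every $n\geq 1$. Combining with the divisibility of $\txin{n}{\inclus{\Jp}{\JJ}}$ by $\mlog{\Jp}{n}$ established at the start of the proof of Proposition~\ref{const:norme}, the last sentence of that proposition then gives that $\txin{n}{\inclus{\Jp}{\JJ}}/\mlog{\Jp}{n}$ is a unit of $\Zp[[x]]$ for all $n\geq 1$.

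Finally, the case $n=0$ is handled by the convention recorded in the remark following Proposition~\ref{const:norme}, namely $\txin{0}{\inclus{\Jp}{\JJ}}=\mlog{\Jp}{0}$, so the quotient is trivially the unit $1$. There is no real obstacle: the entire argument is a bookkeeping check that the numerical hypotheses of the preceding proposition simplify under $\lgg{\JJ}<p$, and the only subtlety is remembering to treat the boundary value $n=0$ separately by invoking the convention.
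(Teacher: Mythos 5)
Your proposal is correct and matches exactly the argument the paper leaves implicit: the corollary carries no written proof, being a direct specialization of the last sentence of Proposition~\ref{const:norme}. You correctly observe that $\lgg{\JJ}<p$ forces $\tborne{\lgg{\JJ}}=0$, that it also makes $(\lgg{\JJ}-1)!$ a $p$-adic unit so the threshold $n>\ord_p((\lgg{\JJ}-1)!)-(\ord_p(u-1)-1)$ is satisfied by every $n\geq 1$, and that $n=0$ is covered by the convention $\txin{0}{\inclus{\Jp}{\JJ}}=\mlog{\Jp}{0}$.
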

Pour $u$ suffisament congru à 1 modulo $p$, plus précisément si
$$\abs{u-1}{p}\leq p^{-1/(p-1)}\abs{(\lgg{\JJ}-1)!}{p},$$
on a donc
$$
\norm{\txin{1}{\inclus{\Jp}{\JJ}}/\mlog{\Jp}{1}}{1} \leq p^{\lfloor\frac{\lgg{\JJ}-1}{p-1}\rfloor}
$$
Pour $\abs{u-1}{p} > p^{-1/(p-1)}\abs{(\lgg{\JJ}-1)!}{p}$,
$$\norm{\txin{1}{\inclus{\Jp}{\JJ}}/\mlog{\Jp}{1}}{1} \leq
  p^{\frac{\lgg{\JJ}-1}{p-1}}\frac{p^{-\frac{1}{p-1}}\abs{u-1}{p}}{\abs{(\lgg{\JJ}-1)!}{p}}
  \leq
  \frac{p^{\frac{\lgg{\JJ}-p-1}{p-1}}}{\abs{(\lgg{\JJ}-1)!}{p}}.
$$
Par définition, $\txin{n}{\inclus{\Jp}{\JJ}}(u^{j}\zeta-1)$ est nul pour $j \in \Jp$
et toute racine de l'unité $\zeta$ d'ordre $p^n$ et non nul pour $j \in J$
et toute racine de l'unité $\zeta$ d'ordre divisant $p^{n-1}$.
\begin{lem} Si $m > n + \frac{\log(\lfloor\frac{\lgg{\JJ}-1}{p-1} \rfloor)}{\log p}$,
$\txin{n}{\inclus{\Jp}{\JJ}}$ ne s'annule pas en $u^j\zeta-1$ pour $\zeta$ d'ordre
$p^m$.
Par exemple, si $\lgg{\JJ} < p(p-1)$ et $m\geq n$,
$\txin{n}{\inclus{\Jp}{\JJ}}$ ne s'annule pas en
d'autres $u^j\zeta_m-1$ que ceux venant de sa définition.
\end{lem}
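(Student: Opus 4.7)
The plan is to exploit the factorization $\txin{n}{\inclus{\Jp}{\JJ}} = V \cdot \mlog{\Jp}{n}$, where, by the divisibility statement in Proposition~\ref{const:norme}, the quotient $V := \txin{n}{\inclus{\Jp}{\JJ}}/\mlog{\Jp}{n}$ is a genuine polynomial of $\Qp[x]$, and then to apply a Galois-theoretic degree argument to $V$.

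First I would reduce the problem to the non-vanishing of $V$. The zeros of $\mlog{\Jp}{n}$ are exactly the points $u^{j'}\zeta'-1$ with $j'\in\Jp$ and $\zeta'$ of exact order $p^n$. As soon as the hypothesis forces $m\geq n+1$ (which is the case when $\tborne{\lgg{\JJ}}\geq 1$), the factor $\mlog{\Jp}{n}$ does not vanish at any $u^j\zeta-1$ with $\zeta$ of order $p^m$, and it suffices to show $V(u^j\zeta-1)\neq 0$. Next I would control the degree of $V$: by the third remark following Proposition~\ref{const:norme}, $\deg\txin{n}{\inclus{\Jp}{\JJ}}\leq((p-1)\lgg{\Jp}+\lgg{\JJ}-1)p^{n-1}$, while $\deg\mlog{\Jp}{n}=(p-1)\lgg{\Jp}\,p^{n-1}$, hence
\[\deg V\leq(\lgg{\JJ}-1)p^{n-1}.\]

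Finally I would invoke the degrees of cyclotomic extensions. Setting $\alpha=u^j\zeta-1$ and using $u\in\Qp^{\times}$, we have $\Qp(\alpha)=\Qp(\zeta)$ of degree $\phi(p^m)=(p-1)p^{m-1}$ over $\Qp$; so a vanishing $V(\alpha)=0$ with $V\in\Qp[x]$ would force the minimal polynomial of $\alpha$, of degree $(p-1)p^{m-1}$, to divide $V$, giving
\[(p-1)p^{m-1}\leq(\lgg{\JJ}-1)p^{n-1},\quad\text{i.e.}\quad p^{m-n}\leq\frac{\lgg{\JJ}-1}{p-1}.\]
Since $p^{m-n}$ is a positive integer, this implies $p^{m-n}\leq\tborne{\lgg{\JJ}}$, contradicting the hypothesis $m>n+\log_p\tborne{\lgg{\JJ}}$.

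For the example, $\lgg{\JJ}<p(p-1)$ gives $\tborne{\lgg{\JJ}}\leq p-1<p$, so the main argument applies as soon as $m\geq n+1$, and the case $m=n$ merely records that the order-$p^n$ zeros of $\txin{n}{\inclus{\Jp}{\JJ}}$ are exactly the definitional ones from $\mlog{\Jp}{n}$, indexed by $j'\in\Jp$. The main technical point, should it become delicate, is checking that $V$ really lies in $\Qp[x]$ and satisfies the sharper degree bound of Remark~3; once these are on the table the argument reduces to an elementary comparison of degrees and there is no analytic estimate to carry out.
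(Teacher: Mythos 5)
Your argument is the same as the paper's: factor $\txin{n}{\inclus{\Jp}{\JJ}}=V\cdot\mlog{\Jp}{n}$, bound $\deg V\leq(\lgg{\JJ}-1)p^{n-1}$ using the remark after Proposition~\ref{const:norme}, and compare with the degree $(p-1)p^{m-1}$ of the minimal polynomial of $u^j\zeta_m-1$ over $\Qp$ to obtain $p^{m-n}\leq\lfloor\frac{\lgg{\JJ}-1}{p-1}\rfloor$, contradicting the hypothesis; the paper states this in a single sentence and you have simply spelled out the same steps.

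One caveat on your closing remark about the example: the degree argument only excludes zeros at level $p^m$ for $m>n$; for $m=n$ and $j\in\JJ\setminus\Jp$ it would give $(p-1)p^{n-1}\leq(\lgg{\JJ}-1)p^{n-1}$, i.e.\ $\lgg{\JJ}\geq p$, which is allowed under the hypothesis $\lgg{\JJ}<p(p-1)$. So the statement that the order-$p^n$ zeros are exactly the definitional ones does not "merely record" anything when $\lgg{\JJ}\geq p$ — it needs a separate argument, which neither you nor the paper's own proof supplies (the paper's proof also treats only $m>n$). This is a gap in the paper's example statement rather than a deficiency of your write-up relative to it.
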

\begin{proof}
Si $\txin{n}{\inclus{\Jp}{\JJ}}/\mlog{\Jp}{n}$ s'annule en
$u^j\zeta-1$ pour $\zeta$ d'ordre $p^m$ et $m>n$, son degré
doit être supérieur ou égal à $p^{m-1}(p-1)$, ce qui implique
que $p^{m-n} \leq \lfloor\frac{\lgg{\JJ}-1}{p-1} \rfloor$.
\end{proof}

\begin{lem}\label{lem:mu}
Pour $m > n$, on a
$$\abs{p^{\lgg{\Jp}}\txin{n}{\inclus{\Jp}{\JJ}}(u^k \zeta -1)}{p}\leq
p^{-\frac{\lgg{\Jp}}{p^{m-n}}+\tborne{\lgg{\JJ}}}$$
pour $\zeta$ racine de l'unité d'ordre $p^m$
avec égalité si $\tborne{\lgg{\JJ}}=0$.
\end{lem}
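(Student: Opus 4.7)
Le principe de la preuve est de factoriser $\txin{n}{\inclus{\Jp}{\JJ}} = R\cdot\mlog{\Jp}{n}$ avec $R = (\mlog{\Jp}{n}\bmod\bomega{n-1}{\JJ})^{-1}$ (expression donn�e dans la Proposition \ref{const:norme}), puis d'estimer s�par�ment les deux facteurs au point $u^k\zeta-1$. Le facteur analytique $\mlog{\Jp}{n}$ se contr�le par un calcul exact de valuation, tandis que le facteur polynomial $R$ se contr�le par la borne de Gauss d�j� �tablie.

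Pour $j\in \Jp$, l'identit�
$$\xi_n^{(j)}(u^k\zeta-1) = \frac{u^{(k-j)p^n}\zeta^{p^n}-1}{u^{(k-j)p^{n-1}}\zeta^{p^{n-1}}-1}$$
permet un calcul direct. En effet $\zeta^{p^n}$ est une racine primitive $p^{m-n}$-i�me de l'unit� (car $m>n$), donc $\ord_p(\zeta^{p^n}-1) = \frac{1}{p^{m-n-1}(p-1)}$, qui est strictement inf�rieur � $\ord_p(u^{(k-j)p^n}-1)\geq n+1$. La valuation du num�rateur vaut donc $\frac{1}{p^{m-n-1}(p-1)}$, et celle du d�nominateur $\frac{1}{p^{m-n}(p-1)}$ par le m�me raisonnement appliqu� � $\zeta^{p^{n-1}}$. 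Il en r�sulte $\ord_p(\xi_n^{(j)}(u^k\zeta-1)/p)=\frac{1}{p^{m-n}}-1$, et en prenant le produit sur les $\lgg{\Jp}$ indices,
$$\ord_p\bigl(p^{\lgg{\Jp}}\mlog{\Jp}{n}(u^k\zeta-1)\bigr) = \frac{\lgg{\Jp}}{p^{m-n}}.$$

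Quant au facteur $R$, la Proposition \ref{const:norme} fournit (dans le r�gime requis sur $n$, automatiquement satisfait sous nos hypoth�ses) la majoration $\norm{R}{1}\leq p^{\tborne{\lgg{\JJ}}}$, d'o� $\abs{R(u^k\zeta-1)}{p}\leq p^{\tborne{\lgg{\JJ}}}$ puisque $\abs{u^k\zeta-1}{p}\leq 1$. En multipliant, on obtient la majoration annonc�e. Dans le cas $\tborne{\lgg{\JJ}}=0$, le Corollaire \ref{aunitepres} assure que $R$ est une unit� de $\Zp[[x]]$; comme $u^k\zeta-1$ est dans l'id�al maximal, on a $\abs{R(u^k\zeta-1)}{p}=1$ et l'�galit� s'ensuit. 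Le seul point techniquement d�licat est le calcul exact des valuations des num�rateurs et d�nominateurs, qui repose sur la formule classique $\ord_p(\eta-1)=\frac{1}{p^{a-1}(p-1)}$ pour $\eta$ racine primitive $p^a$-i�me de l'unit�, et sur le caract�re n�gligeable des perturbations provenant des puissances de $u$; le reste se r�duit � extraire les bornes d�j� d�montr�es.
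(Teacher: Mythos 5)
Your argument is correct and follows essentially the same route as the paper: write $\txin{n}{\inclus{\Jp}{\JJ}} = \mlog{\Jp}{n}\cdot V$ with $V$ the quotient controlled by Proposition~\ref{const:norme}, compute the exact valuation of $p^{\lgg{\Jp}}\mlog{\Jp}{n}(u^k\zeta-1) = \prod_{j\in\Jp}\xi_n(u^{k-j}\zeta-1)$ term by term, and bound the remaining factor by $\norm{V}{1}\le p^{\tborne{\lgg{\JJ}}}$ (with $V$ a unit of $\Zp[[x]]$ when $\tborne{\lgg{\JJ}}=0$, giving equality). Your valuation computation is spelled out a bit more explicitly than the paper's, but the decomposition, the key estimates, and the handling of the equality case are identical.
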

\begin{proof}
On écrit $\txin{n}{\inclus{\Jp}{\JJ}}= \mlog{\Jp}{n} V$
où $\norm{V}{1}\leq p^{\tborne{\lgg{\JJ}}}$.
Calculons la valeur absolue $p$-adique de $p^{\lgg{\Jp}}\mlog{\Jp}{n}(u^k\zeta-1)=
\prod_{j\in \Jp}\xi_n(u^{k-j} \zeta -1)
$. On a
$$\xi_n(u^{s} \zeta -1)=\frac{u^{s p^n}\zeta^{p^n}-1}{u^{s p^{n-1}}\zeta^{p^{n-1}}-1}
$$
et
\begin{equation*}
\begin{split}
\ord_p(u^{s p^n}\zeta^{p^n}-1)&= \min(\ord_p(u^{s p^n}-1), \ord_p(\zeta^{p^n}-1))\\
&= \min (\ord_p(s+n+1), \frac{1}{(p-1)p^{m-n-1}})=
\frac{1}{(p-1)p^{m-n-1}} .
\end{split}
\end{equation*}
Donc,
$ \ord_p(\xi_n(u^{s} \zeta -1))
=\frac{1}{p^{m-n}}
$
et
$\abs{p^{\lgg{\Jp}}\mlog{\Jp}{n}(u^k\zeta-1)}{p}=p^{-\frac{\lgg{\Jp}}{p^{m-n}}}$.
Si $\tborne{\lgg{\JJ}}=0$, $V$ est une unité et $\norm{V}{1}=1$.
Ce qui termine la démonstration du lemme.
\end{proof}
Soit $\lambda(Q)$ (resp. $\mu(Q)$) le $\lambda$-invariant (resp. $\mu$-invariant) $p$-adique
d'un polynôme $Q$ de $\Qp[x]$.
Rappelons que $\mu(\mlog{\JJ}{n})=-\lgg{\JJ}$. On voit facilement que
\begin{equation*}
\begin{cases}
\deg(\txin{n}{\JJ}/\mlog{\JJ}{n})&\leq p^{n-1}(\lgg{\JJ}-1)\\
\mu(\txin{n}{\JJ}/\mlog{\JJ}{n})&\geq -\tborne{\lgg{\JJ}}.
\end{cases}
\end{equation*}
\textbf{Expérimentalement}, on a
\begin{equation*}
\begin{cases}
\deg(\txin{n}{\JJ}/\mlog{\JJ}{n})&=p^{n-1}(\lgg{\JJ}-1)\\
\mu(\txin{n}{\JJ}/\mlog{\JJ}{n})&=-\tborne{\lgg{\JJ}}\\
\lambda(\txin{n}{\JJ}/\mlog{\JJ}{n})&= (p-1) p^{n-1}\lfloor \frac{\lgg{\JJ}-2}{p-1} \rfloor.
\end{cases}
\end{equation*}
\subsection{Produit infini}
On pose toujours $\rho_0=p^{-1/(p-1)}$. Pour $0 < \rho <1$, on note $n_0(\rho)$
le plus petit entier $\geq 0$ tel que $\rho^{p^{n_0(\rho)}} \leq \rho_0$.
Ainsi, $n_0(\rho_0)$ est nul.
\begin{prop}\label{convergence} Soient $r$ un entier strictement positif,
$\mu$ et $\alpha_j$ des entiers positifs pour $0 \leq j < r$.
Soit $H$ un polynôme divisible par $\prod_{j=0}^{r-1} (\omega_{n-1}^{(j)})^{\alpha_j}$.
Alors, pour tout réel $ \rho$ tel que $0 < \rho <1$,
\begin{equation*}
\norm{H}{\rho} \leq
\begin{cases}
  (p\rho_0)^{\lambda}p^{-(n-n_0(\rho))\lambda}\norm{H}{1}
 &\text{si $n\geq n_0(\rho)$}\\
 (\rho^{p^n})^{\lambda}\norm{H}{1} &\text{si $n\leq n_0(\rho)$}
\end{cases}
\end{equation*}
avec
$\lambda = \sum_{j=0}^{r-1} \alpha_j$.
\end{prop}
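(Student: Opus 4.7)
The idea is to reduce the proposition to a per-factor Newton polygon estimate via multiplicativity of the Gauss norm on $\Qp[x]$. The plan is to factor $H = \prod_{j=0}^{r-1}(\omega_{n-1}^{(j)})^{\alpha_j}\cdot G$ for some polynomial $G\in \Qp[x]$, then use (i) multiplicativity of $\norm{\cdot}{\rho}$ on $\Qp[x]$, (ii) the trivial inequality $\norm{G}{\rho}\leq \norm{G}{1}$ for $\rho\leq 1$ (obvious coefficient by coefficient), and (iii) the fact that $\norm{\omega_{n-1}^{(j)}}{1}=1$, since the leading coefficient $u^{-jp^{n-1}}$ is a $p$-adic unit and all other coefficients have $p$-adic absolute value at most $1$. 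These three observations give
$$\frac{\norm{H}{\rho}}{\norm{H}{1}} \;\leq\; \prod_{j=0}^{r-1}\norm{\omega_{n-1}^{(j)}}{\rho}^{\alpha_j},$$
so the statement reduces to a uniform single-factor bound.

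To obtain this single-factor bound I would compute the Newton polygon of $\omega_{n-1}^{(j)}(x)=u^{-jp^{n-1}}(1+x)^{p^{n-1}}-1$. By Kummer's theorem $\ord_p\binom{p^{n-1}}{p^i}=n-1-i$ for $0\leq i\leq n-1$, so the relevant vertices are $(p^i,n-1-i)$, $i=0,\dots,n-1$; the constant term (of valuation $\geq n$ when $j\neq 0$, zero when $j=0$) lies off the polygon in the range of $\rho$ that concerns us. Writing $t=-\log_p\rho$, the Gauss norm is the support function of the polygon, with slope-change thresholds $t_i=1/(p^i(p-1))$; these are precisely the thresholds defining $n_0(\rho)$, since $n_0(\rho)=i$ iff $t\in(t_i,t_{i-1}]$. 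A case analysis on the dominant vertex then yields
$$\norm{\omega_{n-1}^{(j)}}{\rho} \;=\; \begin{cases}p^{-(n-1-n_0(\rho))}\rho^{p^{n_0(\rho)}} & \text{if } n_0(\rho)\leq n-1,\\ \rho^{p^{n-1}} & \text{if } n_0(\rho)\geq n-1,\end{cases}$$
and raising each such factor to $\alpha_j$ and taking the product over $j$ against $\norm{H}{1}$ would yield the two bounds of the statement.

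The main obstacle will be the careful bookkeeping at the boundary $n=n_0(\rho)$, where the dominant vertex switches between $(p^{n_0},n-1-n_0)$ and the rightmost vertex $(p^{n-1},0)$; this is where the factor of $p$ in $p\rho_0$ enters to absorb the slack of the first bound. A secondary check is required to reconcile the per-factor exponent $p^{n-1}$ from the rightmost vertex with the claimed exponent $p^n$ in the regime $n\leq n_0(\rho)$, and I would revisit the indexing conventions of the preceding constructions to settle this point before assembling the final proof.
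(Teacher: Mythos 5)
Your approach is essentially the one the paper follows: both factor out $\Phi := \prod_{j=0}^{r-1}(\omega_{n-1}^{(j)})^{\alpha_j}$ from $H$ and reduce to a per-factor estimate of $\norm{\omega_{n-1}^{(j)}}{\rho}$. You invoke multiplicativity of the Gauss norm $\norm{\cdot}{\rho}$ to get $\norm{H/\Phi}{1}=\norm{H}{1}$ (since each $\omega_{n-1}^{(j)}$ has unit Gauss norm at $\rho=1$); the paper reaches the same conclusion via division by a polynomial with unit leading coefficient. For the per-factor estimate, the paper iterates the substitution $\varphi\colon x\mapsto(1+x)^p-1$ using $\norm{\varphi(P)}{\rho}\leq\norm{P}{\rho/p}$ for $\rho\leq\rho_0$ and $\norm{\varphi(P)}{\rho}\leq\norm{P}{\rho^p}$ for $\rho\geq\rho_0$; that is just a recursive way of walking along the Newton polygon whose vertices $(p^i,n-1-i)$ you have written down. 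So there is no genuinely different route here, only a more explicit presentation of the same calculation.

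Two concrete points need attention before this closes. First, the assertion that the constant term ``lies off the polygon in the range of $\rho$ that concerns us'' is not correct. For $j\neq 0$ and $\rho$ small enough (roughly $\rho<p^{-\ord_p(u-1)-\ord_p(j)}$, so certainly in the range of the proposition once $n_0(\rho)=0$), one has $\norm{\omega_{n-1}^{(j)}}{\rho}=\abs{u^{-jp^{n-1}}-1}{p}$, which strictly exceeds the vertex value $p^{-(n-1)}\rho$; your displayed ``$=$'' is then a strict ``$>$''. Since you need an upper bound on $\norm{H}{\rho}$, a formula that undershoots the true norm cannot by itself establish the claim. The repair is easy: $\abs{u^{-jp^{n-1}}-1}{p}\leq p^{-n}\leq(p\rho_0)\,p^{-(n-n_0(\rho))}$ because $p\rho_0=p^{(p-2)/(p-1)}\geq 1$, so the constant term must simply be carried as a separate case in the $\max$ — exactly what the paper does by keeping $\max\bigl(\abs{u^{-jp^{n-1}}-1}{p},\,p^{-n+1+n_0(\rho)}\rho_0\bigr)$. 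Second, your suspicion about the exponent $p^n$ in the regime $n\leq n_0(\rho)$ is well-founded: the correct per-factor value at the rightmost vertex is $\rho^{p^{n-1}}$, not $\rho^{p^n}$, and since $\rho<1$ the stated bound $(\rho^{p^n})^\lambda\norm{H}{1}$ is actually smaller than what the argument yields. The same off-by-one appears in the paper's own proof. It is harmless for the rest of the paper, since the only use of this case (in the proof of Proposition~\ref{prop:majoration}) is the weaker inequality $\norm{H}{\rho}\leq\norm{H}{1}$; but if you are stating the proposition precisely, replace $\rho^{p^n}$ by $\rho^{p^{n-1}}$.
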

\begin{proof}
On a $$H = \prod_{j=0}^{r-1} (u^{-jp^{n-1}}(1+x)^{p^{n-1}}-1)^{\alpha_j} \times W$$
avec $W \in \Qp[x]$. Par
le théorème de la division euclidienne par un polynôme unitaire de $\Zp[x]$,
$\norm{W}{1}$ est inférieur à $\norm{H}{1}$. Donc,
\begin{equation*}
\norm{ H}{\rho} \leq
\prod_{j=0}^{r-1} \norm{u^{-jp^{n-1}}(1+x)^{p^{n-1}}-1}{\rho}^{\alpha_j}\norm{H}{1}.
\end{equation*}
Posons $\varphi(P)=P((1+x)^p -1)$ pour $P$ polynôme en $x$.
On a
\begin{equation*}
\norm{\varphi(P)}{\rho} \leq \begin{cases}
\norm{P}{\rho/p} &\text{ si $\rho \leq p^{-1/(p-1)}$}\\
\norm{P}{\rho^p} &\text{ si $\rho \geq p^{-1/(p-1)}$}.
\end{cases}
\end{equation*}
Supposons d'abord $n \geq n_0(\rho)$. Alors,
\begin{equation*}
\begin{split}
\norm{\varphi^{n-1}
(u^{-jp^{n-1}}(1+x)-1)}{\rho}&\leq
  \norm{u^{-jp^{n-1}}(1+x)-1}{\rho^{p^{n_0(\rho)}}/p^{n-1-n_0(\rho)}} \\
  &\leq
  \norm{u^{-jp^{n-1}}(1+x)-1}{\rho_0/p^{n-1-n_0(\rho)}}
  \\
  &\leq \max(\abs{u^{-jp^{n-1}}-1}{p},p^{-n+1+n_0(\rho)}\rho_0).
\end{split}
\end{equation*}
Si $j\neq 0$,
\begin{equation*}
\begin{split}
\norm{\varphi^{n-1}
(u^{-jp^{n-1}}(1+x)-1)}{\rho}
&\leq \max(\abs{u^{p^{n-1+ \ord_p(j)}}-1}{p},p^{-n+1+n_0(\rho)}\rho_0)\\
&\leq p^{-(n-1)}
\max(\abs{u-1}{p}p^{-\ord_p(j)},p^{n_0(\rho)}\rho_0)\\
&\leq p^{-(n-1)}
\max(\abs{u-1}{p},p^{n_0(\rho)}\rho_0) .
\end{split}
\end{equation*}
Donc, pour $n \geq n_0(\rho)$,
\begin{equation*}
\begin{split}
\norm{ H }{\rho}
&\leq p^{-(n-1)\lambda}\max(\abs{u-1}{p},p^{n_0(\rho)}\rho_0)^{\lambda}\norm{H}{1}\\
&\leq (p\rho_0)^{\lambda}
  p^{-(n-n_0(\rho))\lambda}\norm{H}{1}.
\end{split}
\end{equation*}
Pour $0 < n < n_0(\rho)$,
\begin{equation*}
\begin{split}
\norm{\varphi^{n-1}(u^{-jp^{n-1}}(1+x)-1}{\rho}
&\leq \norm{u^{-jp^{n-1}}(1+x)-1}{\rho^{p^n}}\\
&\leq\max(\abs{u^{p^{n-1 + \ord_p(j)}}-1}{p},\rho^{p^n}) \leq \rho^{p^n}
\end{split}
\end{equation*}
car $\abs{u^{p^{n-1 + \ord_p(j)}}-1}{p} \leq p^{-1} < p^{-1/(p-1)} < \rho^{p^n}<1$.
Donc, pour $0 < n < n_0(\rho)$,
\begin{equation*}
\norm{ H }{\rho}\leq (\rho^{p^n})^\lambda \norm{H}{1}
\leq \norm{H}{1}.
\end{equation*}
\end{proof}
\begin{defn}
Soient $\lambda >0$, $\mu \geq 0$.
On dit qu'un produit infini $\prod_n G_n$ est \textsl{de type
($\lambda$, $\mu$)} si
\begin{enumerate}
  \item $ \norm{G_n}{1} \leq p^\mu\ ;$
  \item il existe une constante $\nu$ telle que pour tout réel $ \rho$, $0 < \rho <1$,
  \begin{equation}\label{type}
\norm{G_n - 1}{\rho} \leq p^\nu p^{-\lambda(n -n_0(\rho)) }
\quad \text{si $n\geq n_0(\rho)$}.
\end{equation}
\end{enumerate}
\end{defn}
\begin{prop}
\label{produit}
Un produit $ \prod_n G_n$ de type ($\lambda$, $\mu$) avec $\lambda >0$, $\mu \geq 0$
définit un élément $G$ de
$\cH$ d'ordre inférieur ou égal à $\mu$.
\end{prop}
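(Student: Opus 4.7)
The plan is to prove convergence of the partial products $H_N=\prod_{n=0}^{N} G_n$ in the norm $\norm{\cdot}{\rho}$ for every $\rho<1$ and, simultaneously, to bound $\norm{G}{\rho}$ so as to exhibit $G$ as an element of $\cH$ of order at most $\mu$.

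Fix $\rho\in(0,1)$. The key is to split the product at the index $n_0(\rho)$: the low range $n<n_0(\rho)$ is where only the trivial bound from hypothesis (1) is available, while the high range $n\geq n_0(\rho)$ is where the geometric decay from hypothesis (2) takes effect. For $n<n_0(\rho)$ one uses
$$\norm{G_n}{\rho}\leq \norm{G_n}{1}\leq p^{\mu},$$
giving a total contribution bounded by $p^{\mu\,n_0(\rho)}$. For $n\geq n_0(\rho)$, hypothesis \eqref{type} gives
$$\norm{G_n}{\rho}\leq \max\bigl(1,\norm{G_n-1}{\rho}\bigr)\leq p^{\max(0,\,\nu-\lambda(n-n_0(\rho)))},$$
whose logarithm summed over $n$ is bounded by $C=\sum_{k\geq 0}\max(0,\nu-\lambda k)$, a finite constant precisely because $\lambda>0$. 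Applying these bounds to the partial product $H_N$ yields
$$\norm{H_N}{\rho}\leq p^{C+\mu\,n_0(\rho)}$$
uniformly in $N$.

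With this uniform bound in hand, the Cauchy property of $(H_N)$ in $\norm{\cdot}{\rho}$ follows from the telescoping identity $H_{N+1}-H_N=H_N(G_{N+1}-1)$ together with $\norm{G_{N+1}-1}{\rho}\to 0$, which is again guaranteed by \eqref{type} and $\lambda>0$. The limit $G$ then inherits the estimate $\norm{G}{\rho}\leq p^{C+\mu\,n_0(\rho)}$, valid for every $\rho<1$ with the same constant $C$; this places $G$ in $\cH$ and exhibits it as having order at most $\mu$.

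There is no real obstacle in the argument: the only genuine observation is that $n_0(\rho)$ is exactly the threshold beyond which the factors begin to decay, so the expensive $p^{\mu}$ estimate is paid only $n_0(\rho)$ times and the tail sum is absorbed into a single additive constant. All the analytic content—the control of $\norm{G_n-1}{\rho}$ by a geometric series in $n-n_0(\rho)$—has already been packaged into the definition of type $(\lambda,\mu)$, which is itself motivated by Proposition \ref{convergence} and will be applied below to products of the form $\prod_n \txin{n}{\inclus{\Jp}{\JJ}}$.
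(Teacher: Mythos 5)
Your argument is correct and follows essentially the same lines as the paper's proof: both split the product at the threshold $n_0(\rho)$, pay the $p^{\mu}$ price only for the finitely many factors below the threshold, and absorb the tail into a bounded contribution using $\lambda>0$. The only differences are cosmetic: the paper works directly with $\rho=\rho_0^{1/p^m}$ (so that $n_0(\rho_0^{1/p^m})=m$) and bounds the first $m+\lfloor\nu/\lambda\rfloor$ factors each by $p^{\mu}$, obtaining $\norm{p^{m\mu}G}{\rho_0^{1/p^m}}\leq p^{\lfloor\nu/\lambda\rfloor\mu}$, whereas you keep a general $\rho$ and separate the tail sum into a constant $C=\sum_{k\geq 0}\max(0,\nu-\lambda k)$ independent of $\mu$, giving the marginally sharper bound $p^{C}$. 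One step you leave tacit (worth a phrase): passing from $\norm{G}{\rho}\leq p^{C+\mu n_0(\rho)}$ to the order bound requires specializing to, say, $\rho=\rho_0$ and using $n_0(\rho_0^{1/p^m})=m$, so that $\norm{p^{m\mu}G}{\rho_0^{1/p^m}}\leq p^{C}$ is bounded in $m$, which is exactly what the paper's definition of order demands.
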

La factorisation $G=\prod_n G_n$ est dite de
\textsl{vitesse de convergence} $\geq \lambda$.

\begin{proof}
On déduit de l'équation \eqref{type} que pour $0<\rho<1$,
$\norm{G_n-1}{\rho}$
tend vers 0 lorsque $n \to \infty$,
ce qui prouve la convergence de $\prod_n G_n=\prod_{n=1}^\infty\left( 1 + (G_n-1) \right)$.
On définit ainsi un élément $G$ de $\cH$.
Rappelons que l'ordre de $G$ est le minimum des $h$ tels que
$\norm{p^{mh}G}{\rho^{1/p^m}}$ est borné par rapport à $m$ pour un $\rho<1$.
Prenons $\rho=\rho_0=p^{-\frac{1}{p-1}}$. Comme $n_0(\rho_0^{1/p^m}) = m$,
\begin{equation*}
\norm{G_n - 1}{\rho_0^{1/p^m}} \leq
\begin{cases}
 p^{\nu}p^{-\lambda(n-m)} &\text{si $n\geq m$}\\
 p^\mu &\text{si $n\leq m$}.
\end{cases}
\end{equation*}
On en déduit
que si $n\geq m + \frac{\nu}{\lambda}$,
$\norm{G_n}{\rho_0^{1/p^m}}\leq 1$.
Donc
\begin{equation*}
\begin{split}
\prod_{n=1}^\infty \norm{G_n}{\rho_0^{1/p^m}} &\leq
\prod_{n=1}^{m+\lfloor\frac{\nu}{\lambda}\rfloor} \norm{G_n}{\rho_0^{1/p^m}}
\leq p^{\lfloor\frac{\nu}{\lambda}\rfloor\mu}p^{m\mu} \, \\
\norm{p^{m\mu}G}{\rho_0^{1/p^m}} &\leq p^{\lfloor\frac{\nu}{\lambda}\rfloor\mu}
\end{split}
\end{equation*}
et $G$ est d'ordre inférieur ou égal à $\mu$.
\end{proof}
\begin{prop}
\label{prop:majoration}
Pour tout réel $ \rho$ tel que $0 < \rho <1$, on a
\begin{equation*}
\norm{\txin{n}{\inclus{\Jp}{\JJ}}-1}{\rho}
\leq \begin{cases}
p^{\frac{p-2}{p-1}\lgg{\JJ}+\tborne{\lgg{\JJ}} + \lgg{\Jp}}p^{-(n-n_0(\rho))\lgg{\JJ}} &\text{ si $n\geq n_0(\rho)$}\\
p^{\tborne{\lgg{\JJ}} + \lgg{\Jp}} &\text{ si $n\leq n_0(\rho)$}
\end{cases}
\end{equation*}
pour $n > \ord_p((\lgg{\JJ}-1)!) - (\ord_p(u-1)-1)$.
\end{prop}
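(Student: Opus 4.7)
Le plan est d'appliquer la proposition \ref{convergence} au polyn\^ome $H = \txin{n}{\inclus{\Jp}{\JJ}} - 1$. La premi\`ere \'etape consiste \`a observer, gr\^ace \`a la proposition \ref{const:norme}, que $\txin{n}{\inclus{\Jp}{\JJ}} \equiv 1 \bmod \bomega{n-1}{\JJ}$, de sorte que $H$ est divisible par $\bomega{n-1}{\JJ} = \prod_{j\in \JJ}\omega_{n-1}^{(j)}$. Quitte \`a translater $\JJ$ pour le ramener \`a la forme $[0,r[$, on peut alors appliquer la proposition \ref{convergence} avec $r = \lgg{\JJ}$ et $\alpha_j = 1$ pour chaque $j$, ce qui fournit $\lambda = \lgg{\JJ}$.

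La deuxi\`eme \'etape est de borner $\norm{H}{1}$. Sous l'hypoth\`ese $n > \ord_p((\lgg{\JJ}-1)!) - (\ord_p(u-1)-1)$ faite dans l'\'enonc\'e, la proposition \ref{const:norme} fournit la majoration $\norm{\txin{n}{\inclus{\Jp}{\JJ}}}{1} \leq p^{\lgg{\Jp}+\tborne{\lgg{\JJ}}}$, et l'in\'egalit\'e ultram\'etrique (avec $\norm{1}{1}=1$) donne imm\'ediatement la m\^eme borne pour $\norm{H}{1}$.

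Il suffira alors de combiner ces deux ingr\'edients avec la proposition \ref{convergence}, en utilisant l'identit\'e $(p\rho_0)^{\lgg{\JJ}} = p^{\frac{p-2}{p-1}\lgg{\JJ}}$ (puisque $\rho_0 = p^{-1/(p-1)}$) dans le cas $n\geq n_0(\rho)$, et la majoration triviale $(\rho^{p^n})^{\lgg{\JJ}}\leq 1$ dans le cas $n\leq n_0(\rho)$. On obtient alors directement les deux in\'egalit\'es annonc\'ees.

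L'argument est essentiellement une r\'ecolte d'informations d\'ej\`a \'etablies~; il n'y a pas d'obstacle technique v\'eritable, toute la difficult\'e ayant \'et\'e absorb\'ee dans les propositions \ref{convergence} et \ref{const:norme}. Le seul point \`a surveiller est la coh\'erence des exposants, en particulier l'apparition du facteur $\frac{p-2}{p-1}\lgg{\JJ}$ venant pr\'ecis\'ement du calcul de $(p\rho_0)^{\lgg{\JJ}}$, et l'additivit\'e des contributions $\lgg{\Jp}+\tborne{\lgg{\JJ}}$ (provenant de $\norm{H}{1}$) et $-(n-n_0(\rho))\lgg{\JJ}$ (provenant de la d\'ecroissance g\'eom\'etrique donn\'ee par la proposition \ref{convergence}).
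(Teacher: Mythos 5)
Votre démonstration est correcte et suit essentiellement la même voie que celle du texte : divisibilité de $H=\txin{n}{\inclus{\Jp}{\JJ}}-1$ par $\bomega{n-1}{\JJ}$ (tirée de la congruence $\txin{n}{\inclus{\Jp}{\JJ}}\equiv 1 \bmod \bomega{n-1}{\JJ}$ de la proposition \ref{const:norme}), borne sur $\norm{H}{1}$ via l'inégalité ultramétrique, puis application de la proposition \ref{convergence} avec $\lambda = \lgg{\JJ}$ et l'identité $(p\rho_0)^{\lgg{\JJ}}=p^{\frac{p-2}{p-1}\lgg{\JJ}}$. La seule nuance est que le texte justifie la divisibilité en observant directement que $\frac{\xi_n^{(j)}}{p}-1$ est multiple de $\omega_{n-1}^{(j)}$ pour $j\in\Jp$, tandis que vous citez la congruence déjà établie, ce qui revient au même.
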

\begin{proof}
Comme $\norm{\txin{n}{\inclus{\Jp}{\JJ}}}{1} \leq p^{\tborne{\lgg{\JJ}}+\lgg{\Jp}}$, il en est de même de
$\norm{\txin{n}{\inclus{\Jp}{\JJ}}-1}{1}$. D'autre part,
$\txin{n}{\inclus{\Jp}{\JJ}}-1$ est divisible par
$\prod_{j\in J}\omega_{n-1}^{(j)} $
car
$\prod_{j \in \Jp} (\frac{\xi_{n}^{(j)}}{p}-1)
$
est un multiple de $\prod_{j\in \Jp}\omega_{n-1}^{(j)} $.
On applique la proposition \ref{convergence}
à $H=\txin{n}{\inclus{\Jp}{\JJ}}-1$
avec $\norm{H}{1}\leq p^{\tborne{\lgg{\JJ}}+\lgg{\Jp}}$ et $\lambda=\lgg{\JJ}$.
\end{proof}
\begin{prop}\label{xin}
Soient  $\JJ$ un intervalle fini de $\ZZ$
et  $\Jp$ un sous-intervalle de  $\JJ$.
Soit $N$ un entier positif.

Le produit infini $\mxiinf{N}{\inclus{\Jp}{\JJ}}=\prod_{m=N}^\infty \txin{m}{\inclus{\Jp}{\JJ}}$\index{$\mxiinf{N}{\inclus{\Jp}{\JJ}}$}
est de vitesse de convergence supérieure ou égale à $\lgg{\JJ}$,
définit une fonction d'ordre compris entre $\lgg{\Jp}$ et $\lgg{\Jp} + \tborne{\lgg{\JJ}}$
telle que pour
$\zeta$ racine de l'unité d'ordre $p^{n}$,
\begin{equation*}
\mxiinf{N}{\inclus{\Jp}{\JJ}} (u^{j}\zeta -1)=\begin{cases}
  0 &\text{ si $n \geq N$ et $j \in \Jp$}\\
  1 &\text{ si $n < N$ et $j\in \JJ$}.
\end{cases}
\end{equation*}
et est divisible par $\mloginf{\Jp}{N}=\prod_{m=N}^{\infty}\mlog{\Jp}{m}$ dans $\cH$.
En particulier, lorsque $\lgg{\JJ} < p$, les deux fonctions $\mxiinf{N}{\inclus{\Jp}{\JJ}}$ et
$\mloginf{\Jp}{N}$ ont même ordre $\lgg{\Jp}$ et le quotient
$\mxiinf{N}{\inclus{\Jp}{\JJ}}/\mloginf{\Jp}{N}$\index{$\mxiinf{N}{\inclus{\Jp}{\JJ}}$}
est une unité de $\Zp[[x]]$.
\end{prop}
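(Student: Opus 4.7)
The plan is to recognize $\mxiinf{N}{\inclus{\Jp}{\JJ}}$ as an infinite product of type $(\lgg{\JJ},\lgg{\Jp}+\tborne{\lgg{\JJ}})$ in the sense of the preceding definition, and then invoke Proposition \ref{produit} together with a divisibility argument to pin down both the upper and lower bound on the order.

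First, I would combine two inputs already at hand. From Proposition \ref{const:norme} we have $\norm{\txin{m}{\inclus{\Jp}{\JJ}}}{1}\leq p^{\lgg{\Jp}+\tborne{\lgg{\JJ}}}$ for $m$ large enough (the exceptional small indices, only finitely many, can be absorbed separately without affecting asymptotic behaviour). From Proposition \ref{prop:majoration} we have, for $m\geq n_0(\rho)$,
\begin{equation*}
\norm{\txin{m}{\inclus{\Jp}{\JJ}}-1}{\rho}\leq p^{\nu} p^{-\lgg{\JJ}(m-n_0(\rho))},
\qquad \nu=\tfrac{p-2}{p-1}\lgg{\JJ}+\tborne{\lgg{\JJ}}+\lgg{\Jp}.
\end{equation*}
These are precisely the two conditions required with $\lambda=\lgg{\JJ}>0$ and $\mu=\lgg{\Jp}+\tborne{\lgg{\JJ}}$, so Proposition \ref{produit} gives the convergence of the product in $\cH$, the statement on the speed of convergence, and the upper bound $\lgg{\Jp}+\tborne{\lgg{\JJ}}$ on the order of $\mxiinf{N}{\inclus{\Jp}{\JJ}}$.

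Next, I would read off the values at $u^{j}\zeta-1$ directly from the defining congruences in Proposition \ref{const:norme}: the factor $\txin{m}{\inclus{\Jp}{\JJ}}$ vanishes on $u^{j}\zeta-1$ when $\zeta$ has order $p^m$ and $j\in\Jp$, and equals $1$ on $u^{j}\zeta-1$ when $\zeta$ has order $p^{k}$ with $k<m$ and $j\in\JJ$. For $\zeta$ of order $p^{n}$ with $n\geq N$ and $j\in\Jp$, the factor indexed by $m=n$ vanishes, killing the whole product; for $n<N$ and $j\in\JJ$, every factor evaluates to $1$. The same congruences also show that each $\txin{m}{\inclus{\Jp}{\JJ}}$ is divisible by $\mlog{\Jp}{m}$, so the full infinite product is divisible by $\mloginf{\Jp}{N}$ in $\cH$; since $\mloginf{\Jp}{N}$ has order exactly $\lgg{\Jp}$, this forces the order of $\mxiinf{N}{\inclus{\Jp}{\JJ}}$ to be at least $\lgg{\Jp}$, completing the two-sided bound.

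Finally, the case $\lgg{\JJ}<p$ follows from Corollary \ref{aunitepres}: each ratio $\txin{m}{\inclus{\Jp}{\JJ}}/\mlog{\Jp}{m}$ is a unit of $\Zp[[x]]$, and the estimates in the first step, applied to these unit ratios, show that $\prod_{m\geq N}\txin{m}{\inclus{\Jp}{\JJ}}/\mlog{\Jp}{m}$ converges to a power series in $\Zp[[x]]$ whose reduction mod $p$ has constant term congruent to $1$, hence is itself a unit of $\Zp[[x]]$. The main obstacle is essentially bookkeeping: one must verify that the hypothesis $n>\ord_p((\lgg{\JJ}-1)!)-(\ord_p(u-1)-1)$ in Proposition \ref{prop:majoration} only excludes finitely many $m$, so that truncating and recombining the finitely many initial factors affects neither the order estimate nor the unit statement; everything else is a direct application of the already-established propositions.
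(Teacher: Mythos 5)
Your proof follows exactly the paper's route: you verify the two conditions of the "type $(\lambda,\mu)$" definition via Propositions \ref{const:norme} and \ref{prop:majoration}, invoke Proposition \ref{produit} for convergence and the order upper bound, read off the interpolation values from the defining congruences, deduce divisibility by $\mloginf{\Jp}{N}$ (whose order is $\lgg{\Jp}$) for the lower bound, and cite Corollary \ref{aunitepres} for the unit statement when $\lgg{\JJ}<p$. The only difference is cosmetic — you spell out why a convergent product of units of $\Zp[[x]]$ is again a unit and why the finitely many exceptional small indices are harmless — but these are exactly the (unstated) observations underlying the paper's terser proof, so the argument is the same.
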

\
\begin{proof}
On applique la proposition \ref{produit} avec les majorations
de la proposition \ref{prop:majoration} pour obtenir la majoration de l'ordre.
Soit $\zeta$ une racine de l'unité d'ordre $p^n$.
Pour  $\JJ \neq \emptyset$ et $j\in J$, on a
$\mxiinf{N}{\inclus{\Jp}{\JJ}} (u^{j}\zeta-1)= 0$ si $n\geq N$ et $1$ si $n<N$
pour $j \in \Jp$.
La fonction $\mxiinf{N}{\inclus{\Jp}{\JJ}}$ est donc divisible dans $\cH$ par
$\mloginf{\Jp}{N}$.
L'ordre du logarithme $p$-adique étant égal à 1,
on en déduit la minoration de l'ordre de $\mxiinf{N}{\inclus{\Jp}{\JJ}}$.
Le quotient est une unité grâce au corollaire \ref{aunitepres}.
\end{proof}
On pose $\mxiinf{N}{\emptyset}= \mxiinf{N}{\inclus{\emptyset}{\JJ}}=1$
et
$\mxiinf{N}{\JJ}=\mxiinf{N}{\inclus{\JJ}{\JJ}}$.
\begin{rem}
On espère en général la non-nullité de
$\mxiinf{N}{\inclus{\Jp}{\JJ}} (u^{j}\zeta -1)$
si $\zeta$ est une racine de l'unité d'ordre $p^n$ avec $n \geq N$ et $j \in \JJ-\Jp$
(démontré pour $\lgg{\JJ} < p$).
\end{rem}

\section{$\Lambda$-modules associés à un $\varphi$-module filtré}
\subsection{Polygones associés à un $\varphi$-module filtré}

Nous rassemblons ici quelques définitions et notions classiques
(\cite{fontaine}, \cite{katz}, \dots).

Le \textsl{polygone associé à la suite croissante de rationnels
$(a_1,\cdots, a_d)$} est le polygone dont les sommets sont $(0,0)$ et
les points $(i, \sum_{j=1}^i a_j)$ pour $i=1,\cdots, d$.
Autrement dit, c'est le polygone partant de $(0,0)$ et dont les pentes
successives sont les $a_i$ auquel on rajoute les demi-droites verticales
entre $(0,0)$ et $(0,\infty)$ et entre $(d,\sum_{j=1}^d a_i)$ et
$(d,\infty)$. La suite des pentes $(a_i)$ étant croissante,
le polygone est convexe et tourne sa concavité vers $+\infty$.
\footnote{On utilisera aussi le terme de polygone pour l'enveloppe convexe.}
Le polygone associé à la suite croissante $(a_1,\cdots, a_d)$
est au dessous du polygone associé à la suite croissante $(b_1,\cdots, b_d)$
si et seulement si $a_1 + \cdots + a_j \leq b_1 + \cdots + b_j$
pour tout $j=1, \cdots, d$.

Soit $A$ un anneau intègre possédant une théorie des diviseurs élémentaires.
Soit $M$ un $A$-module de type fini sans torsion
et $f$ un $A$-morphisme injectif de $M$ dans $ M$.
On appelle \textsl{suite des diviseurs élémentaires} (invariants de
Smith\footnote{en lien avec la
notion de matrice de Smith.}) de $f$ sur $M$
la suite des diviseurs élémentaires
$(a_1,\cdots, a_d)$ de $f(M) \subset M$ normalisée de manière à ce que
$$(a_d) \subset \cdots \subset (a_1)$$
(chaque $a_i$ est défini à une unité près).
La définition s'étend de manière naturelle lorsque $f$ est un homomorphisme
de $M$ dans $a^{-1} M$ pour un $a\in A$,
la suite des diviseurs élémentaires de $f$ sur $M$ est le produit de $a^{-1}$ par la suite des diviseurs élémentaires de $af $ sur $M$.
Une base $\cB= (v_1,\cdots,v_d)$ de $M$ est dite \textsl{adaptée} à $f$ si
 pour $(a_1, \cdots, a_d)$ la suite des diviseurs élémentaires,
 $$\sum_i A a_i^{-1}f(v_i) = M$$
ou encore en notant $M^{(i)}$ le $A$-module engendré par $(v_i, \cdots, v_d)$, si
 $$\sum_i A a_i^{-1}f(M^{(i)}) = M.$$
La matrice de $f$ dans la base $\cB$ est de la forme $V F$
avec $V \in GL_d(A)$ et $F$ la matrice diagonale de diagonale
$a_1, \cdots, a_d$.
Lorsque $A=\Zp$, il existe une suite croissante d'entiers
$s_1(M)$, $\cdots$, $s_d(M)$ telle que $\sum_i \Zp p^{-s_i(M)}f(v_i) =M$.
Les $p^{s_i(M)}$ sont les diviseurs élémentaires de $f(M)$ dans $M$.
La suite des $M^{(i)}$ forme une filtration décroissante de $M$
associée à $f$ et à $\cB$.
\begin{defn}[Polygone de Smith sur $\Zp$]
On appelle \textsl{polygone de Smith} de $(M,f)$ le polygone associé
à la suite des valuations $p$-adiques des diviseurs élémentaires de $f$ sur $M$.
\end{defn}
\begin{defn}Soit $\DD$ un $\varphi$-module, c'est-à-dire un $\EEE$-espace vectoriel
muni d'un endomorphisme bijectif.
Le \textsl{polygone de Newton associé à $(\DD,\varphi)$} est le polygone de
Newton du polynôme caractéristique $\det_{\DD}(1 - x \varphi)$.
Si $\det_{\DD}(1 - x \varphi)=1+a_1 x+ \cdots + a_d x^d$,
c'est le bord de l'enveloppe complexe des points $(i,\ord_p(a_i))$
pour $i=0,\cdots,d$ dans le plan.
C'est aussi le polygone associé à la suite croissante des valuations des valeurs propres de
$\varphi$.
\end{defn}
\begin{prop}[\cite{katz}, Katz, Mazur] Soient $\DD$ un $\varphi$-module
et $M$ un sous-$\Zp$-module de $\DD$.
Le polygone de Smith de $\varphi$ sur $M$ et le polygone de Newton de
$(\DD,\varphi)$ ont mêmes extrémités et
le polygone de Smith de $\varphi$ sur $M$ est au dessous du polygone de Newton
de $\varphi$.
\end{prop}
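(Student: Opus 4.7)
Je suppose que $M$ est un r\'eseau de $\DD$ (libre de rang $d=\dim_{\Qp}\DD$), avec $\varphi(M)\subset a^{-1}M$ pour un certain $a\in\Zp$, condition implicite pour que le polygone de Smith soit d\'efini.

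Pour l'\'egalit\'e des extr\'emit\'es, je noterai que les deux polygones partent de l'origine, puis comparerai leurs ordonn\'ees finales. Celle du polygone de Newton vaut par d\'efinition $\ord_p(\det_{\DD}\varphi)$~; celle du polygone de Smith vaut $\sum_{i=1}^d s_i(M)$. Dans une base $\cB$ adapt\'ee \`a $\varphi$, la matrice de $\varphi$ sur $M$ s'\'ecrit $VF$ avec $V\in\Gl_d(\Zp)$ et $F$ diagonale de puissances de $p$, d'o\`u $\ord_p(\det\varphi)=\sum_i s_i(M)$, et les deux extr\'emit\'es co\"incident.

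Pour l'in\'egalit\'e, j'\'ecrirai $s_1\le\cdots\le s_d$ les pentes de Smith et $\lambda_1\le\cdots\le\lambda_d$ celles de Newton~; il s'agit d'\'etablir $\sum_{i\le j}s_i\le \sum_{i\le j}\lambda_i$ pour tout $j$. Mon plan est de proc\'eder par puissances ext\'erieures~: le couple $(\bigwedge^j\DD,\bigwedge^j\varphi)$ forme un $\varphi$-module de dimension $\binom{d}{j}$ dans lequel $\bigwedge^j M$ est un r\'eseau (\`a un d\'ecalage par une puissance de $a$ pr\`es). Les pentes de Newton de $\bigwedge^j\varphi$ sont les $\lambda_{i_1}+\cdots+\lambda_{i_j}$, dont la plus petite vaut $\lambda_1+\cdots+\lambda_j$~; quant aux diviseurs \'el\'ementaires de $\bigwedge^j\varphi$ sur $\bigwedge^j M$, je m'attends \`a ce qu'un calcul en base adapt\'ee $(v_1,\ldots,v_d)$ montre qu'ils sont les $p^{s_{i_1}+\cdots+s_{i_j}}$ pour $i_1<\cdots<i_j$, donnant une plus petite pente de Smith \'egale \`a $s_1+\cdots+s_j$. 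Il me suffira alors d'\'etablir le lemme-cl\'e dans le cas $j=1$~: pour tout couple $(\DD,M)$ analogue, $s_1(M)\le\lambda_1$.

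Ce lemme-cl\'e est la partie facile~: quitte \`a remplacer $\varphi$ par $p^{-s_1(M)}\varphi$, on se ram\`ene \`a $\varphi(M)\subset M$ avec $s_1(M)=0$~; le polyn\^ome caract\'eristique de $\varphi$ vu comme endomorphisme de $M$ est alors \`a coefficients dans $\Zp$, ses pentes de Newton sont positives ou nulles, d'o\`u $\lambda_1\ge 0=s_1$. Le point le plus d\'elicat du plan reste donc la description explicite des diviseurs \'el\'ementaires de $\bigwedge^j\varphi$ sur $\bigwedge^j M$ \`a partir de ceux de $\varphi$ sur $M$~: c'est purement calculatoire mais demande de contr\^oler la factorisation $VF$ \`a travers $\bigwedge^j$, en constatant que les $v_{i_1}\wedge\cdots\wedge v_{i_j}$, ordonn\'es par valeur croissante de $s_{i_1}+\cdots+s_{i_j}$, forment une base adapt\'ee de $\bigwedge^j M$ pour $\bigwedge^j\varphi$.
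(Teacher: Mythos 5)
La proposition est cit\'ee telle quelle de \cite{katz} (o\`u le polygone dit ici \og de Smith \fg{} est le polygone de Hodge) : le texte n'en donne aucune d\'emonstration, il n'y a donc pas de preuve du papier \`a laquelle comparer la v\^otre. Votre argument est correct et est essentiellement la d\'emonstration standard de Katz. L'\'egalit\'e des extr\'emit\'es vient de $\sum_i s_i = \ord_p(\det\varphi)=\sum_i\lambda_i$, et l'in\'egalit\'e des sommes partielles se ram\`ene au cas $j=1$ par puissances ext\'erieures. Le point que vous signalez comme le plus d\'elicat est en fait imm\'ediat une fois \'ecrit matriciellement : si la matrice de $\varphi$ dans une base $\cB$ adapt\'ee \`a $M$ s'\'ecrit $VF$ avec $V\in\Gl_d(\Zp)$ et $F=\mathrm{diag}(p^{s_1},\ldots,p^{s_d})$, alors $\wedge^j(VF)=(\wedge^j V)(\wedge^j F)$ avec $\wedge^j V\in\Gl_{\binom{d}{j}}(\Zp)$ (son d\'eterminant est une puissance de $\det V$) et $\wedge^j F$ diagonale d'entr\'ees $p^{s_{i_1}+\cdots+s_{i_j}}$ ; c'est donc bien une forme de Smith de $\wedge^j\varphi$ sur $\wedge^j M$, de plus petit invariant $s_1+\cdots+s_j$. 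Le lemme-cl\'e $s_1\le\lambda_1$ se justifie comme vous le faites : apr\`es normalisation on a $\varphi(M)\subset M$ avec $s_1=0$, le polyn\^ome caract\'eristique de $\varphi|_M$ est unitaire \`a coefficients dans $\Zp$, donc toutes ses racines (les valeurs propres de $\varphi$) sont de valuation $\geq 0$, d'o\`u $\lambda_1\geq 0$.
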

Le polygone que nous appelons ici de Smith est appelé polygone de Hodge dans
\cite{katz}.
Il a d'ailleurs vocation à être égal au polygone dit de Hodge-Tate.
\begin{defn}
Soit $\DD$ un \textsl{module filtré} de dimension $d$, c'est-à-dire
un $\EEE$-espace vectoriel muni d'une filtration $\Fil^\bullet \DD$
de $\EEE$-espaces vectoriels, décroissante, exhaustive et séparée.
Soit $(t_{\HT,1}, \cdots, t_{\HT,d})$ la suite croissante des entiers $i$ tels que
$\Fil^{i}\DD/\Fil^{i+1}\DD$ est non nul,
comptés avec une multiplicité égale à $\dim_{\EEE}\Fil^{i}\DD/\Fil^{i+1}\DD$
(\textsl{poids de Hodge-Tate}).
\end{defn}
On a donc
$\Fil^{t_{\HT,1}} \DD=\DD$, $\Fil^{t_{\HT,1}+1} \DD\neq \DD$, $\Fil^{t_{\HT,d}}\DD \neq 0$,
$\Fil^{t_{\HT,d}+1}\DD = 0$.
Si $v\in \DD$, soit $t_H(v)$ le plus grand entier $j$ tel que $v \in \Fil^j \DD$.
\begin{defn}
Le \textsl{polygone de Hodge-Tate d'un module filtré} est le polygone associé
à la suite croissante de ses poids de Hodge-Tate.
\end{defn}
\begin{defn}
Une \textsl{base adaptée à la filtration} est une base $(v_1,\cdots, v_d)$
telle que $v_i\in \Fil^{t_{\HT,i}} \DD$ pour tout $i$. Ainsi,
$(v_i, \cdots, v_d)$ est contenu dans $\Fil^{t_{\HT,i}}\DD$.
Si les poids de Hodge-Tate sont distincts,
$(v_i, \cdots, v_d)$ est une base de $\Fil^{t_{\HT,i}}\DD$.
\end{defn}

Soit $(\Delta^i)$ une \textsl{graduation de la filtration} $\Fil^\bullet $,
c'est-à-dire
une famille de sous-espaces $\Delta^i$ de $\Fil^i \DD$ tels que
la projection $\Delta^{i}\to \Fil^{i}\DD/\Fil^{i+1}\DD$ est un isomorphisme.
L'ensemble des $i$ tels que
$\Delta^i\neq 0$ est l'ensemble des poids de Hodge-Tate. On a
$\Fil^i \DD=\oplus_{j\geq i} \Delta^j$
ou
$\Fil^{t_{\HT,i}} \DD=\oplus_{j\geq t_{\HT,i}} \Delta^{j}$.
Une base $\cB=(v_1,\cdots,v_d)$ adaptée
à la filtration permet de définir une telle graduation
$\Delta^\bullet$: si $\Delta^i \neq 0$,
$\Delta^i$ est alors le sous-espace engendré par
les $v_j$ tels que $t_H(v_j)=i$.

\begin{defn}
Soit $(\DD, \Fil^{\bullet})$ un module filtré et
$(\Delta^i)$ une graduation de la filtration.
On définit l'\textsl{homomorphisme de pentes de Hodge-Tate} $t_{\HT,\Delta}$ par
$t_{\HT,\Delta}(v)=p^{i} v$ pour $v \in \Delta^i$.
Si $\cB=(v_1,\cdots,v_d)$ est une base adaptée à la filtration,
on note $t_{\HT,\cB}$ l'endomorphisme défini par
$t_{\HT,\cB}(v_i)=p^{t_{\HT}(v_i)} v_i$. La définition est compatible
avec celle de $t_{\HT,\Delta}$ pour $\Delta$ associée à $\cB$.
\end{defn}
\begin{defn}
 Une base $\cB$ est dite \textsl{adaptée au $\varphi$-module filtré
$(\DD, \varphi, \Fil^\bullet \DD)$} si
$\cB$ est une base adaptée à la fois à $\varphi$ et à la filtration $\Fil^\bullet \DD$.
\end{defn}
Il est équivalent de dire que la filtration associée à $\varphi$
et à $\cB$ coincide avec la filtration $\Fil^\bullet$.
\begin{defn}[Fontaine, \cite{fontaine} par exemple]
\label{def:faibleadm}
Un réseau $M$ d'un $\varphi$-module filtré $(\DD,\varphi,\Fil^\bullet)$ est dit
\textsl{fortement divisible} s'il existe une base de $M$ adaptée
au $\varphi$-module filtré $(\DD, \varphi, \Fil^\bullet \DD)$
et si le polygone de Smith de $\varphi$ sur $M$
est égal au polygone de Hodge-Tate de $(\DD, \Fil^\bullet \DD)$.
Si $\DD$ admet un tel réseau, $\cD$ est dit \textsl{faiblement admissible}.
\end{defn}
\begin{lem}
Un réseau $M$ de $\DD$ est fortement divisible si et seulement si $M$ admet
une base $\cB$ adaptée au $\varphi$-module filtré $(\DD, \varphi, \Fil^\bullet \DD)$
telle que $\varphi \circ t_{\HT,\cB}^{-1} (M)=M$.
\end{lem}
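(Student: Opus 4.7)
L'id�e est de traduire les deux conditions de l'�quivalence en conditions matricielles dans une m�me base $\cB=(v_1,\ldots,v_d)$ adapt�e � la fois � $\varphi$ et � la filtration, puis de constater qu'elles co�ncident. Je commencerais par observer, par un argument dimensionnel, que pour une telle base $\cB$ adapt�e � la filtration on a automatiquement $t_H(v_i)=t_{\HT,i}$ pour tout $i$, y compris en pr�sence de poids de Hodge-Tate r�p�t�s. Il en r�sulte que la matrice de $t_{\HT,\cB}$ dans $\cB$ est la matrice diagonale $F=\mathrm{diag}(p^{t_{\HT,1}},\ldots,p^{t_{\HT,d}})$.

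Pour le sens direct, supposant $M$ fortement divisible, l'adaptation de $\cB$ � $\varphi$ fournit une d�composition de la matrice $A$ de $\varphi$ dans $\cB$ sous la forme $A=VF'$ avec $V\in\Gl_d(\Zp)$ et $F'=\mathrm{diag}(p^{s_1},\ldots,p^{s_d})$ form�e des diviseurs �l�mentaires de $\varphi$ sur $M$. L'�galit� du polygone de Smith avec celui de Hodge-Tate impose $s_i=t_{\HT,i}$ pour tout $i$, donc $F'=F$, et la matrice de $\varphi\circ t_{\HT,\cB}^{-1}$ dans $\cB$ est $V\in\Gl_d(\Zp)$; cet endomorphisme pr�serve donc le r�seau $M$. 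R�ciproquement, partant d'une base $\cB$ adapt�e au $\varphi$-module filtr� telle que $\varphi\circ t_{\HT,\cB}^{-1}(M)=M$, la matrice $AF^{-1}$ pr�serve bijectivement le $\Zp$-module libre $M$ donc appartient � $\Gl_d(\Zp)$; l'�criture $A=(AF^{-1})\cdot F$ est alors une d�composition de Smith pour $\varphi$ avec diviseurs �l�mentaires $p^{t_{\HT,i}}$, d'o� l'�galit� des polygones, c'est-�-dire la divisibilit� forte de $M$.

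L'obstacle principal est le lemme pr�liminaire $t_H(v_i)=t_{\HT,i}$. Il d�coule de l'argument suivant: si $w=t_{\HT,i_0}$ est un poids de multiplicit� $m$, indexant les $j\in[i_0,i_0+m-1]$, les vecteurs $v_j$ pour $j\geq i_0$ sont lin�airement ind�pendants et contenus dans $\Fil^w\DD$, donc en forment une base par �galit� des dimensions; la projection $\Fil^w\to\Fil^w/\Fil^{w+1}$ annule les $v_j$ pour $j\geq i_0+m$ (qui sont dans $\Fil^{w+1}$), donc envoie les $m$ vecteurs $v_{i_0},\ldots,v_{i_0+m-1}$ sur une base du quotient, ce qui donne $t_H(v_j)=w$ pour ces indices.
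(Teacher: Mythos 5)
Your proof is correct and follows essentially the same approach as the paper: comparing the Smith decomposition $A=VF'$ of the matrix of $\varphi$ in an adapted base with the diagonal matrix $F$ of $t_{\HT,\cB}$. Your explicit preliminary lemma that $t_H(v_i)=t_{\HT,i}$ even when Hodge-Tate weights repeat is a worthwhile addition, since the paper uses this fact implicitly when asserting $\varphi(v)\in p^{t_H(v)}M$ without justification; your forward direction is also marginally more direct in identifying $F'=F$ at once rather than first proving an inclusion and then invoking the determinant.
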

\begin{proof}
Prenons une base $\cB$ de $M$ adaptée à la filtration et à $\varphi$.
L'égalité des extrémités des polygones de Hodge-Tate et de Smith
signifie que le déterminant de $\varphi$
est de valuation égale à $\sum_{v\in \cB} t_H(v)$
(propriété indépendante de la base ou du réseau choisi).
Le déterminant de $\varphi\circ t_{\HT,\cB}^{-1}$ est donc une unité.

Supposons l'égalité des polygones de Smith de $(M,\varphi)$
et de Hodge-Tate de $\DD$:
on a $\varphi (v) \in p^{t_H(v)} M$ pour $v\in \cB$. Donc
$\varphi\circ t_{\HT,\cB}^{-1}(M)\subset M$.
Il y a donc égalité.
La réciproque est claire par définition des diviseurs élémentaires de $\varphi$.
\end{proof}

\begin{thm}[Laffaille \cite{FL}]
Le $\varphi$-module filtré $(\DD, \varphi, \Fil^\bullet \DD)$
est faiblement admissible si et seulement si le critère numérique suivant est vérifié :
\begin{enumerate}
\item le polygone de Hodge-Tate de $(\DD,\Fil^\bullet)$ et le polygone de Newton de
$(\DD, \varphi)$ ont mêmes extrémités;
\item pour tout sous-module $\DD'$ de $\DD$ stable par $\varphi$,
le polygone de Hodge-Tate de $(\DD', \DD' \cap \Fil^\bullet \DD )$ est en dessous du polygone
de Newton de $(\DD', \varphi_{| \DD'})$.
\end{enumerate}
\end{thm}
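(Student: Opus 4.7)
Le plan est de démontrer séparément les deux implications. Pour le sens direct, supposons que $\DD$ admette un réseau fortement divisible $M$ muni d'une base adaptée $\cB$. Le lemme précédent assure que $\varphi\circ t_{\HT,\cB}^{-1}$ stabilise $M$ en y agissant par un automorphisme, d'où $\ord_p(\det \varphi) = \sum_{v\in \cB} t_H(v)$ qui est l'ordonnée commune en $d$ des polygones de Newton et de Hodge-Tate, donnant la condition (1). Pour la condition (2), soit $\DD'\subset \DD$ un sous-$\varphi$-module et posons $M' = M\cap \DD'$, réseau de $\DD'$. La proposition de Katz-Mazur rappelée plus haut place le polygone de Smith de $\varphi$ sur $M'$ sous le polygone de Newton de $\varphi|_{\DD'}$. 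Il reste alors à montrer que le polygone de Hodge-Tate de $(\DD', \DD'\cap \Fil^\bullet)$ est lui-même sous le polygone de Smith de $\varphi$ sur $M'$: cela résulte d'une comparaison des dimensions des gradués de la filtration induite sur $M'$ avec celles de la filtration par puissances de $p$ définie par $\varphi\circ t_{\HT,\cB}^{-1}$ sur $M$, comparaison rendue possible par la forte divisibilité de $M$.

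La réciproque est la partie substantielle, due à Laffaille. On procède par récurrence sur $d=\dim\DD$, le cas $d=1$ étant immédiat puisque le poids de Hodge-Tate coïncide alors avec la valuation de la valeur propre de $\varphi$ par la condition (1). Pour l'étape inductive, on choisit un sous-$\varphi$-module propre $\DD'$ (par exemple isocline de pente de Newton minimale, dont l'existence sur $\Qp$ découle de la stabilité galoisienne de la filtration par les pentes). Le critère numérique se transmet à $\DD'$ muni de la filtration induite et à $\DD''=\DD/\DD'$ muni de la filtration quotient, en utilisant que l'image réciproque dans $\DD$ d'un sous-$\varphi$-module stable de $\DD''$ est stable dans $\DD$. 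L'hypothèse de récurrence fournit alors des réseaux fortement divisibles $M'\subset\DD'$ et $M''\subset\DD''$, qu'il s'agit de recoller en un réseau fortement divisible de $\DD$ tout entier.

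L'obstacle principal réside précisément dans cette étape de relèvement. Il faut construire une extension de $M'$ par $M''$ dans $\DD$ qui soit simultanément stable par $\varphi$, adaptée à la filtration $\Fil^\bullet \DD$, et dont le polygone de Smith coïncide avec le polygone de Hodge-Tate; cela revient à annuler une obstruction dans un groupe d'extensions approprié dans la catégorie des $\varphi$-modules filtrés. La démonstration originale de Laffaille évite cette formulation abstraite en exhibant itérativement une base adaptée aux deux structures, via une utilisation fine des inégalités du critère sur des chaînes bien choisies de sous-$\varphi$-modules découpant successivement les niveaux de la filtration: c'est là le c\oe ur technique du résultat et la raison pour laquelle la condition (2) doit porter sur \emph{tous} les sous-$\varphi$-modules et non seulement sur ceux associés à la décomposition de Dieudonné-Manin.
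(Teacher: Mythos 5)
Le papier ne démontre pas ce théorème : il est énoncé comme un résultat connu, attribué à Laffaille et renvoyé à \cite{FL}, sans qu'aucune preuve ne soit donnée. Il n'y a donc pas de preuve du papier à laquelle comparer votre tentative ; la seule question est celle de la validité intrinsèque de votre argument, et celui-ci présente des lacunes réelles.

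Dans le sens direct, l'étape où vous affirmez que le polygone de Hodge-Tate de $(\DD',\DD'\cap\Fil^\bullet)$ est sous le polygone de Smith de $\varphi$ sur $M'=M\cap\DD'$ n'est pas justifiée : l'appel à une \og comparaison des dimensions des gradués \fg{} est une pétition de principe, car la filtration induite sur $\DD'$ peut être mal positionnée par rapport au réseau $M'$, et la forte divisibilité de $M$ ne contrôle pas a priori ce placement. C'est précisément la partie non triviale du sens direct et elle n'est pas traitée. Dans le sens réciproque, l'argument est encore plus problématique : d'une part vous reconnaissez explicitement ne pas faire l'étape de recollement des réseaux $M'$ et $M''$, qui est pourtant le c\oe ur du théorème ; d'autre part le choix d'un $\DD'$ isocline de pente de Newton minimale ne garantit pas que le critère numérique (en particulier la condition (1), l'égalité des extrémités) se transmette à $\DD'$ muni de la filtration induite, car rien n'impose $t_H(\DD')=t_N(\DD')$ pour ce $\DD'$-là. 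Pour qu'une récurrence de ce type fonctionne, il faut choisir un sous-objet \emph{admissible}, c'est-à-dire saturant l'inégalité du critère, et traiter à part le cas où aucun tel sous-objet propre n'existe. En l'état, la proposition n'est pas une démonstration mais un plan incomplet assorti d'une description informelle de la stratégie de Laffaille.
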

Les définitions de raffinement ne sont pas les mêmes dans \cite{Mazur}
et dans \cite{bell-chen}.

\begin{defn}[\cite{Mazur}]
Un \textsl{raffinement d'un $\varphi$-module $\DD$} est un
drapeau complet $\cF_\bullet=(\cF_i)$ de $\cD$ stable par $\varphi$.
Une \textsl{base adaptée au raffinement $\cF$} est une base $(v_1, \cdots, v_d)$ de $\cD$ telle que
$\cF_i = ( v_{1}, \cdots, v_i )$.
\end{defn}
\begin{defn}
Un \textsl{raffinement d'un $\varphi$-module filtré $\DD$} est
la donnée d'un raffinement $\cF_\bullet$ du $\varphi$-module $\DD$ tel que
$\cF_{i} \cap \Fil^{t_{H,i}} \DD \neq \{0\}$ et
$\cF_{i} \cap \Fil^{t_{H,i}+1} \DD = \{0\}$
pour tout $1 \leq i \leq d$.
\end{defn}
Si de plus les poids de Hodge-Tate sont distincts, on a
$\DD=\cF_{i} \oplus \Fil^{t_{H,i}+1} \DD$.
On parle aussi de raffinement non critique de $\cD$ (\cite{bell-chen}).
\begin{defn}
Une base $\cB$ est dite \textsl{adaptée à un raffinement $\cF_\bullet$}
du $\varphi$-module filtré $(\DD, \varphi, \Fil^\bullet \DD)$
si $\cB$ est adaptée à la filtration $\Fil^\bullet \DD$ et au drapeau
$\cF_\bullet$.
\end{defn}
En général, une telle base $\cB$ n'est pas une base adaptée
au $\varphi$-module filtré $(\DD, \varphi, \Fil^\bullet \DD)$.
\begin{rem}
Supposons les poids de Hodge-Tate tous distincts de $\cD$ et $\varphi$ diagonalisable.
Si $\cB=( v_1,\cdots,v_d )$ est adaptée au raffinement
$\cF_\bullet$,
$(v_i, \cdots, v_d)$ est une base de $\Fil^{t_{H,i}} \DD$
et $(v_1,\cdots,v_i)$ est une base de $\cF_i$.
La matrice de $t_{\HT,\cB}$ dans la base $\cB$ est la matrice diagonale
$\smallmat{p^{t_{H,1}}\\
&\ddots\\
&& p^{t_{H,d}}}$. Il existe une base $\cB_0= (e_1,\cdots,e_d)$
formée de vecteurs propres
telle que $( e_1,\cdots,e_i)$ est une base de
$\cF_i$.
La matrice $P$ des vecteurs de $\cB$ dans la base $\cB_0$ est triangulaire supérieure
et peut être supposée unipotente :
$P =((\lambda_{i,j}))$ avec $\lambda_{i,j}=0 $ si
$i>j$ et $\lambda_{i,i}=1$. Si $\varphi_{\cB_0}$ est la matrice diagonale de $\varphi$
dans la base $\cB_0$, le réseau engendré par $\cB$ est fortement divisible si la matrice unipotente
$P\varphi_{\cB_0}P^{-1}\varphi_{\cB_0}^{-1}$ appartient à $\Gl_d(\Zp)$.
Cela se traduit par des conditions explicites
sur la valuation $p$-adique des $\lambda_{i,j}$.
\end{rem}
\begin{defn}
On note $\Qp[i]$ le $\varphi$-module filtré de Tate
défini comme le $\Qp$-espace vectoriel $\Qp$
muni de la filtration $\Fil^i \Qp[i]=\Qp$, $\Fil^{i+1} \Qp[i]=0$
et de l'endomorphisme $\varphi$ donné par la multiplication par $p^i$.
On note $\DD[i]=\DD \otimes \Qp[i]$ le twist de $\DD$ par $\Qp[i]$.
\end{defn}

\subsection{Définition des modules logarithmiques.}
%%Dans la suite, $\DD$ est un $\varphi$-module filtré sur $\EEE$ et
%%on reprend les notations précédentes.
On pose $K_n=\Qp(\mu_{p^n})$.
\begin{defn}
\begin{enumerate}
\item Pour $N\geq 0$, on note $\LLL{}{N}{\DD}\index{$\LLL{}{N}{\DD}$}$ le sous-module
de $\cH_\oEE \otimes \DD$ formé des éléments $G$ tels que
$$\phiun^{\nm}\Gg(u^{j}\zeta-1)
\in K_n \otimes\ot \Fil^{j} \DD$$ pour tous $j\leq t_{\HT,d}$ et $\zeta$ racine de l'unité
d'ordre $p^n$ avec $n \geq N$.
\item On note $\LLL{}{eul}{\DD}$ le sous-module
de $\LLL{}{1}{\DD}$ formé des éléments $\Gg$ tels que
$$(1-p^{-1+j}\varphi^{-1})\Gg(u^{j} - 1)
\in (1-p^{-j}\varphi)\Fil^{j} \DD$$
pour $j\leq t_{\HT,d}$.
\item
Pour $N<0$, on note $\LLL{\JJ}{N}{\DD}$ le sous-module
de $\LLL{}{0}{\DD}$ formé des éléments $\Gg$ vérifiant
$\Gg(u^{j}\zeta-1)= 0 $ pour tous $j \in ]t_{\HT,1},t_{\HT,d}]$ et $\zeta$ d'ordre divisant
$p^{\abs{N}{}-1}$.
\end{enumerate}
\end{defn}
Dans la définition de $\LLL{}{N}{\DD}$ pour $N\geq 0$, la condition
pour $j \leq t_{\HT,1}$ est automatique car $\Fil^{j}\DD$ est alors
égal à $\DD$.
Si $\DD$ a un seul poids de Hodge-Tate, $\LLL{}{N}{\DD}$
est $\cH\oEE \otimes \DD$
pour tout $N$.
\begin{defn}
\begin{enumerate}
\item
Soit $\DD$ un $\varphi$-module.
Un élément $\Gg$ de $\cH \otimes \DD$ est dit d'\textsl{ordre fini} s'il existe
un réel $t$ tel que la suite $(\norm{p^{t n}\phiun^{-n}\Gg}{\rho^{1/p^n}})_n$ est bornée
pour un $\rho <1$.
On définit alors sa \textsl{pente} comme le minimum de tels $t$.
\item
Soit $\DD$ un $\varphi$-module filtré.
Un élément $\Gg$ de $\cH \otimes \DD$ est dit de \textsl{pente finie}
s'il existe
un réel $t$ tel que la suite
$(\norm{p^{(t+t_{\HT,d}) n}\phiun^{-n}\Gg}{\rho^{1/p^n}})_n$ est bornée
pour un $\rho <1$. On définit alors sa \textsl{pente}
comme le minimum de tels $t$.
\end{enumerate}
\end{defn}
La notion de pente est invariante par twist par $\Qp[i]$
dans le cas des $\varphi$-modules filtrés, car
$t_{\HT,d}(\DD[i])=t_{\HT,d}(\DD)+i, \;\varphi_{\DD[i]}=p^i\varphi_{\DD}$.
Un $\varphi$-module peut être considéré comme un $\varphi$-module filtré de filtration
triviale $\Fil^0 \DD=\DD$, $\Fil^1 \DD=0$. Les deux notions coincident alors
et sont des cas particuliers de la notion de pente que l'on
trouve dans \cite{kedlaya08}, mais nous gardons le terme
d'ordre (ou de $\varphi$-ordre) utilisé dans \cite{bpr-debut} afin de distinguer.
\begin{defn}Soit $\DD$ un $\varphi$-module filtré.
Si $M$ est un sous-$\cH$-module de $\cH \otimes \DD$,
on définit $\Er{M}{t}$ comme le sous-$\Lambda\ot$-module engendré par
les éléments de $M$ de pente inférieure ou égale à $t$.
Un sous-$\cH$-module $M$ de $\cH \otimes \DD$ est dit
de pente $t$ si $t$ est le plus petit réel tel que $\cH \Er{M}{t} = M$.
\end{defn}
\subsection{Construction d'éléments d'ordre fini de $\LLL{}{}{\DD}$}
Fixons une base $\cB$ de $\DD$ adaptée à la filtration
de $\DD$. Soient $\Delta_\cB$ la graduation associée sur $\Zp$
et
$M$ le réseau de $\DD$ engendré par $\cB$.
On utilise comme norme d'un endomorphisme le maximum des normes des coefficients
de sa matrice dans la base $\cB$:
on a $\norm{fg}{\cB}\leq \norm{f}{\cB}\norm{g}{\cB}$.

Posons $\newtH$\index{$\newtH$} le plus petit entier supérieur ou égal à $t_{\HT,d}$
et tel que $\newtH-t_{\HT,1}>t_{\Ne,d}-t_{\Ne,1}$.
Dans les cas particuliers étudiés ensuite, on a
$\newtH=t_{\HT,d}$ ou $t_{HT,d}+1$.
Dans la suite,  $\JJ$ est un intervalle de $\ZZ$ contenant $\IIIprim{t_{\HT,1}}$.
En pratique, on prendra  $\JJ$ de la forme $\II{t_{\HT,1}}{h}$ avec $h\geq \newtH$.
Pour $n\geq 0$, on note
$\TH{\Delta_\cB}{n}{\JJ}$ l'endomorphisme de $\cH\oEE\otimes_\oEE \DD$ défini
pour $v\in \Delta_\cB^i$ par
\begin{equation*}
\TH{\Delta_\cB}{n}{\JJ}(v)=\txin{n}{\inclus{\III{i}}{\JJ}}{} \cdot v
\end{equation*}\index{$\TH{\Delta_\cB}{n}{\JJ}$}
(lorsque $\Delta_\cB^i$ est non nul, $i$ est un des $t_{\HT,j}$).
\begin{thm}\label{zndiv} Soit $\DD$ un $\varphi$-module filtré. Pour $\JJ$
contenant $\IIIprim{t_{\HT,1}}$, définissons par récurrence
la suite d'applications $\cH\oEE$-linéaires de $\cH\oEE\otimes\oEE \DD$ à valeurs
dans $\cH\oEE\otimes\oEE \DD$
\begin{equation*}
\ZZZ{\JJ}{\cB}{n}{N}=
 \phiun^{\np} \TH{\Delta_\cB}{n}{\JJ}
\phiun^{\nm} \ZZZ{\JJ}{\cB}{n-1}{N} \text{ pour $n > N$}
\end{equation*}\index{$\ZZZ{\JJ}{\cB}{n}{N}$}
et
$\ZZZ{\JJ}{\cB}{N}{N}=\id$.
On a pour $n \geq N$
\begin{equation}
\label{zndiv1}
\ZZZ{\JJ}{\cB}{n+1}{N} \equiv \ZZZ{\JJ}{\cB}{n}{N} \bmod \bomega{n}{\JJ}.
\end{equation}
\label{zndiv2}
  Pour $v\in \DD$, la suite des $\ZZZ{\JJ}{\cB}{n}{N}(v)$ converge
  dans $\cH\oEE\otimes\oEE \DD$.
\label{zndiv4}Le déterminant de la limite $\Zinftyu{\JJ}{\cB}{N}$ est égal à
$\prod_{k=1}^{d} \mxiinf{N}{\inclus{\III{t_{\HT,k}}}{\JJ}}$
à une unité près de $\cH$.
\end{thm}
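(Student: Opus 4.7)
L'approche consiste à traiter séparément les trois assertions du théorème, la convergence en constituant le c\oe ur technique. Pour la congruence \eqref{zndiv1}, on écrit la différence sous la forme
\[
\ZZZ{\JJ}{\cB}{n+1}{N} - \ZZZ{\JJ}{\cB}{n}{N} = \phiun^{n+2}\bigl(\TH{\Delta_\cB}{n+1}{\JJ} - \id\bigr)\phiun^{-(n+2)}\,\ZZZ{\JJ}{\cB}{n}{N}.
\]
La proposition \ref{const:norme}, appliquée à l'indice $n+1$, assure que $\txin{n+1}{\inclus{\III{i}}{\JJ}} \equiv 1 \bmod \bomega{n}{\JJ}$ pour tout poids de Hodge-Tate $i$ de $\DD$. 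En décomposant tout élément de $\cH\otimes\DD$ selon la base $\cB$ et la graduation $\Delta_\cB$ associée, on voit que $\TH{\Delta_\cB}{n+1}{\JJ} - \id$ envoie $\cH\otimes\DD$ dans $\bomega{n}{\JJ}(\cH\otimes\DD)$ ; comme $\phiun^{\pm(n+2)}$ n'agit que sur le facteur $\DD$, la divisibilité par $\bomega{n}{\JJ}$ est préservée, et \eqref{zndiv1} en découle.

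Pour la convergence, point délicat de la démonstration, on montre que pour chaque $\rho<1$ la suite $(\ZZZ{\JJ}{\cB}{n}{N}(v))_n$ est de Cauchy pour la norme $\norm{\cdot}{\rho}$. La proposition \ref{prop:majoration} fournit une décroissance géométrique de $\norm{\TH{\Delta_\cB}{n+1}{\JJ} - \id}{\rho}$ en $p^{-(n+1-n_0(\rho))\lgg{\JJ}}$, tandis que les conjugaisons par $\phiun^{\pm(n+2)}$ font croître la norme d'opérateur d'au plus $p^{(n+2)(t_{\Ne,d}-t_{\Ne,1})}$. L'hypothèse $\JJ\supset\IIIprim{t_{\HT,1}}$ et la définition de $\newtH$ donnent $\lgg{\JJ}\geq \newtH - t_{\HT,1} > t_{\Ne,d} - t_{\Ne,1}$, de sorte que la décroissance domine. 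Un argument de bootstrap sur $\norm{\ZZZ{\JJ}{\cB}{n}{N}(v)}{\rho}$ par produit télescopique assure à la fois la bornitude de cette suite et la convergence géométrique vers $0$ des différences successives, d'où l'existence de la limite $\Zinftyu{\JJ}{\cB}{N}(v)$ dans $\cH\otimes\DD$.

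Pour le déterminant, on calcule dans la base $\cB$ : la matrice de $\TH{\Delta_\cB}{n}{\JJ}$ y est diagonale, de coefficients diagonaux $\txin{n}{\inclus{\III{t_{\HT,k}}}{\JJ}}$ pour $k=1,\dots,d$, et les déterminants des conjugaisons $\phiun^{\pm(n+1)}$ se compensent. La récurrence fournit donc
\[
\det \ZZZ{\JJ}{\cB}{n}{N} = \prod_{k=1}^d \prod_{m=N+1}^{n}\txin{m}{\inclus{\III{t_{\HT,k}}}{\JJ}},
\]
dont le passage à la limite s'identifie à $\prod_{k=1}^d\mxiinf{N}{\inclus{\III{t_{\HT,k}}}{\JJ}}$ à une unité de $\cH$ près, conformément aux conventions de produit de la proposition \ref{xin}. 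L'obstacle principal est clairement la convergence : il faut coupler avec précision les estimées de la proposition \ref{prop:majoration} (selon les régimes $n\leq n_0(\rho)$ et $n\geq n_0(\rho)$) à un contrôle de la croissance de $\phiun^{-n}$, la condition $\lgg{\JJ}>t_{\Ne,d}-t_{\Ne,1}$ étant ici cruciale ; les parties \eqref{zndiv1} et le calcul du déterminant sont alors essentiellement formels.
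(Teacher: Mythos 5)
Your argument is correct and follows the same route as the paper's own proof, step for step: the congruence is established via the identity $\ZZZ{\JJ}{\cB}{n+1}{N}-\ZZZ{\JJ}{\cB}{n}{N}=\phiun^{n+2}\bigl(\TH{\Delta_\cB}{n+1}{\JJ}-\id\bigr)\phiun^{-(n+2)}\ZZZ{\JJ}{\cB}{n}{N}$ together with the congruence $\txin{n+1}{\inclus{\III{i}}{\JJ}}\equiv 1 \bmod \bomega{n}{\JJ}$ from Proposition \ref{const:norme}; convergence comes from the competition between the $p^{-(n-n_0(\rho))\lgg{\JJ}}$ decay of Proposition \ref{prop:majoration} and the polynomial growth of $\norm{\varphi^{\np}}{\cB}\norm{\varphi^{\nm}}{\cB}$, won under the hypothesis $\lgg{\JJ}\geq\newtH-t_{\HT,1}>t_{\Ne,d}-t_{\Ne,1}$; and the determinant is computed in the base $\cB$ where the twist operators are diagonal and the determinants of $\phiun^{\pm(n+1)}$ cancel. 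This is precisely what the paper does; there is no genuinely different idea or lemma involved.
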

On aurait pu indexer les objets introduits par $h$ plutôt que par $\JJ$
 avec $h\geq \newtH$ ($J=\II{t_{\HT,1}}{h}$, voir \cite{bpr-debut}).
\begin{proof}[Démonstration du théorème \ref{zndiv}]
Montrons l'équation \eqref{zndiv1}. On a pour $n>N$
\begin{equation}\label{zn:convergence}
\ZZZ{\JJ}{\cB}{n}{N} -\ZZZ{\JJ}{\cB}{n-1}{N}
  =\phiun^{\np} (\TH{\Delta_\cB}{n}{\JJ} - \id)
  \phiun^{\nm} \ZZZ{\JJ}{\cB}{n-1}{N}.
\end{equation}
Sur $\Delta_\cB^i$ non réduit à $\{0\}$ (on a alors $i=t_{\HT,j}$ pour un $j$ compris
entre 1 et $d$), $\TH{\Delta_\cB}{n}{\JJ} - \id$
est la multiplication par
$\txin{n}{\inclus{\III{i}}{\JJ}}-1$ et
est divisible par $\bomega{n-1}{\JJ}$.
Donc,
$$\ZZZ{\JJ}{\cB}{n}{N} \equiv \ZZZ{\JJ}{\cB}{n-1}{N} \bmod \bomega{n-1}{\JJ}\ ,$$
d'où l'équation \eqref{zndiv1}.
Pour $0<\rho<1$, on a
\begin{equation*}
\norm{\TH{\Delta_\cB}{n}{\JJ} - \id}{\cB,\rho} =
  \max_i\norm{\txin{n}{\inclus{\III{t_{\HT,i}}}{\JJ}}-1}{\rho}.
\end{equation*}
En appliquant la proposition \ref{prop:majoration}, pour $n \geq n_0(\rho)$,
\begin{equation}
\label{majoration}
\norm{\TH{\Delta_\cB}{n}{\JJ} - \id}{\cB,\rho} \leq
p^{\frac{p-2}{p-1}\lgg{\JJ}+\tborne{\lgg{\JJ}}+t_{\HT,d}-t_{\HT,1}} p^{-(n-n_0(\rho))\lgg{\JJ}}.
\end{equation}
Donc, pour $n\geq n_0(\rho)$,
\begin{equation*}
\begin{split}
\norm{\ZZZ{\JJ}{\cB}{n}{N}-\ZZZ{\JJ}{\cB}{n-1}{N}}{\rho}
&\leq \norm{\varphi^{\np}}{\cB}\norm{\varphi^{\nm}}{\cB}
\norm{\TH{\Delta_\cB}{n}{\JJ}-\id}{\rho}\norm{\ZZZ{\JJ}{\cB}{n-1}{N}}{\rho}
\\
&\leq \norm{\varphi^{\np}}{\cB}\norm{\varphi^{\nm}}{\cB}
p^{\frac{p-2}{p-1}\lgg{\JJ}+\tborne{\lgg{\JJ}}+t_{\HT,d}-t_{\HT,1}} p^{-(n-n_0(\rho))\lgg{\JJ}}\norm{\ZZZ{\JJ}{\cB}{n-1}{N}}{\rho}.
\end{split}
\end{equation*}
On a
$$\lim_n \frac{\ord_p(\norm{\varphi^{\np}}{\cB})}{n}=-t_{\Ne,1},\quad
\lim_n\frac{\ord_p(\norm{\varphi^{\nm}}{\cB})}{n}=t_{\Ne,d}.$$
On en déduit que pour $n$ assez grand,
\begin{equation*}
\norm{\ZZZ{\JJ}{\cB}{n}{N}-\ZZZ{\JJ}{\cB}{n-1}{N}}{\rho}
\leq C(\rho) p^{-(n-n_0(\rho))(\lgg{\JJ}-t_{\Ne,d}+t_{\Ne,1})}
\norm{\ZZZ{\JJ}{\cB}{n-1}{N}}{\rho}.
\end{equation*}
Par hypothèse, $\lgg{\JJ}\geq \newtH-t_{\HT,1}>t_{\Ne,d}-t_{\Ne,1}$. Donc, pour $n$ assez grand, on a
$$\norm{\ZZZ{\JJ}{\cB}{n}{N}}{\rho}\leq
\max(\norm{\ZZZ{\JJ}{\cB}{n}{N}-\ZZZ{\JJ}{\cB}{n-1}{N}}{\rho},\norm{\ZZZ{\JJ}{\cB}{n-1}{N}}{\rho})
\leq \norm{\ZZZ{\JJ}{\cB}{n-1}{N}}{\rho}$$
et la suite
$\norm{\ZZZ{\JJ}{\cB}{n}{N}}{\rho}$ est bornée par rapport à $n$.

Le déterminant de $\ZZZ{\JJ}{\cB}{n}{N}$ est égal à
$ \prod_{s=1}^{d}\left (\prod_{m=N}^n \txin{m}{\inclus{\II{t_{\HT,s}}{t_{\HT,d}}}{\JJ}}\right )$.
En passant à la limite, on en déduit que le déterminant de $\Zinftyu{\JJ}{\cB}{N}$
est égal à
$ \prod_{s=1}^{d}\mxiinf{N}{\inclus{\II{t_{\HT,s}}{t_{\HT,d}}}{\JJ}}$.
\end{proof}
\begin{thm}\label{zndivbis}
\label{zndiv3}Le sous-$\Lambda\ot$-module
$\Module{\JJ}{\cB}{N}{\Lambda\ot\otimes\DD}$\index{$\Module{}{\cB}{N}{\Lambda\ot\otimes\DD}$}
  de $\cH\oEE \otimes\oEE \DD$ engendré par l'image de $\DD$ par
  $\Zinftyu{\JJ}{\cB}{N}$ est contenu dans $\LLL{}{N}{\DD}$.
Sa pente $\slope{\Module{\JJ}{\cB}{N}{\DD}}$
vérifie
$$t_{\Ne,1}-t_{\HT,d}\leq
\slope{\Module{\JJ}{\cB}{N}{\DD}} \leq
\ord_p(\norm{\varphi^{-1}}{\cB})-t_{\HT,1}+ \tborne{\lgg{\JJ}}.$$
\end{thm}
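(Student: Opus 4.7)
For the inclusion $\Module{\JJ}{\cB}{N}{\DD} \subset \LLL{}{N}{\DD}$, first I would verify the defining condition on a generator $\Zinftyu{\JJ}{\cB}{N}(v)$ with $v \in \cB$. Fix $\zeta$ of order $p^n$ with $n \geq N$ and $j \leq t_{\HT,d}$. By the congruence \eqref{zndiv1}, $\Zinftyu{\JJ}{\cB}{N}(v)(u^j\zeta-1) = \ZZZ{\JJ}{\cB}{n}{N}(v)(u^j\zeta-1)$. Unfolding one step of the recursion and decomposing $\phiun^{-(n+1)}\ZZZ{\JJ}{\cB}{n-1}{N}(v)$ along the graduation $\Delta_\cB$, the operator $\TH{\Delta_\cB}{n}{\JJ}$ multiplies the component in $\Delta_\cB^{t_{\HT,i}}$ by $\txin{n}{\inclus{\III{t_{\HT,i}}}{\JJ}}$, which vanishes at $u^j\zeta-1$ whenever $j > t_{\HT,i}$. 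Only the pieces with $t_{\HT,i} \geq j$ survive; they lie in $K_n \otimes \Fil^{t_{\HT,i}}\DD \subseteq K_n \otimes \Fil^j\DD$, and applying $\phiun^{n+1}$ yields an element of $\phiun^{n+1}(K_n \otimes \Fil^j\DD)$, as required. Stability under multiplication by $\Lambda\ot$ is immediate since the defining condition is $K_n$-linear.

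For the upper bound on the slope, I would unroll the recursion: for $n \geq m$,
\begin{equation*}
\phiun^{-m}\ZZZ{\JJ}{\cB}{n}{N} = \phiun^{\,n+1-m}\,\TH{\Delta_\cB}{n}{\JJ}\,\phiun^{-1}\cdots\phiun^{-1}\,\TH{\Delta_\cB}{N+1}{\JJ}\,\phiun^{-(N+1)}.
\end{equation*}
By Proposition \ref{const:norme}, each $\TH{\Delta_\cB}{k}{\JJ}$ has operator norm (in the basis $\cB$) on a disk $B(\rho)$ for $\rho<1$ and $k$ large enough bounded by $p^{(t_{\HT,d}-t_{\HT,1})+\tborne{\lgg{\JJ}}}$, while each $\phiun^{-1}$ contributes at most $\norm{\varphi^{-1}}{\cB}$. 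Specialising to $\rho=\rho_0^{1/p^m}$ and taking $n=m$, then passing to the limit $n\to\infty$ (the tail being controlled by the speed-of-convergence estimate \eqref{majoration}), the norm of $\phiun^{-m}\Zinftyu{\JJ}{\cB}{N}(v)$ on this disk is majorized by a constant multiple of $p^{m(t_{\HT,d}-t_{\HT,1}+\tborne{\lgg{\JJ}})}\norm{\varphi^{-1}}{\cB}^m$. Multiplying by $p^{(s+t_{\HT,d})m}$ produces a factor $p^{m(-s-t_{\HT,1}+\tborne{\lgg{\JJ}}+\ord_p(\norm{\varphi^{-1}}{\cB}))}$, bounded in $m$ as soon as $s \geq \ord_p(\norm{\varphi^{-1}}{\cB}) - t_{\HT,1} + \tborne{\lgg{\JJ}}$.

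For the lower bound, it suffices to show that every non-zero element of $\cH \otimes \DD$ has slope at least $t_{\Ne,1}-t_{\HT,d}$. Decomposing $\varphi$ into generalized eigenspaces (after a finite extension of scalars if necessary), write a non-zero $g = \sum_i f_i v_i$ with $v_i$ of Newton slope $t_{\Ne,i}$; then
\begin{equation*}
\norm{p^{(s+t_{\HT,d})m}\,\phiun^{-m}(f_iv_i)}{\rho^{1/p^m}} = p^{m(t_{\Ne,i}-s-t_{\HT,d})}\,\norm{f_i}{\rho^{1/p^m}},
\end{equation*}
whose boundedness for each non-zero $f_i$ forces $s \geq t_{\Ne,i}-t_{\HT,d} + \ord(f_i) \geq t_{\Ne,1}-t_{\HT,d}$, using that the order of a non-zero element of $\cH$ is non-negative. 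Consequently $\Er{\Module{\JJ}{\cB}{N}{\DD}}{t} = 0$ whenever $t < t_{\Ne,1}-t_{\HT,d}$, so $\slope{\Module{\JJ}{\cB}{N}{\DD}} \geq t_{\Ne,1}-t_{\HT,d}$. The main obstacle is the upper bound: since $\cB$ is only assumed adapted to the filtration and not to $\varphi$, extracting the precise constant $-t_{\HT,1}$ (rather than $-t_{\HT,d}$) requires careful bookkeeping of the filtration of $v \in \Fil^{t_H(v)}\DD$ with $t_H(v)\geq t_{\HT,1}$, combined with the alternating product of $\phiun^{-1}$'s and $\TH{\Delta_\cB}{k}{\JJ}$'s in the unrolled expression.
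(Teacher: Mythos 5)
Your proposal is correct and follows essentially the same strategy as the paper. The inclusion argument (evaluating $\phiun^{\nm}\Zinftyu{\JJ}{\cB}{N}(u^j\zeta-1)$ via \eqref{zndiv1}, observing that $\TH{\Delta_\cB}{n}{\JJ}(u^j\zeta-1)$ annihilates $\Delta_\cB^i$ for $i<j$) and the upper-bound computation (unrolling into $B_{k,\cB}=\TH{\Delta_\cB}{k}{\JJ}\circ\phiun^{-1}$, bounding $\norm{\TH{\Delta_\cB}{k}{\JJ}}{\cB,1}$ by $p^{t_{\HT,d}-t_{\HT,1}+\tborne{\lgg{\JJ}}}$ via Proposition~\ref{const:norme}, controlling the tail via~\eqref{majoration}) reproduce the paper's proof. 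For the lower bound you argue a little differently: you show that \emph{every} non-zero element of $\cH\otimes\DD$ has slope $\geq t_{\Ne,1}-t_{\HT,d}$ by a spectral decomposition, whereas the paper bounds the operator norm of $\Zinftyu{\JJ}{\cB}{N}$ directly using $\norm{p^{-s_n}\Zinftyu{\JJ}{\cB}{N}}{\cB,\rho_n}\leq\norm{\phiun^{\nm}\Zinftyu{\JJ}{\cB}{N}}{\cB,\rho_n}$; both exploit the same fact about the growth of $\norm{\varphi^{\nm}}{\cB}$, and your version is arguably cleaner, though you should note that when $\varphi$ is not semisimple the generalized-eigenspace decomposition introduces polynomial-in-$m$ factors (these do not affect the conclusion). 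Finally, the worry expressed in your closing paragraph is unfounded: you have \emph{already} extracted the constant $-t_{\HT,1}$, and it required no further bookkeeping — it emerges from the norm bound $\norm{\TH{\Delta_\cB}{k}{\JJ}}{\cB,1}\leq p^{t_{\HT,d}-t_{\HT,1}+\tborne{\lgg{\JJ}}}$, which only uses that $\cB$ is adapted to the filtration; the stronger hypothesis that $\cB$ be adapted to $\varphi$ is reserved for the sharper Theorem~\ref{thm:smith}.
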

\begin{rem}
Le module $\Module{\JJ}{\cB}{N}{\DD}$ se comporte bien par twist
et les encadrements trouvés sont bien invariants par twist.
\end{rem}
\begin{proof}
Pour montrer que l'image de $\Zinftyu{\JJ}{\cB}{N}$ est contenue
dans $\LLL{}{N}{\DD}$,
calculons $\phiun^{\nm} \Zinftyu{\JJ}{\cB}{N}(u^{j}\zeta - 1)$
pour $n \geq N$, $\zeta$ une racine de l'unité
d'ordre $p^n$ et $j$ appartenant à $]t_{\HT,1},t_{\HT,d}]$:
\begin{equation*}
\begin{split}
\phiun^{\nm} \Zinftyu{\JJ}{\cB}{N}(u^{j}\zeta-1)&=
\phiun^{\nm} \ZZZ{\JJ}{\cB}{n}{N}(u^{j}\zeta-1)\\
&=
 \TH{\Delta_\cB}{n}{\JJ}(u^{j}\zeta-1)
 \phiun^{\nm}\ZZZ{\JJ}{\cB}{n-1}{N}(u^{j}\zeta-1)
\end{split}
\end{equation*}
Pour $i\in \{t_{\HT,1},\cdots,t_{\HT,d}\}$ et $t_{\HT,1}< j\leq t_{\HT,d}$,
la restriction de $\TH{\Delta_\cB}{n}{\JJ}(u^{j}\zeta-1)$ à $\Delta_\cB^{i}$ est
$\txin{n}{\inclus{\III{{i}}}{\JJ}}(u^{j}\zeta-1)\id_{\Delta_\cB^i}$.
Pour $j>i$, $\III{i}$ contient $j$,
$\txin{n}{\inclus{\III{i}}{\JJ}}(u^{j}\zeta-1)$ est nul.
Donc $\TH{\Delta_\cB}{n}{\JJ}(u^{j}\zeta-1){| \Delta_\cB^i}$ est nul.
L'image de $\TH{\Delta_\cB}{n}{\JJ}(u^{j}\zeta-1)$ est donc contenue dans
$K_n\otimes \left(\oplus_{i\geq j} \Delta_\cB^i\right)=K_n\otimes \Fil^j \DD$.
Il en est de même de l'image de
$\phiun^{\nm} \Zinftyu{\JJ}{\cB}{N}(u^{j}\zeta-1)$.

Montrons les assertions sur la pente.
Prenons $\rho=\rho_0$
et $\rho_n=\rho_0^{1/p^n}$.
Dans ce qui suit, les constantes introduites ne dépendent pas de $n$.
On a
$$\phiun^{\nm}\Zinftyu{\JJ}{\cB}{N} = \phiun^{\nm}\ZZZ{\JJ}{\cB}{n}{N} +
\sum_{m=n}^\infty \phiun^{\nm}(\ZZZ{\JJ}{\cB}{m+1}{N}-\ZZZ{\JJ}{\cB}{m}{N}).$$
Posons $B_{n,\cB}=\TH{\Delta_\cB}{n}{\JJ} \circ\phiun^{-1}$.
On a pour $n\geq N$
 \begin{equation*}
\begin{split}
\phiun^{\nm}\ZZZ{\JJ}{\cB}{n}{N}&=B_{n,\cB} \cdots B_{N,\cB}
\\
\phiun^{\nm}\left(\ZZZ{\JJ}{\cB}{n}{N}-\ZZZ{\JJ}{\cB}{n-1}{N}\right)&=
  \left(\TH{\Delta_\cB}{n}{\JJ} - \id\right)
\phiun^{-1}B_{n-1,\cB} \cdots B_{N,\cB}
.\end{split}
\end{equation*}
D'après la proposition \ref{prop:majoration},
il existe un entier $\gamma(J)$ tel que
$\norm{\txin{m}{\inclus{\III{i}}{\JJ}}}{\rho_n} \leq 1$
pour $m \geq \gamma(J) + n_0(\rho)$.
Pour $n > n_0(\rho)$, on a
\begin{equation*}
\begin{split}
\norm{B_{n-1,\cB} \cdots B_{n_0(\rho),\cB}}{\cB,\rho}
&\leq \left(\prod_{m=n_0(\rho)}^{n_0(\rho)+\gamma(\JJ)-1}
\max_i\norm{\txin{m}{\inclus{\III{i}}{\JJ}}}{\rho}\right)
\norm{\varphi^{-1}}{\cB}^{n-n_0(\rho)}\\
&\leq p^{(\tborne{\lgg{\JJ}}+t_{\HT,d}-t_{\HT,1})\gamma(\JJ)}
\norm{\varphi^{-1}}{\cB}^{n-n_0(\rho)}
\end{split}
\end{equation*}
et
$$\norm{B_{n-1,\cB} \cdots B_{1,\cB}}{\cB,\rho}
\leq p^{(\tborne{\lgg{\JJ}}+t_{\HT,d}-t_{\HT,1})(\gamma(\JJ)+n_0(\rho)}
\norm{\varphi^{-1}}{{\cB}}^{n}
.$$
On a donc pour $m \geq n$,
\begin{equation*}
\begin{split}
\norm{\phiun^{\nm}(\ZZZ{\JJ}{\cB}{m+1}{N}-\ZZZ{\JJ}{\cB}{m}{N}) }{\cB,\rho_n}
&\leq C_4 \norm{\varphi^{m-n}}{\cB}
\norm{\TH{\Delta_\cB}{m}{\JJ}-\id}{\cB,\rho_n}\norm{\varphi^{-1}}{\cB}^{m-n}\norm{B_{n,\cB}
\cdots B_{1,\cB}}{\cB,\rho_n}\\
&\leq C_5 p^{-(m-n)(\lgg{\JJ} - \ord_p(\norm{\varphi}{}\norm{\varphi^{-1}}{}))}
\norm{B_{n,\cB} \cdots B_{1,\cB}}{\cB,\rho_n}\\
&\leq C_5\norm{B_{n,\cB} \cdots B_{1,\cB}}{\cB,\rho_n}= C_5
\norm{\phiun^{\nm}\ZZZ{\JJ}{\cB}{n}{N}}{\cB,\rho_n}.\end{split}
\end{equation*}
Il reste donc à majorer la norme de
$
\phiun^{\nm}\ZZZ{\JJ}{\cB}{n}{N} = B_{n,\cB} \cdots B_{1,\cB}
$. On déduit de la proposition \ref{const:norme} et de l'écriture
$B_{k,\cB}=
\TH{\Delta_\cB}{k}{\JJ}\circ\phiun^{-1}$
que
$$\norm{B_{k,\cB}}{\cB,1} \leq p^{\tborne{\lgg{\JJ}}+t_{\HT,d}-t_{\HT,1}}
\norm{\varphi^{-1}}{\cB}.$$
Donc,
 \begin{equation*}
\norm{\phiun^{\nm}\Zinftyu{\JJ}{\cB}{N}}{\cB,\rho_n}
\leq C_5 p^{n\left(\tborne{\lgg{\JJ}}+t_{\HT,d}-t_{\HT,1}
+\ord_p(\norm{\varphi^{-1}}{\cB})\right)}.
\end{equation*}
La suite
$\norm{p^{n \left(\tborne{\lgg{\JJ}}+t_{\HT,d}-t_{\HT,1}+\ord_p(\norm{\varphi^{-1}}{\cB})\right)}
 \phiun^{-n}\Zinftyu{\JJ}{\cB}{N}}{\cB,\rho_n}$ est bornée.
Donc le sous-module $\Module{\JJ}{\cB}{N}{\DD}$
de $\cH\otimes \DD$ est de pente inférieure ou égale à
$
\ord_p(\norm{\varphi^{-1}}{\cB})-t_{\HT,1}+\tborne{\lgg{\JJ}}
$.

La pente de $\Zinftyu{\JJ}{\cB}{N}$ est supérieure à
$t_{\Ne,1}-t_{\HT,d}$.
En effet, si $\norm{\varphi^{\np}}{\cB} =p^{-s_n}$,
$s_n$ est le plus grand rationnel tel que $\varphi^{\np} M_{\cB} \subset p^{s_n} M_{\cB}$,
ce qui est équivalent à $p^{-s_n} M_{\cB} \subset \varphi^{\nm} M_{\cB}$.
On a alors
$$\norm{p^{-s_n}\Zinftyu{\JJ}{\cB}{N}}{\cB,\rho_n}
\leq \norm{\phiun^{\nm}\Zinftyu{\JJ}{\cB}{N}}{\cB,\rho_n}.$$
Soit $h$ un entier tel que les
$\norm{p^{hn}\phiun^{\nm}\Zinftyu{\JJ}{\cB}{N}}{\cB,\rho_n}
$ sont bornés. Alors, les $\norm{p^{(h-s)n}\Zinftyu{\JJ}{\cB}{N}}{\cB,\rho_n}$
sont bornés avec $s=\limsup\frac{s_n}{n}=-t_{\Ne,1}$.
Comme $\Zinftyu{\JJ}{\cB}{N}(\DD)$ est contenu dans $\cH\otimes \DD$,
on a $h-s\geq 0$. Par définition de la pente,
cela implique que la pente de $\Zinftyu{\JJ}{\cB}{N}$ est supérieure à
$t_{\Ne,1}-t_{\HT,d}$. On passe facilement à l'assertion sur la pente de
$\Module{\JJ}{\cB}{N}{\DD}$.
\end{proof}
Sous une hypothèse supplémentaire sur $\cB$, il est possible de préciser
la pente.
\begin{thm}\label{thm:smith}
Supposons que la base $\cB$ est adaptée au $\varphi$-module filtré $\DD$
et soient $\sm{1},\cdots, \sm{d}$ les invariants de Smith du
$\oZE$-module engendré par $\cB$.
Alors, la pente $\slope{\Module{\JJ}{\cB}{N}{\DD}}$
du $\Lambda_{\oZE}$-module $\Module{\JJ}{\cB}{N}{\DD}$
vérifie
$$
{\min}_{1\leq i \leq d} (\sm{i} -t_{\HT,i})
\leq \slope{\Module{\JJ}{\cB}{N}{\DD}} \leq
\max_{1\leq i \leq d} (\sm{i}-t_{\HT,i})+ \tborne{\lgg{\JJ}}.$$
\end{thm}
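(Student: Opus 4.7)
The plan is to refine the estimates of Theorem~\ref{zndiv3} by exploiting the fact that, since $\cB$ is adapted to the whole $\varphi$-module filtr�, the matrix of $\varphi$ in $\cB$ takes the form $VF$ with $V\in\Gl_d(\Zp)$ and $F=\mathrm{diag}(p^{\sm{1}},\dots,p^{\sm{d}})$, while $t_{\HT,\cB}$ is diagonal with entries $p^{t_{\HT,i}}$.

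The key algebraic step will be the factorisation
$$B_{k,\cB}=\TH{\Delta_\cB}{k}{\JJ}\circ\phiun^{-1}=A_{k,\cB}\circ\psi^{-1},\qquad A_{k,\cB}:=\TH{\Delta_\cB}{k}{\JJ}\circ t_{\HT,\cB}^{-1},\qquad \psi:=\phiun\circ t_{\HT,\cB}^{-1},$$
which is legitimate since $\TH{\Delta_\cB}{k}{\JJ}$ and $t_{\HT,\cB}^{-1}$ are both diagonal on the graduation $\Delta_\cB^{\bullet}$ and therefore commute. I would then bound the two factors uniformly. The operator $A_{k,\cB}$ acts on $\Delta_\cB^{t_{\HT,i}}$ as multiplication by $p^{-t_{\HT,i}}\txin{k}{\inclus{\III{t_{\HT,i}}}{\JJ}}$; combining Proposition~\ref{const:norme}, which gives $\norm{\txin{k}{\inclus{\III{t_{\HT,i}}}{\JJ}}}{1}\leq p^{(t_{\HT,d}-t_{\HT,i})+\tborne{\lgg{\JJ}}}$, with the compensating $p^{-t_{\HT,i}}$ yields the \emph{$i$-independent} bound
$$\norm{A_{k,\cB}}{\cB,1}\leq p^{t_{\HT,d}+\tborne{\lgg{\JJ}}}.$$
On the other hand, the matrix of $\psi^{-1}$ in $\cB$ is $\mathrm{diag}(p^{t_{\HT,i}-\sm{i}})\cdot V^{-1}$, whence
$$\norm{\psi^{-1}}{\cB,1}\leq p^{\max_{1\leq i\leq d}(\sm{i}-t_{\HT,i})}.$$

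Substituting these two estimates into the argument of Theorem~\ref{zndiv3} (in place of $\norm{\varphi^{-1}}{\cB}$ and of the cruder bound on $\norm{\TH{\Delta_\cB}{k}{\JJ}}{\cB,1}$) delivers the upper bound
$$\slope{\Module{\JJ}{\cB}{N}{\DD}}\leq \max_{1\leq i\leq d}(\sm{i}-t_{\HT,i})+\tborne{\lgg{\JJ}}.$$
For the lower bound I would re-run the last part of the proof of Theorem~\ref{zndiv3} with $\psi$ in the r�le of $\varphi$: the inclusion $\psi(v_j)=p^{\sm{j}-t_{\HT,j}}\sum_i V_{ij}v_i\in p^{\sm{j}-t_{\HT,j}}M_\cB$ yields $\psi^{n}(M_\cB)\subset p^{n\min_i(\sm{i}-t_{\HT,i})}M_\cB$, hence a sharpening of the comparison $\norm{p^{-s_n}\Zinftyu{\JJ}{\cB}{N}}{\cB,\rho_n}\leq\norm{\phiun^{\nm}\Zinftyu{\JJ}{\cB}{N}}{\cB,\rho_n}$ used in \ref{zndiv3}, which gives $\slope{\Module{\JJ}{\cB}{N}{\DD}}\geq \min_{1\leq i\leq d}(\sm{i}-t_{\HT,i})$. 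The delicate step in this last argument is to control the non-commutative product $\phiun^{-n}=(t_{\HT,\cB}^{-1}\psi^{-1})^n$: one must certify that the mixing introduced by the unipotent piece $V^{-1}\in\Gl_d(\Zp)$, which contributes only units, does not destroy the diagonal lower bound $n\min_i(\sm{i}-t_{\HT,i})$ carried by the Smith factors of $\psi$. This asymmetry between upper and lower bounds is also what explains the extra $\tborne{\lgg{\JJ}}$ on the right but not on the left.
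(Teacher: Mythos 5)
Your upper bound argument is correct and, while organized differently from the paper, gives the same estimate. The paper does not introduce the operator $\psi=\phiun\circ t_{\HT,\cB}^{-1}$ at all: it simply writes the matrix of $\varphi^{-1}$ in $\cB$ as $\mathrm{diag}(p^{-\sm{1}},\dots,p^{-\sm{d}})U$ with $U\in\Gl_d(\oZE)$, so that $B_{n,\cB}$ has matrix $\mathrm{diag}(p^{-\sm{i}}\txin{n}{\inclus{\III{t_{\HT,i}}}{\JJ}})U$, and estimates $\norm{B_{n,\cB}}{\cB,1}$ directly from the identity $\norm{DU}{1}=\norm{D}{1}$ valid for $D$ diagonal and $U\in\Gl_d(\oZE)$, then feeds this into the slope computation from Theorem~\ref{zndivbis}. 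Your factorisation $B_{k,\cB}=A_{k,\cB}\circ\psi^{-1}$, with $A_{k,\cB}$ and $t_{\HT,\cB}$ commuting because both are diagonal on $\Delta_\cB^{\bullet}$, is a legitimate repackaging of the same estimate; the $i$-independent bound $\norm{A_{k,\cB}}{\cB,1}\leq p^{t_{\HT,d}+\tborne{\lgg{\JJ}}}$ and the bound $\norm{\psi^{-1}}{\cB,1}=p^{\max_i(\sm{i}-t_{\HT,i})}$ are both correct, and submultiplicativity reproduces the paper's upper bound.

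The lower bound is where the proposal has a genuine gap, and you say so yourself. The slope is defined via $\phiun^{-n}$, not $\psi^{-n}$; your clean containment $\psi^{n}(M_\cB)\subset p^{n\min_i(\sm{i}-t_{\HT,i})}M_\cB$ controls $\norm{\psi^{-(n+1)}G}{\rho_n}$ from below, but what you actually need to control is $\norm{\phiun^{-(n+1)}G}{\rho_n}$, and $\phiun^{-(n+1)}=(t_{\HT,\cB}^{-1}\psi^{-1})^{n+1}$ is not $t_{\HT,\cB}^{-(n+1)}\psi^{-(n+1)}$ because $t_{\HT,\cB}$ and $\psi^{-1}$ do not commute once $V^{-1}$ is off-diagonal. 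The paper sidesteps $\psi$ entirely: it records both a lower and an upper bound on $\norm{B_{n,\cB}}{\cB,1}$ from the factorisation $\mathrm{diag}(p^{-\sm{i}}\txin{n}{\cdot})U$ together with the two-sided estimate $p^{\lgg{\Jp}}\leq\norm{\txin{n}{\inclus{\Jp}{\JJ}}}{1}\leq p^{\lgg{\Jp}+\tborne{\lgg{\JJ}}}$ of Proposition~\ref{const:norme}, and then reruns the slope argument of Theorem~\ref{zndivbis}. So while naming the difficulty is good diagnostics, your sketch does not actually supply the mechanism (the two-sided matrix-norm estimate in the fixed base $\cB$, propagated through the product $B_n\cdots B_N$) that the paper uses; without it the claimed "sharpening of the comparison" is not justified, and the lower bound remains unproved in your write-up.
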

\begin{proof}On reprend les notations de la démonstration des théorèmes précédents.
Dans la base $\cB$, la matrice de $\varphi^{-1}$ est de la forme
$diag(p^{-\sm{1}},\cdots, p^{-\sm{d}}) U$ avec $U\in\Gl_d(\oZE)$.
La matrice de $B_{n,\cB}$ est
$$
diag(p^{-\sm{1}}\txin{n}{\inclus{\III{t_{\HT,1}}}{\JJ}},\cdots,
p^{-\sm{d}}\txin{n}{\inclus{\III{t_{\HT,d}}}{\JJ}}) U$$
et
$$
p^{\min_{1\leq i\leq d} (\sm{i}-t_{\HT,i}+t_{\HT,d})}\leq \norm{B_{n,\cB}}{\cB,1}
\leq p^{\max_{1\leq i\leq d} (\sm{i}-t_{\HT,i}+t_{\HT,d}+\tborne{\lgg{\JJ}})}
$$
(si $D$ est une matrice diagonale, on a $\norm{DU}{1}=\norm{D}{1}$).
En reprenant le calcul de la pente dans la démonstration précédente, on en déduit
le théorème.
\end{proof}
\begin{cor}\label{cor:divisible}
Si $\DD$ est un $\varphi$-module filtré faiblement admissible et $\cB$
une base adaptée au $\varphi$-module filtré engendrant un réseau
fortement divisible,
on a
$$0\leq \slope{\Module{\JJ}{\cB}{N}{\DD}} \leq \tborne{\lgg{\JJ}}.$$
En particulier, si $\tborne{\lgg{\JJ}}=0$, $\slope{\Module{\JJ}{\cB}{N}{\DD}}$
est nul.
\end{cor}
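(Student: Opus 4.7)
The plan is to reduce the statement directly to Theorem \ref{thm:smith} by computing the Smith invariants $\sm{i}$ of the strongly divisible lattice $M$ generated by $\cB$.

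First, I would invoke the characterization of strongly divisible lattices established just before Théorème de Laffaille: since $\cB$ is a base adapted to the filtered $\varphi$-module and generates a strongly divisible lattice $M$, we have $\varphi \circ t_{\HT,\cB}^{-1}(M) = M$. Equivalently, the matrix of $\varphi$ in $\cB$ equals the diagonal matrix with entries $p^{t_{\HT,i}}$ multiplied by an element of $\Gl_d(\Zp)$. Thus the Smith invariants of $\varphi$ on $M$ are exactly the $p^{t_{\HT,i}}$, which means $\sm{i} = t_{\HT,i}$ for every $1 \leq i \leq d$.

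Substituting this equality into the encadrement furnished by Théorème \ref{thm:smith},
\begin{equation*}
\min_{1\leq i \leq d}(\sm{i} - t_{\HT,i}) \leq \slope{\Module{\JJ}{\cB}{N}{\DD}} \leq \max_{1\leq i \leq d}(\sm{i} - t_{\HT,i}) + \tborne{\lgg{\JJ}},
\end{equation*}
collapses both the minimum and the maximum to $0$, yielding the desired bounds $0 \leq \slope{\Module{\JJ}{\cB}{N}{\DD}} \leq \tborne{\lgg{\JJ}}$. The final assertion (the case $\tborne{\lgg{\JJ}} = 0$) is then a tautology.

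This is essentially a one-line corollary; there is no genuine obstacle. The only point requiring slight attention is making sure to quote the right characterization of strongly divisible lattices — the one in terms of $\varphi \circ t_{\HT,\cB}^{-1}$ being an automorphism of $M$ — since this is precisely what identifies the Smith invariants with the Hodge-Tate weights \emph{on the nose} (not just as multisets, but in the matching order $i \leftrightarrow i$ once $\cB$ is ordered compatibly with the filtration).
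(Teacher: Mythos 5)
Your proof is correct and follows essentially the same route as the paper: the paper's one-line proof simply observes that strong divisibility forces the Smith polygon of the lattice to coincide with the Hodge--Tate polygon, which is exactly the identification $\sm{i}=t_{\HT,i}$ you establish, and then applies Théorème~\ref{thm:smith}. You spell this identification out a bit more explicitly via the lemma characterizing strong divisibility by $\varphi\circ t_{\HT,\cB}^{-1}(M)=M$, but this is an equivalent formulation of the same fact, not a different argument.
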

\begin{proof} La base $\cB$ engendrant un réseau fortement divisible,
le polygone de Smith associé à ce réseau est égal au polygone de Hodge-Tate de
la filtration.
\end{proof}
On construit de même des sous-$\Lambda\ot$-modules
de $\LLL{\JJ}{-N}{\DD}$ pour $N\geq 1$.
\begin{enumerate}
\item
Soit $\Zinftyu{\JJ}{\cB}{-N}$ l'endomorphisme construit
comme $\Zinftyu{\JJ}{\cB}{N}$ à l'aide de la suite définie par
$$\ZZZ{\JJ}{\cB}{n}{-N}=\phiun^{\np} \TH{\Delta_\cB}{n}{\JJ}
\phiun^{\nm} \ZZZ{\JJ}{-N}{n-1}{\cB}$$
pour $n \geq N$ et
$\ZZZ{\JJ}{-N}{N-1}{\cB}= \bomega{N-1}{\JJ}\id$.
Le sous-$\Lambda\ot$-module $\Module{\JJ}{\cB}{-N}{\DD}$
de $\cH\oEE\otimes \DD$ associé est contenu dans $\LLL{\JJ}{-N}{\DD}$.
\item
Supposons que $\varphi$ n'admet de valeur propre de la forme $p^j$
et prenons la suite de premier terme
$$\ZZZ{\JJ}{\cB}{0}{eul}=\phiun_{eul}\TH{\Delta_\cB}{0}{\JJ}\phiun_{eul}^{-1}$$ où
 $\phiun_{eul}$ est l'endomorphisme de $\oZE[x]\otimes \DD$
de degré strictement inférieur à $\lgg{\JJ}$ tel que
$$\phiun_{eul}(u^j-1) = (1-p^{-j}\phiun)(1-p^{-1+j}\phiun^{-1})^{-1}
$$ pour $j \in \JJ$.
Le sous-$\Lambda\ot$-module obtenu $\Module{\JJ}{\cB}{eul}{\DD}$
est contenu dans $\Module{\JJ}{\cB}{1}{\DD}$.
Il tient compte des facteurs d'Euler "classiques".
\end{enumerate}
\begin{prop}Soit $N$ un entier $\geq 0$.
\begin{enumerate}
\item
L'application $\theta_N :
\Module{\JJ}{\cB}{N+1}{\DD}/\Module{\JJ}{\cB}{N}{\DD} \to
  \prod_{j\in \III{t_{\HT,1}}} K_N \otimes (\DD/\Fil^{j}\DD)$
est surjective.
\item
 L'application $\theta^N :
 \Module{\JJ}{\cB}{-N}{\DD}/\Module{\JJ}{\cB}{-N+1}{\DD} \to
  \prod_{j\in \III{t_{\HT,1}}} K_N \otimes \Fil^{j}\DD$ est surjective
si dans le cas où $\newtH\neq t_{\HT,d}$,
$\txin{N}{\inclus{\III{t}}{\JJ}}(u^{t_{\HT,1}+1} \zeta_N -1)$ est non nul.
\end{enumerate}
\end{prop}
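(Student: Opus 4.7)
Le plan est d'expliciter les valeurs des générateurs $\Zinftyu{\JJ}{\cB}{N+1}(v)$ (pour (1)) et $\Zinftyu{\JJ}{\cB}{-(N-1)}(v)$ (pour (2)) aux points d'évaluation naturels, puis d'utiliser l'interpolation polynomiale à coefficients dans $\Lambda\ot$ pour réaliser tout tuple prescrit dans le but.

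Pour (1), on fixerait $\zeta_N$ d'ordre $p^N$ et $j \in \III{t_{\HT,1}}$. La congruence $\txin{n}{\inclus{\Jp}{\JJ}} \equiv 1 \bmod \bomega{n-1}{\JJ}$ de la proposition \ref{const:norme} entraîne que $\TH{\Delta_\cB}{n}{\JJ}$ agit comme l'identité sur $K_N \otimes \DD$ au point $u^j \zeta_N - 1$ dès que $n \geq N+1$. La récurrence définissant $\ZZZ{\JJ}{\cB}{n}{N+1}$, combinée à la condition initiale $\ZZZ{\JJ}{\cB}{N+1}{N+1} = \id$, donne alors par récurrence immédiate $\ZZZ{\JJ}{\cB}{n}{N+1}(v)(u^j \zeta_N - 1) = v$ pour tout $n \geq N+1$, d'où en passant à la limite
$$\phiun^{-(N+1)} \Zinftyu{\JJ}{\cB}{N+1}(v)(u^j \zeta_N - 1) = \varphi^{-(N+1)}(v).$$
Comme $\varphi$ est bijectif, $(\varphi^{-(N+1)}(v))_{v \in \cB}$ engendre $\DD$ et se projette donc surjectivement sur chaque $\DD/\Fil^j \DD$. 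L'interpolation polynomiale dans $\Qp[x] \subset \Lambda\ot$ aux points distincts $u^j \zeta - 1$ (pour $j \in \III{t_{\HT,1}}$ et $\zeta \in \mu_{p^N}$) produit, combinée au choix de $v \in \cB$, tout élément prescrit de $\prod_j K_N \otimes \DD/\Fil^j \DD$, d'où la surjectivité de $\theta_N$.

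Pour (2), on procéderait de façon analogue en partant du terme initial $\ZZZ{\JJ}{\cB}{N-2}{-(N-1)} = \bomega{N-2}{\JJ}\,\id$ et en évaluant en $u^j \zeta_{N-1} - 1$ avec $\zeta_{N-1}$ d'ordre $p^{N-1}$. Le facteur $\bomega{N-2}{\JJ}(u^j \zeta_{N-1} - 1)$ est non nul puisque $\zeta_{N-1}^{p^{N-2}}$ est une racine primitive $p$-ième de l'unité. Une étape supplémentaire de la récurrence fait apparaître, selon la décomposition $\Delta_\cB$ de $\varphi^{-N}(v)$, les facteurs $\txin{N-1}{\inclus{\III{t_H(v_k)}}{\JJ}}(u^j \zeta_{N-1} - 1)$, qui sont nuls pour $j \in \III{t_H(v_k)}$ et des unités sinon ; ceci revient à projeter $\varphi^{-N}(v)$ sur la partie $\Fil^j \DD$. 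L'argument d'interpolation conclut alors comme en (1). La difficulté principale se situe dans le cas $\newtH \neq t_{\HT,d}$, où il faut garantir que les facteurs $\txin{N}{\inclus{\III{t}}{\JJ}}(u^{t_{\HT,1}+1} \zeta_N - 1)$ intervenant aux niveaux ultérieurs de la récurrence ne s'annulent pas, sous peine de perdre des composantes dans l'image : c'est précisément le rôle de l'hypothèse faite dans l'énoncé, qui exclut cet effondrement.
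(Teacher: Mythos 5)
Your treatment of part (1) is correct and follows the same route as the paper: you show that $\TH{\Delta_\cB}{n}{\JJ}$ is the identity at $u^{j}\zeta_N-1$ for $n\geq N+1$ (since $\txin{n}{\inclus{\Jp}{\JJ}}\equiv 1\bmod\bomega{n-1}{\JJ}$), deduce $\ZZZ{\JJ}{\cB}{n}{N+1}(v)(u^{j}\zeta_N-1)=v$ for all $n\geq N+1$, hence $\Zinftyu{\JJ}{\cB}{N+1}(u^{j}\zeta_N-1)=\id$, and conclude by interpolation over $\Lambda\ot$. This is exactly what the paper asserts (more tersely) through $\ZZZ{\JJ}{\cB}{N+1}{N+1}=\id$ and stabilization modulo $\bomega{n}{\JJ}$.

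For part (2) there is a genuine index shift that does not match the statement. You take the operator $\Zinftyu{\JJ}{\cB}{-(N-1)}$, with initial term $\bomega{N-2}{\JJ}\id$, and evaluate at a root of unity $\zeta_{N-1}$ of order $p^{N-1}$, applying $\phiun^{-N}$ and decomposing $\varphi^{-N}(v)$. But the source module is $\Module{\JJ}{\cB}{-N}{\DD}$, generated by $\Zinftyu{\JJ}{\cB}{-N}(\DD)$ whose initial term is $\bomega{N-1}{\JJ}\id$, and the codomain is $\prod_j K_N\otimes\Fil^j\DD$, which forces the evaluation point to be a primitive $p^{N}$-th root of unity $\zeta_N$ and the twist to be $\phiun^{-(N+1)}$. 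The paper accordingly computes
$$\phiun^{-(N+1)}\Zinftyu{\JJ}{\cB}{-N}(u^{j}\zeta_N-1)
=\operatorname{diag}_v\bigl(\txin{N}{\inclus{\III{t_{\HT,v}}}{\JJ}}(u^{j}\zeta_N-1)\bigr)\,\phiun^{-(N+1)}\,\bomega{N-1}{\JJ}(u^{j}\zeta_N-1),$$
in which the first factor is the one appearing at the level $n=N$ of the recurrence and annihilates $\Delta_\cB^i$ precisely when $i<j$, thereby producing $K_N\otimes\Fil^{j}\DD$, while the scalar $\bomega{N-1}{\JJ}(u^{j}\zeta_N-1)$ is nonzero. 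Your mechanism (nonvanishing of the $\bomega$ scalar, vanishing pattern of the diagonal $\txin{}{}$ factors selecting $\Fil^{j}$, then interpolation) is the right one and matches the paper's; but every index in your argument for (2) — the operator ($-(N-1)$ versus $-N$), the order of the root of unity ($p^{N-1}$ versus $p^{N}$), the twist ($\phiun^{-N}$ versus $\phiun^{-(N+1)}$), and the level of the relevant $\txin{}{}$ ($N-1$ versus $N$) — must be shifted up by one to prove the stated assertion. Your closing remark correctly identifies that the hypothesis on $\txin{N}{\inclus{\III{t}}{\JJ}}(u^{t_{\HT,1}+1}\zeta_N-1)\neq 0$ is what rules out spurious vanishing when $\newtH\neq t_{\HT,d}$, i.e., when $\JJ$ is strictly larger than $\III{t_{\HT,1}}$.
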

\begin{proof}
Soit $j\in \III{t_{\HT,1}}$.
On a $\Zinftyu{\JJ}{\cB}{N+1}(u^j \zeta_N-1)
=\ZZZ{\JJ}{\cB}{N}{N+1}(u^j \zeta_N -1)=\id$.
D'où la surjectivité de $\theta_N$.
La surjectivité de $\theta^N$ vient de ce que
\begin{equation*}
\begin{split}
\phiun&^{-(N+1)}\Zinftyu{\JJ}{\cB}{-N}(u^j \zeta_N -1)
=\phiun^{-(N+1)}\ZZZ{\JJ}{\cB}{N}{-N}(u^j \zeta_N -1)
\\
&=
diag_{v}([\txin{N}{\inclus{\III{t_{\HT,v}}}{\JJ}}(u^j \zeta_N-1)]_{v})
\phiun^{-(N+1)} \prod_{j'\in \III{t_{\HT,1}}}(u^{(j-j')p^{N-1}}
  \zeta_N^{p^{N-1}} -1).
\end{split}
\end{equation*}
\end{proof}
\begin{rem}
Le déterminant de $\Zinftyu{\JJ}{\cB}{N}$ est un multiple de
$\prod_{j=1}^{d}(\mloginf{[j]}{N})^{\dim \DD/\Fil^{j}\DD}$.
C'est une fonction d'ordre compris entre
$t_{\HT}(\DD)$ et $t_{\HT}(\DD) + \tborne{\lgg{\JJ}} \times \dim \DD$.
Vu comme un élément du $\varphi$-module filtré $\cH\otimes \det\DD$,
il est de pente compris entre 0 et $\tborne{\lgg{\JJ}} \times \dim \DD$.
\end{rem}

\subsection{Construction associée à un raffinement}
\label{raffinement}
\def\No{N}
Nous allons revoir la construction de manière à mettre en évidence sa dépendance
par rapport au $\varphi$-module filtré.
Pour simplifier, supposons dans ce paragraphe que les poids de Hodge-Tate de
$\DD$ sont distincts et que $\varphi$ est diagonalisable, même si ce n'est pas essentiel.
En particulier, $\Fil^{t_{\HT,i}}\DD$ est de dimension $d+1-i$.

On se donne un raffinement $\cF_\bullet$ du $\varphi$-module filtré $\DD$.
Soit
 $\cB=(v_1,\cdots, v_d)$ une base de $\DD$ adaptée à ce raffinement :
$(v_1,\cdots, v_d)$ est adaptée à la filtration $\Fil^\bullet$
et $\cF_i=\langle v_1,\cdots, v_i\rangle$ est stable par $\varphi$.
Soit $\cB_0=(e_1,\cdots, e_d)$ une base de $\DD$ formée de vecteurs propres
telle que $(e_1,\cdots, e_i)$ soit une base de $\cF_i$.
La matrice $P$ de $\cB$ dans la base $\cB_0$
est triangulaire supérieure et on peut la supposer unipotente:
$P =((\lambda_{i,j}))$ avec $\lambda_{i,j}=0 $ si
$i>j$ et $\lambda_{i,i}=1$.
\begin{thm} \label{znraff} Soit $\cF_\bullet$ un raffinement
du $\varphi$-module filtré $\DD$ et $\cB$ et $\cB_0$
des bases comme ci-dessus.
Soit $(\alpha_1, \cdots, \alpha_d)$ la suite
des valeurs propres associée à $\cF_\bullet$:
$\varphi e_i=\alpha_i e_i$.
Soit $J$ un intervalle fini de $\ZZ$ contenant $]t_{\HT,1},t_{\HT,d}]$
et vérifiant $\lgg{\JJ} > \max_{ j \geq i} \ord_p(\alpha_j/\alpha_i)$.
L'endomorphisme $\Zinftyu{\JJ}{\cB}{\No}$ est un polynôme de degré $\leq d-1$
en les $\lambda_{i,j}$ pour $i<j$.
La pente de $\Module{\JJ}{\cB}{\No}{\DD}$ est inférieure ou égale à
$\tborne{\lgg{\JJ}} + \max_{i\leq k}(\ord_p(\alpha_k)-t_{\HT,i})$.
\end{thm}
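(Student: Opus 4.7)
Le plan est de passer dans la base $\cB_0 = (e_1, \ldots, e_d)$ o� $\varphi$ est diagonale, de matrice $A = \mathrm{diag}(\alpha_1, \ldots, \alpha_d)$. Dans la base $\cB$ l'endomorphisme $\TH{\Delta_\cB}{n}{\JJ}$ a pour matrice $D_n = \mathrm{diag}(\txin{n}{\inclus{\III{t_{\HT,i}}}{\JJ}})_i$; dans $\cB_0$ il devient $P D_n P^{-1}$. Comme $A$ et $D_n$ sont toutes deux diagonales et commutent, la conjugaison par $A^{n+1}$ laisse $D_n$ invariant et transforme $P$ en $P_n = A^{n+1} P A^{-(n+1)}$, matrice unipotente triangulaire sup�rieure avec $(P_n)_{i,j} = \lambda_{i,j}(\alpha_i/\alpha_j)^{n+1}$ pour $i < j$. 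La r�currence d�finissant $\ZZZ{\JJ}{\cB}{n}{N}$ se r��crit ainsi, dans $\cB_0$,
$$\ZZZ{\JJ}{\cB}{n}{N} = B_n B_{n-1} \cdots B_{N+1}, \qquad B_k := P_k D_k P_k^{-1}.$$

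Pour l'assertion sur le degr�, on observe que chaque $B_k$ est triangulaire sup�rieure. Comme $P_k = I + N_k$ avec $N_k$ strictement triangulaire sup�rieure et nilpotente d'ordre $\leq d$, $P_k^{-1}$ est un polyn�me de degr� $\leq d-1$ en $N_k$, donc en les $\lambda_{a,b}$. Une v�rification directe montre que le coefficient $(i,j)$ de $B_k$ (avec $i \leq j$) est un polyn�me de degr� $\leq j - i$ en les $\lambda_{a,b}$. Cette propri�t� est pr�serv�e par produit de matrices triangulaires sup�rieures : $(MM')_{i,j} = \sum_{k=i}^j M_{i,k} M'_{k,j}$ donne $\deg \leq (k-i) + (j-k) = j-i$. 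Par r�currence sur $n$ puis passage � la limite, chaque coefficient $(i,j)$ de $\Zinftyu{\JJ}{\cB}{N}$ est un polyn�me de degr� $\leq j - i \leq d - 1$ en les $\lambda_{i,j}$, d'o� la premi�re assertion.

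Pour la borne sur la pente, on estime dans $\cB_0$ la norme de $\phiun^{-n} \Zinftyu{\JJ}{\cB}{N}$ coefficient par coefficient. Le coefficient diagonal $(i,i)$ est $\alpha_i^{-n} \cdot \mxiinf{N+1}{\inclus{\III{t_{\HT,i}}}{\JJ}}$, dont l'ordre est $\leq t_{\HT,d} - t_{\HT,i} + \tborne{\lgg{\JJ}}$ d'apr�s la proposition \ref{xin}; cela donne une contribution � la pente inf�rieure � $\ord_p(\alpha_i) - t_{\HT,i} + \tborne{\lgg{\JJ}}$. Pour un coefficient $(i,j)$ avec $i<j$, on d�veloppe le produit $B_n \cdots B_{N+1}$ en somme sur les cha�nes $i = k_0 < k_1 < \cdots < k_m = j$ et leurs positions de saut $l_1 < \cdots < l_m$; chaque mon�me est un produit de facteurs diagonaux $d_l^{(k_s)}$ sur les intervalles entre sauts et de facteurs $\lambda_{k_{s-1},k_s}(\alpha_{k_{s-1}}/\alpha_{k_s})^{l_s+1}$ aux positions de saut. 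Apr�s multiplication par $\alpha_i^{-n}$, en utilisant l'hypoth�se $\lgg{\JJ} > \max_{j\geq i} \ord_p(\alpha_j/\alpha_i)$ qui assure la convergence des sommes sur les positions $l_s$, la contribution dominante de chaque cha�ne donne une pente major�e par $\max_s(\ord_p(\alpha_{k_s}) - t_{\HT,k_s}) + \tborne{\lgg{\JJ}}$, et l'enveloppe sur toutes les cha�nes donne exactement $\max_{i \leq k}(\ord_p(\alpha_k) - t_{\HT,i}) + \tborne{\lgg{\JJ}}$.

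L'obstacle principal sera le contr�le combinatoire fin des coefficients hors-diagonaux : il faut isoler, dans la somme sur les cha�nes, le r�gime dominant en montrant qu'un saut de $k_{s-1}$ � $k_s$ � la position $l_s$ revient � substituer $\alpha_{k_{s-1}}^{-1}$ par $\alpha_{k_s}^{-1}$ dans la contribution du r�gime diagonal pr�c�dant le saut, et que la maximisation sur les cha�nes et les positions de saut redonne pr�cis�ment la forme $\max_{i \leq k}(\ord_p(\alpha_k) - t_{\HT,i})$ annonc�e, les termes de transition $(\alpha_{k_{s-1}}/\alpha_{k_s})^{l_s+1}$ �tant domin�s par le gain $p^{-\lgg{\JJ}(m-n)}$ provenant des facteurs $D_m - I$ via la proposition \ref{prop:majoration}.
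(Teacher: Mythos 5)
Your approach is essentially the same as the paper's: pass to the eigenbasis $\cB_0$, observe that the matrix $Z_n$ of $\ZZZ{\JJ}{\cB}{n}{N}$ in $\cB_0$ is a product of upper-triangular matrices (your $B_k = P_k D_k P_k^{-1}$ is exactly the paper's $\varphi_{\cB_0}^{k+1} T_k \varphi_{\cB_0}^{-(k+1)}$), and deduce the degree bound from the fact that the property ``coefficient $(i,j)$ has total degree $\leq j-i$ in the $\lambda_{a,b}$'' is stable under products of upper-triangular matrices and under inversion of a unipotent matrix of degree $\leq 1$. This is precisely the content of the paper's Lemma \ref{lempq}, so the first assertion (degree $\leq d-1$) is handled in the same way.

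For the slope bound, however, your argument is not complete. You expand the product $B_n\cdots B_{N+1}$ over ``chains'' $i=k_0<\cdots<k_m=j$ and positions of jumps, and you explicitly flag the ``contr\^ole combinatoire fin'' of the resulting sum as the main obstacle without resolving it. This is exactly the step the paper does carry out: writing the recurrence
$\coeffZ{i,j}{n}=\txin{n}{\inclus{\III{t_{\HT,i}}}{\JJ}}\coeffZ{i,j}{n-1}+\sum_{i<l\leq j}(\alpha_i/\alpha_l)^{n+1}t_{i,l}^{(n)}\coeffZ{l,j}{n-1}$, it bounds $\norm{\alpha_i^{-(n+1)}\coeffZ{i,j}{n}}{1}$ by $C\,\norm{P}{1}^d\,p^{n(\tborne{\lgg{\JJ}}+t_{\HT,d}-t_{\HT,i})+m_{i,j}^{(n)}}$ and establishes, by an explicit induction, $m_{i,j}^{(n)}\leq n\,\max_{i\leq k\leq j}\ord_p(\alpha_k)$. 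That induction is the substance of the proof and cannot be skipped. Moreover your per-chain heuristic ``slope $\leq \max_s(\ord_p(\alpha_{k_s})-t_{\HT,k_s})$'' would in general give something \emph{stronger} than what the theorem asserts (since $t_{\HT,k_s}\geq t_{\HT,i}$), so the claim that ``l'enveloppe sur toutes les cha\^{\i}nes donne exactement $\max_{i\leq k}(\ord_p(\alpha_k)-t_{\HT,i})$'' is not justified as stated; you would have to either prove the sharper bound or, as the paper does, settle for the weaker $t_{\HT,i}$ normalization coming from the column index $i$ and the renormalization by $\alpha_i^{-(n+1)}$. As written the proposal sketches the correct architecture but leaves precisely the nontrivial estimate open.
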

\begin{proof}
Travaillons pour $n\geq \No$ avec $\ZZZ{\JJ}{\cB}{n}{N}$ et notons $Z_n$
sa matrice dans la base $\cB_0$.
La matrice $T_{n}$ de $\TH{\cB}{n}{\JJ}$ dans la base $\cB_0$
est égale à $P diag([\txin{n}{\inclus{\III{t_{\HT,i}}}{\JJ}}]_i)P^{-1}$ et
a comme coefficients
$$
t_{i,j}^{(n)} = \begin{cases}
0 &\text{ pour $i>j$}\\
\txin{n}{\inclus{\III{t_{\HT,i}}}{\JJ}} &\text{ pour $i=j$}\\
\sum_{k=i}^{j}\lambda_{i,k} \lambda_{k,j}'
( \txin{n}{\inclus{\III{t_{\HT,k}}}{\JJ}} -1) &\text{ pour $i<j$}
\end{cases}
$$
où les $\lambda_{i,j}'$ sont les coefficients de la matrice $P^{-1}$.
La matrice $\VV{\cB_0}$ de $\varphi$ dans la base $\cB_0$ est
la matrice diagonale de diagonale $(\alpha_1,\cdots, \alpha_d)$.
La matrice $Z_n$ est donc
une matrice triangulaire supérieure dont les coefficients diagonaux sont
$$(\prod_{m=\No}^{n} \txin{m}{\inclus{\III{t_{\HT,i}}}{\JJ}})_{i=1, \cdots, d}$$
et dont les coefficients $\coeffZ{i,j}{n}$ pour $i<j$ vérifient les relations de récurrence
$$
\coeffZ{i,j}{n} = \sum_{i\leq l \leq j }
 (\frac{\alpha_i}{\alpha_l})^{\np}t_{i,l}^{(n)} \coeffZ{l,j}{n-1}
= \txin{n}{\inclus{\III{t_{\HT,i}}}{\JJ}}\coeffZ{i,j}{n-1} +
\sum_{i<l \leq j } (\frac{\alpha_i}{\alpha_l})^{\np}t_{i,l}^{(n)}
\coeffZ{l,j}{n-1}
.$$
Donnons d'abord le lemme suivant.
 \begin{lem}\label{lempq}
Soit $A$ un anneau intègre.
Soient $P$ et $Q$ deux matrices triangulaires supérieures à coefficients dans
l'anneau de polynômes $A[x_1,\cdots, x_n]$ vérifiant la propriété suivante :

(*) pour tout $i\leq j$, le coefficient $(i,j)$ est un polynôme de degré
 total inférieur ou égal à $j-i$.
\begin{enumerate}
 \item Le produit $P Q$ vérifie aussi la condition (*).
 \item Si de plus $P$ est unipotente et de degré $\leq 1$, $P^{-1}$ vérifie
 la condition (*).
\end{enumerate}
\end{lem}
\begin{proof}
Pour la première assertion, il suffit d'écrire la formule du produit.
Pour la seconde assertion, on écrit $P=\id + N$ avec $N$ matrice triangulaire
supérieure nilpotente. On a donc
$P^{-1}= \id - N + N^2- \cdots + (-1)^{d-1} N^{d-1}$.
Le coefficient $(i,j)$ de $P^{-1}$ est le coefficient $(i,j)$ de la matrice
$\id - N + N^2 - \cdots (-1)^{d-1} N^{j-i}$ qui est de degré $\leq j-i$.
\end{proof}
Continuons la démonstration du théorème \ref{znraff}.
On applique le lemme \ref{lempq} à $P^{-1}$, au produit de
$P$ par $diag([\txin{n}{\inclus{\III{t_{\HT,k}}}{\JJ}}]_k) P^{-1}$
et au produit de telles matrices pour $m\leq n$.
On en déduit que pour $i \leq l$, les $t_{i,l}^{(n)} $ sont des polynômes
en les $\lambda_{k,k'}$ de degré total inférieur ou égal à $l-i$ et à coefficients
dans le $\Zp$-module de $\cH$ engendré par les
 $ \txin{n}{\inclus{\III{t_{\HT,h}}}{\JJ}} -1 $ pour $i \leq h \leq l$
 et que le coefficient $\coeffZ{i,j}{n}$ de $Z_n$
est un polynôme en les $\lambda_{k,l}$ de degré total inférieur ou égal à
$j-i\leq d-1$.
On a donc pour $i \leq l$ et $n$ assez grand,
$$ \norm{(\frac{\alpha_i}{\alpha_l})^{\np}t_{i,l}^{(n)}}{\rho} \leq C(\rho)
p^{-n \left(\lgg{\JJ} + \ord_p(\alpha_i/\alpha_l)\right)}$$
et pour $i \leq j$,
$$\norm{\coeffZ{i,j}{n} - \coeffZ{i,j}{n-1}}{\rho}  \leq
 C'(\rho) p^{ -n(\lgg{\JJ} - \max_{ i\leq l \leq j} \ord_p(\alpha_l/\alpha_i))}$$
où $C(\rho)$ et $C'(\rho)$ sont des constantes faisant intervenir $\norm{P}{1}^d$ avec
$\norm{P}{1}=\max(\abs{\lambda_{k,k'}}{p}$).
On en déduit la convergence de $Z_n$.
Plus précisément, la suite $(\coeffZ{i,j}{n})_n$ converge si
$ \lgg{\JJ} > \max_{i\leq l \leq j}\ord_p(\alpha_l/\alpha_i)$.
Sa limite est encore un polynôme
en les $\lambda_{k,l}$ de degré total inférieur ou égal à
$j-i$ et la matrice $Z_\infty$ de
$\Zinftyu{\JJ}{\cB}{\No}$ dans la base $\cB_0$ est de degré total
en les $\lambda_{k,l}$ inférieur ou égal à $d-1$.

Le calcul de la pente de $\Zinftyu{\JJ}{\cB}{\No}$ se ramène
au comportement des
$\norm{\coeffZ{i,j}{n}}{1}$ comme dans la démonstration du théorème \ref{zndiv}.
Écrivons
$(\UU{\cB_0}{\np} T_{n})_{i,j}=
\sum_{\deg(s_\kappa)\leq j-i} c_{\kappa,n}^{(i,j)} \lambda_{\kappa}^{s_\kappa}$
avec $\kappa$ multi-indice. On a
$$\norm{c_{\kappa,n}^{(i,j)}}{1} \leq p^{\tborne{\lgg{\JJ}} +t_{\HT,d} -t_{\HT,i}+
\max_{i\leq k\leq l\leq j}\ord_p(\alpha_l)-t_{\HT,k}+t_{\HT,i}}
\leq p^{\tborne{\lgg{\JJ}} +t_{\HT,d} -t_{\HT,i}+m_{i,j}}$$ avec
$
m_{i,j}=\max_{i \leq l\leq j}\ord_p(\alpha_l)$.
On déduit de ce qui précède que
$$
\norm{\alpha_i^{\nm} \coeffZ{i,j}{n}}{1}
\leq \norm{P}{1}^d p^{n(\tborne{\lgg{\JJ}}+t_{\HT,d} -t_{\HT,i}) + m_{i,j}^{(n)}}
$$
où $m_{i,j}^{(n)}$ vérifie
$$ m_{i,j}^{(1)}= m_{i,j},\
  m_{i,j}^{(n)} \leq \max_{i\leq l\leq j} m_{i,l}^{(n-1)} + m_{i,j}
.$$
Montrons par récurrence que
$$m_{i,j}^{(n)} \leq \left(\max_{i\leq k\leq j} m_k\right) n$$
avec $m_k=\ord_p(\alpha_k)$.
La relation est bien vraie pour $n=\No$. Si elle est vraie pour $n-1$, on a
\begin{equation*}
\begin{split}
m_{i,j}^{(n)}
&\leq \max_{i\leq l\leq j}\left(\max_{i\leq k\leq l}m_l\right)(n-1)+
\max_{i\leq l\leq j} m_l\\
&
  \leq \left(\max_{i\leq k\leq j}m_k\right)n.
\end{split}
\end{equation*}
On en déduit que pour $n$ assez grand,
$$
\norm{\alpha_i^{\nm} \coeffZ{i,j}{n}}{}
\leq C\norm{P}{1}^dp^{\left(\tborne{\lgg{\JJ}}+t_{\HT,d}-t_{\HT,i}+
  \max_{i\leq k\leq j}(\ord_p(\alpha_k))\right)n}
$$
et que la pente de $\Zinftyu{\JJ}{\cB}{\No}$ est inférieure ou égale à
$\tborne{\lgg{\JJ}} + \max_{i\leq k}(\ord_p(\alpha_k)-t_{\HT,i})$.
\end{proof}

\subsection{Exemple de la dimension 2}
Prenons pour $\DD$ un
$\varphi$-module filtré de dimension 2 de poids de Hodge $t_{\HT,1}=-r<0$, $t_{\HT,2}=0$
dont le polynôme caractéristique de $\varphi$
est $x^2 - p^{-r} a_p x + p^{-r} \U$ avec $a_p \in \oZE$
et $\U$ une unité.
On note $\omega$ une base de $\Fil^0\DD$.
Supposons de plus que le $\varphi$-module filtré $\DD$ est indécomposable,
ce qui implique que $\omega$ n'est pas vecteur propre de $\varphi$.

\subsubsection{Base adaptée au $\varphi$-module filtré}
Une base adaptée au $\varphi$-module filtré $\DD$
est $\cB= (\varphi(\omega),\omega)$. En effet,
la matrice de $\varphi$ dans la base $\cB$ est
\begin{equation*}
\begin{split}
\varphi_{\cB}&=
\smallmat{ \frac{a_p}{p^{r}}&1\\-\frac{\U}{p^{r}}&0}
=\smallmat{a_p&1\\-\U&0}\smallmat{ p^{-r}&0\\0&1}
\end{split}
\end{equation*}
avec $\smallmat{a_p&1\\-\U&0}\in \Gl_2(\oZE)$ et
est de norme $p^{r}$.
Son inverse $\varphi_{\cB}^{-1} =\U^{-1}\smallmat{0&-p^{r}\\\U&a_p}$
est de norme 1.
Le réseau $\res{\cB}= \oZE \varphi \omega \oplus \oZE\omega$ est
un réseau fortement divisible.
Soient $t_{\Ne,1} \leq t_{\Ne,2}$ les pentes de Newton de $\varphi$.
Ce sont aussi les valuations $p$-adiques de ses valeurs propres.
On a
$$\sm{1}=t_{\HT,1}=-r, \quad \sm{2}=t_{\HT,2}=0,\quad
-r\leq t_{\Ne,1}\leq t_{\Ne,2}\leq 0.$$
Si $p$ divise $a_p$, on a $-r<t_{\Ne,1} \leq t_{\Ne,2}<0$
et $\newtHd=\rabiot$ avec $\rabiot=1$ si $p$ ne divise pas $a_p$ et
$0$ sinon. Prenons $J=]-r,\rabiot]$.
On a
\begin{equation*}
\ZZZ{J}{\cB}{n}{N} = \varphi_{\cB}^{\np}
\smallmat{\txin{n}{\inclus{\II{-r}{0}}{J}}&0\\0&1}
\varphi_{\cB}^{\nm}\ZZZ{J}{\cB}{n-1}{N}
\end{equation*}
D'après le théorème \ref{zndiv}, la suite de matrices $\ZZZ{J}{\cB}{n}{N}$ converge.
La pente de $\Zinftyu{J}{\cB}{N}$
est comprise entre $0$ et $\tborne{\lgg{J}}$. Si $\tborne{\lgg{J}}=0$
(par exemple $p > \lgg{J}$),
$\mxiinf{N}{\inclus{\II{-r}{0}}{J}}$
est égal à une unité près à $\mloginf{\II{-r}{0}}{N}$.
\begin{rem}Pour $a_p=0$, on a
$$
B_{m+1,\cB} B_{m,\cB} =
\smallmat{
-p^{r}\U \txin{m+1}{\II{-r}{0}}&0\\
0&
-p^{r}\U \txin{m}{\II{-r}{0}}}=
- p^r\U\smallmat{\txin{m+1}{\II{-r}{0}}&0\\0&\txin{m}{\II{-r}{0}}}
=\varphi^{-2}\smallmat{\txin{m+1}{\II{-r}{0}}&0\\0&\txin{m}{\II{-r}{0}}}
$$
Par passage à la limite, on retrouve sur la diagonale de $\Zinftyu{}{\cB}{N}$ l'analogue
$\prod_{m \text{ pair}}\txin{m}{\II{-r}{0}}$ et
$\prod_{m \text{ impair}}\txin{m}{\II{-r}{0}}$ des éléments
$\log^+$ et $\log^-$ de \cite{pollack}.
\end{rem}
Si la matrice de $\Zinftyu{}{\cB}{1}$ dans la base $\cB=(\varphi(\omega),\omega)$ est
$\smallmat{ Z_{11}&Z_{12}\\Z_{21}&Z_{22}}$,
un élément de $\Module{}{\cB}{1}{}$ est de la forme
$$
\Psi_\cB(f,g)=f(Z_{11} \varphi\omega + Z_{21} \omega) + g(Z_{12} \varphi\omega + Z_{22}\omega)
= (f Z_{11} + g Z_{12}) \varphi\omega + (f Z_{21} + g Z_{22})\omega
$$
avec $f$ et $g$ appartenant à $\Lambda\ot$.
\begin{prop} Soit $(f, g) \in \cH\oEE^2$. Alors, $\Psi_\cB(f,g)$
appartient à $\LLL{}{Eul}{\DD}$ si et seulement si
pour tout $-r \leq j < 0$,
$$
(1-p^{-1}\U) f(u^{j}-1) +(p^{r+j-1}+ p^{-j} -p^{-1}a_p)g(u^{j}-1)= 0
.$$
\end{prop}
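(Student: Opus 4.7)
The plan is to evaluate $\Psi_\cB(f,g)$ at $u^j-1$ for $-r\le j<0$ and translate the Eulerian condition at each such $j$ into a linear relation in $f(u^j-1)$ and $g(u^j-1)$.

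The first key step, for $j\in\,]-r,0]$, is to identify $\Zinftyu{}{\cB}{1}(u^j-1)$ with the identity. The interpolation congruence of Proposition~\ref{const:norme}, combined with $\omega_n^{(j)}(u^j-1)=0$ and $\xi_n(0)=p$, gives $\txin{n}{\inclus{]-r,0]}{J}}(u^j-1)=\xi_n^{(j)}(u^j-1)/p=1$ for every $n\ge 1$. Hence $\TH{\Delta_\cB}{n}{J}(u^j-1)=\id_{\DD}$ in the basis $\cB$; the recursion defining $\ZZZ{}{\cB}{n}{1}$ gives $\ZZZ{}{\cB}{n}{1}(u^j-1)=\id$ for all $n\ge 1$, and the convergence asserted in Theorem~\ref{zndiv} yields $\Zinftyu{}{\cB}{1}(u^j-1)=\id$. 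Consequently, for $j\in\{-r+1,\ldots,-1\}$,
$$\Psi_\cB(f,g)(u^j-1)=f(u^j-1)\,\varphi\omega+g(u^j-1)\,\omega.$$

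Next, I would translate the Eulerian condition itself. Setting $A=f(u^j-1)$ and $B=g(u^j-1)$, and using $\varphi^{-1}(\varphi\omega)=\omega$ together with $\varphi^{-1}(\omega)=\U^{-1}(-p^r\varphi\omega+a_p\omega)$, one expands $(1-p^{-1+j}\varphi^{-1})(A\varphi\omega+B\omega)$ explicitly in the basis $\cB$. For $-r<j<0$ one has $\Fil^j\DD=\Qp\omega$, so $(1-p^{-j}\varphi)\Fil^j\DD$ is the one-dimensional subspace $\Qp(\omega-p^{-j}\varphi\omega)$; the condition that the expanded vector lies in this line reduces to a single scalar equation in $A$ and $B$, and rearranging it produces the stated relation.

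The endpoint $j=-r$ requires a separate analysis, since $u^{-r}-1$ is not an interpolation node of $\txin{n}{\inclus{]-r,0]}{J}}$ and $\Fil^{-r}\DD=\DD$. One computes $\Zinftyu{}{\cB}{1}(u^{-r}-1)$ using the congruence $\txin{n}{\inclus{]-r,0]}{J}}\equiv 1\bmod \bomega{n-1}{J}$ and the convergence of the infinite product, and checks that the stated linear equation at $j=-r$ arises (and is automatically satisfied when $p^{-r}$ is not a $\varphi$-eigenvalue, in which case $(1-p^r\varphi)\DD=\DD$). The main obstacle is the algebraic simplification of the scalar equation obtained in the second paragraph: the factors of $\U,a_p,p^{\pm j},p^r$ coming from $\varphi^{-1}$ must be rearranged carefully to recover the precise coefficients $(1-p^{-1}\U)$ and $(p^{r+j-1}+p^{-j}-p^{-1}a_p)$. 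The indecomposability hypothesis ensures $\omega$ is not an eigenvector of $\varphi$, so the resulting linear equation is non-degenerate.
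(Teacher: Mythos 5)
Your proof follows the paper's line exactly: both start from $\Zinftyu{}{\cB}{1}(u^j-1)=\id$ for $j\in\II{-r}{0}$ (your derivation of this from the interpolation congruences for $\txin{n}{\inclus{\II{-r}{0}}{J}}$ is correct), reduce $\Psi_\cB(f,g)(u^j-1)$ to $A\,\varphi\omega+B\,\omega$ with $A=f(u^j-1)$, $B=g(u^j-1)$, and turn the Euler-factor condition into a scalar relation in $A,B$. Requiring $(1-p^{j-1}\varphi^{-1})(A\varphi\omega+B\omega)$ to lie on the line $\Qp\!\left(\omega-p^{-j}\varphi\omega\right)$ is the same thing as the paper's condition that the $\varphi\omega$-coordinate of $(1-p^{-j}\varphi_\cB)^{-1}(1-p^{j-1}\varphi_\cB^{-1})\smallmat{A\\B}$ vanish, just read before inverting $1-p^{-j}\varphi_\cB$. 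So this is not a different route.

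Two caveats, both of which you half-notice without resolving. First, at $j=-r$ you correctly record that $\Fil^{-r}\DD=\DD$ and that $p^{r}$ is never an eigenvalue of $\varphi$ (its eigenvalues have non-positive valuation), so the Euler condition there is vacuous; but then it does \emph{not} ``produce'' the displayed linear relation at $j=-r$, and your sentence claiming it ``arises'' contradicts what you have just observed. The paper's own argument (``le second vecteur \dots est une base de $\Fil^{-r+1}\DD=\cdots=\Fil^0\DD$'') only uses $j\in\II{-r}{0}$; the range $-r\le j<0$ in the statement should be read as a slip for $-r<j\le 0$. Second, the algebra you defer as the ``main obstacle'' does not actually land on the displayed coefficients: expanding $(1-p^{j-1}\varphi^{-1})(A\varphi\omega+B\omega)$ with $\varphi^{-1}\omega=\U^{-1}(-p^{r}\varphi\omega+a_p\omega)$ and imposing proportionality to $\omega-p^{-j}\varphi\omega$ yields $(1-p^{-1})A+\bigl(\U^{-1}p^{r+j-1}+p^{-j}-\U^{-1}p^{-1}a_p\bigr)B=0$, which agrees with the proposition's coefficients only when $\U=1$; the explicit matrix in the paper's proof appears to have dropped some $\U^{\pm 1}$ factors. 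These are defects of the source rather than of your method, but a reader executing your last paragraph literally would not ``recover the precise coefficients'' as you claim.
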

\begin{proof}
Pour $-r\leq j < 0$, la valeur en $u^{j}-1$ de $\Zinftyu{}{\cB}{1}$ est
l'identité. On a d'autre part
\begin{equation*}
\begin{split}
(1-p^{-j}&\varphi_{\cB})^{-1}(1-p^{j-1} \varphi_{\cB}^{-1})=\\
&\frac{p^{j+r}}{p^{j+r} +  p^{-j}\U - a_p}
\smallmat{
1-p^{-1}\U&
p^{r+j-1} + p^{-j} -p^{-1}a_p\\
- p^{-(j+r)}\U - p^{j-1}\U + p^{-(r+1)}a_p &
 p^{-1}\U+1+(p^{-(j+r)} + p^{j-1})a_p +p^{-(r+1)}a_p^2
}.
\end{split}
\end{equation*}
Le second vecteur de la base $\cB$ étant une base de
$\Fil^{-r+1}\DD= \cdots = \Fil^{0}\DD$,
la condition s'en déduit.
\end{proof}

\subsubsection{Base adaptée à un raffinement}
On suppose pour simplifier que les valeurs propres sont dans $\EEE$.
Soit $\cB_0=(e_1,e_2)$ avec
$\varphi e_1 = \alpha_1 e_1$,
$\varphi e_2 = \alpha_2 e_2$ où $\alpha_i$ est de valuation $p$-adique $s_i$
avec $-r\leq s_1, s_2 \leq 0$, $s_1 + s_2 =-r$
(on a $\{s_1,s_2\}=\{t_{\Ne,1},t_{\Ne,2}\}$).
Écrivons $\omega \in\Fil^0 \DD$ dans la base $\cB_0$ normalisée
de manière à ce que $\omega = -\lambda e_1 + e_2$ pour $\lambda \in \oZE$
(éventuellement $\lambda \neq 0$ selon le choix de $e_1$ et $e_2$).
Comme $\DD$ est indécomposable, $\omega$ n'est pas vecteur propre de $\varphi$ et
$\cB=(e_1,\omega)$ est une base de $\DD$ à la filtration de $\cD$
et adaptée au raffinement $\EEE e_1 \subset \DD$.
Le changement de base entre les bases $\cB_0$ et $\cB$ est donné par la matrice
$P=\smallmat{1&-\lambda\\
0&1
}$. La matrice $\VV{\cB}$ de $\varphi$ dans la base $\cB$ est
$$\VV{\cB}=P^{-1} \smallmat{\alpha_1 &0\\
0&\alpha_2
} P=\smallmat{\alpha_1&(\alpha_2-\alpha_1)\lambda\\
0&\alpha_2
},\quad \UU{\cB}{-1}=
\smallmat{\alpha_1^{-1}&(\alpha_2^{-1}-\alpha_1^{-1})\lambda\\
0&\alpha_2^{-1}}.
$$
On a
$$
\smallmat{\alpha_1&(\alpha_2-\alpha_1)\lambda\\0&\alpha_2}=
\smallmat{\frac{\alpha_1}{p^{s_1}}
&\frac{\alpha_2}{p^{s_2}}(1-\frac{\alpha_1}{\alpha_2})\lambda\\0&\frac{\alpha_2}{p^{s_2}}}
\smallmat{p^{s_1}&0\\0&p^{s_2}}
$$
où $\frac{\alpha_1}{p^{s_1}}$ et $\frac{\alpha_2}{p^{s_2}}$
sont des unités. La base $\cB$ est adaptée au $\varphi$-module filtré
si
$$\ord_p(\lambda) \geq -\ord_p(1-\frac{\alpha_1}{\alpha_2}),$$
ce qui est possible à réaliser quitte à remplacer $e_1$ par un multiple;
elle engendre un $\oZE$-module fortement divisible si de plus
$\{s_1,s_2\}=\{0,-r\}$, c'est-à-dire uniquement si $\cD$ est ordinaire.

Supposons que $\ord_p(\lambda)\geq -\ord_p(1-\frac{\alpha_1}{\alpha_2})$
et appliquons le théorème \ref{thm:smith}.
Lorsque $\DD$ est ordinaire ($s_1=-r$, $s_2=0$),
 la pente de $\Zinftyu{}{\cB}{N}$ est comprise entre
 $0$ et $\tborne{r+1}$
(comme $t_{\HT,d}=0$, c'est aussi son $\varphi$-ordre).
Lorsque $-r < s_1,s_2 <0$ (cas non ordinaire),
on peut prendre $J=]-r,0]$;
la pente de $\Zinftyu{}{\cB}{N}$ est comprise entre $\min(s_1+r,s_2)$
et $\max(s_1+r,s_2)+\tborne{r}$, c'est-à-dire entre $s_2$ et $-s_2+\tborne{r}$.
On voit ainsi l'influence du choix de la base $\cB$ (ici, elle n'engendre pas
un réseau fortement divisible).

Explicitons la construction de la matrice $Z_n$ de
$\ZZZ{\JJ}{\cB}{n}{N}$
dans la base $\cB_0$ avec $J=]-r,\rabiot]$.
On a
\begin{equation*}
Z_n=
\smallmat{
\txin{n}{\inclus{\II{-r}{0}}{\II{-r}{\rabiot}}}&\lambda (\frac{\alpha_1}{\alpha_2})^{\np}
(\txin{n}{\inclus{\II{-r}{0}}{\II{-r}{\rabiot}}}-1)
\\0&1
}
Z_{n-1}
\end{equation*}
avec $Z_N=\id$.
Dans le cas supersingulier, c'est simplement
\begin{equation*}
Z_n=
\smallmat{
\txin{n}{\II{-r}{0}}&\lambda (\frac{\alpha_1}{\alpha_2})^{\np}
(\txin{n}{\II{-r}{0}}-1)
\\0&1
}
Z_{n-1}.
\end{equation*}
Donc, pour $n \geq N$
\begin{equation*}
Z_n
=\smallmat{\prod_{m=N}^{n}\txin{m}{\inclus{\II{-r}{0}}{\II{-r}{\rabiot}}}&\lambda F_{n,\alpha_1/\alpha_2}\\
0&1}
\end{equation*}
où les $F_{n,\nu}$ vérifient la relation de récurrence
$$
F_{n,\nu}= \txin{n}{\inclus{\II{-r}{0}}{\II{-r}{\rabiot}}} F_{n-1,\nu} + \nu^{\np}(\txin{n}{\inclus{\II{-r}{0}}{\II{-r}{\rabiot}}}-1)
$$
avec
$ F_{N,\nu} = 0$
et dans le cas supersingulier
$$
F_{n,\nu}= \txin{n}{\II{-r}{0}} F_{n-1,\nu} + \nu^{\np}(\txin{n}{\II{-r}{0}}-1)
$$
D'où
\begin{equation*}
F_{n,\nu}= \sum_{t=N}^{n} \nu^{t+1}(\txin{t}{\inclus{\II{-r}{0}}{\II{-r}{\rabiot}}}-1)
\prod_{t < i \leq n} \txin{i}{\inclus{\II{-r}{\rabiot}}{\II{0}{\rabiot}}} .
\end{equation*}
On a $F_{n,\nu}(u^{j} \zeta -1) =
F_{m,\nu}(u^{j} \zeta -1)=- \nu^{m+1}$ pour $ j \in \II{-r}{0}$
pour toute racine de l'unité $\zeta$ d'ordre $p^m$ avec $N\leq m\leq n$.
Si $r+\rabiot$ est de plus strictement supérieur à $-\ord_p(\nu)$,
la suite des $F_{n,\nu}$ converge vers un élément
$F_{\infty,\nu}^{\inclus{\II{-r}{0}}{\II{-r}{\rabiot}}}$ de $\cH$
d'ordre inférieur ou égal à $r+\rabiot+\tborne{r+\rabiot}$ (on peut le démontrer directement).
On a alors pour $n \geq 0$, $j\in \II{-r}{0}$ et $\zeta$
racine de l'unité d'ordre $p^n$
\begin{equation*}
F_{\infty,\nu}^{\inclus{\II{-r}{0}}{\II{-r}{\rabiot}}}(u^{j} \zeta -1) = - \nu^{\np}.
\end{equation*}
La matrice $Z_\infty$ de $\Zinftyu{}{\cB}{N}$ dans la base $\cB_0$ est
\begin{equation*}
 Z_\infty
=\smallmat{\txin{\infty}{\inclus{\II{-r}{0}}{\II{-r}{\rabiot}}}&\lambda F^{\inclus{\II{-r}{0}}{\II{-r}{\rabiot}}}_{\infty,\alpha_1/\alpha_2}\\
0&1}
\end{equation*}
ce qui se simplifie dans le cas supersingulier en
\begin{equation*}
 Z_\infty
=\smallmat{\txin{\infty}{\II{-r}{0}}&\lambda F^{\II{-r}{0}}_{\infty,\alpha_1/\alpha_2}\\0&1}
\end{equation*}

\subsection{Exemple de la dimension 3}
Explicitons un peu les calculs dans le cas où $\DD$ est de dimension 3 et où la filtration
est donnée par $0\neq \Fil^{t_{H,3}} \DD \subsetneq \Fil^{t_{H,2}} \DD \subsetneq \Fil^{t_{H,1}} \DD$
en reprenant les notations du paragraphe \ref{raffinement}:
soit $\cB_0=(e_1, e_2, e_3)$ une base de vecteurs propres de $\varphi$
adaptée au raffinement, de valeurs propres respectives $\alpha_1, \alpha_2, \alpha_3$
 et soit $P$ la matrice de la base de la filtration
 $\cB=(v_1, v_2, v_3)$ choisie.
$$P=\smallmat{
1&-\lambda_{12}&-\lambda_{1 3}\\
0&1&-\lambda_{2 3}&\\
0&0&1
}
\quad P^{-1}=\smallmat{
1&\lambda_{1 2}&\lambda_{1 3}+\lambda_{1 2}\lambda_{2,3}\\
0&1&\lambda_{2 3}\\
0&0&1
}
\quad
\VV{\cB_0}=\smallmat{
\alpha_1&0&0\\
0&\alpha_2&0\\
0&0&\alpha_3
}
$$
Du calcul de la matrice de $\varphi$ dans la base $\cB$,
on déduit que
$$P\VV{\cB_0}P^{-1}\VV{\cB_0}^{-1}=
\smallmat{
1&(\frac{\alpha_1}{\alpha_2}-1)\lambda_{12}
&(\frac{\alpha_2}{\alpha_3}-\frac{\alpha_1}{\alpha_3})\lambda_{12}\lambda_{23}
+(\frac{\alpha_1}{\alpha_3}-1)
\lambda_{13}\\
0&1&(1-\frac{\alpha_2}{\alpha_3})\lambda_{23}\\
0&0&1
}.$$
On peut facilement écrire les conditions sur la valuation $p$-adique
des $\lambda_{ij}$ pour que cette matrice soit dans $SL_3(\Zp)$
et donc que la base $\cB$ soit adaptée au $\varphi$-module filtré $\cD$:
les coefficients doivent être des entiers $p$-adiques et il est toujours possible
de modifier les $e_i$ par un scalaire pour que cela soit vérifié
et donc d'appliquer le théorème \ref{thm:smith}.

Faisons le calcul explicite de la matrice dans la base $\cB_0$. Pour simplifier,
supposons que $t'_{\HT,3}=t_{H,3}$ et que  $\JJ=]t_{H,1},t_{H,3}]$, on a
$$\ZZZ{\JJ}{\cB}{n}{N}(\cB_0)= T_{n} \ZZZ{\JJ}{\cB}{n-1}{N}(\cB_0)$$
avec
\begin{equation*}
\begin{split}
&T_{n}=\UU{\cB}{\np}\smallmat{
\txin{n}{\IIII{t_{\HT,1}}}&0&0\\
0&\txin{n}{\inclus{\IIII{t_{\HT,2}}}{\JJ}}&0\\
0&0&\txin{n}{\inclus{\IIII{t_{\HT,3}}}{\JJ}}
}
\UU{\cB}{-n} \\
&=
\smallmat{
\txin{n}{\IIII{t_{\HT,1}}}& (\frac{\alpha_1}{\alpha_2})^{\np}\lambda_{1 2}
(\txin{n}{\IIII{t_{\HT,1}}}-\txin{n}{\inclus{\IIII{t_{\HT,2}}}{\JJ}})&
(\frac{\alpha_1}{\alpha_3})^{\np}\left(\lambda_{13}(\txin{n}{\IIII{t_{\HT,1}}}{}-1)
+\lambda_{1 2}\lambda_{2 3}(\txin{n}{\IIII{t_{\HT,1}}}
-\txin{n}{\inclus{\IIII{t_{\HT,2}}}{\JJ}})\right)\\
0&\txin{n}{\inclus{\IIII{t_{\HT,2}}}{\JJ}}&(\frac{\alpha_2}{\alpha_3})^{\np}
\lambda_{2 3}(\txin{n}{\inclus{\IIII{t_{\HT,2}}}{\JJ}}-1)\\
0&0&1
}
\end{split}
\end{equation*}
On a alors une expression de la forme suivante pour $\ZZZ{\JJ}{\cB}{n}{N}$ :
$$\ZZZ{\JJ}{\cB}{n}{N}= \smallmat{
\prod_{m=N}^n\txin{m}{\IIII{t_{\HT,1}}}& \lambda_{1 2} F_{n,12}&
\lambda_{13} F_{n,13}+\lambda_{1 2}\lambda_{2 3} G_{n,1 3}\\
0&\prod_{m=N}^n\txin{m}{\inclus{\IIII{t_{\HT,2}}}{\JJ}}&\lambda_{2 3} F_{n,2 3}\\
0&0&1
} $$
D'où une relation de récurrence facile à écrire.
En passant à la limite, $\Zinftyu{\JJ}{\cB}{N}$ s'écrit
$$\Zinftyu{\JJ}{\cB}{N}= \smallmat{
\mxiinf{N}{\IIII{t_{\HT,1}}}& \lambda_{1 2} F_{\infty,N,12}&
\lambda_{13} F_{\infty,N,13}+\lambda_{1 2}\lambda_{2 3} G_{\infty,N,1 3}\\
0&\mxiinf{N}{\inclus{\IIII{t_{\HT,2}}}{\IIII{t_{\HT,1}}}}&\lambda_{2 3} F_{\infty,N,2 3}\\
0&0&1
}. $$

\section{Conséquences}
\begin{prop}\label{prop:rang}
Le $\cH\oEE$-module $\LLL{\JJ}{N}{\DD}$ est de rang $d = \dim \DD$.
\end{prop}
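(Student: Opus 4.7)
The plan is to obtain the rank by a double inequality, using the material already assembled in Sections~2.2--2.4.

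\textbf{Upper bound.} By definition, $\LLL{\JJ}{N}{\DD}$ sits inside the free $\cH$-module $\cH\otimes\DD$ of rank $d$, so its rank is at most $d$. Since $\cH$ is an integral domain (a B\'ezout domain, in fact), a submodule of a free module of rank $d$ has rank $\leq d$, and this direction requires no further work.

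\textbf{Lower bound.} The key is that we have already constructed explicit elements producing a free submodule of maximal rank. More precisely, fix any base $\cB$ of $\DD$ adapt\'ee \`a la filtration and let $\Zinftyu{\JJ}{\cB}{N}$ be the $\cH$-linear endomorphism of $\cH\otimes\DD$ furnished by Th\'eor\`eme~\ref{zndiv}. Its image, the submodule $\Module{\JJ}{\cB}{N}{\DD}$ generated by the $\Zinftyu{\JJ}{\cB}{N}(v)$ for $v\in\cB$, is contained in $\LLL{}{N}{\DD}$ by Th\'eor\`eme~\ref{zndivbis}. Moreover, Th\'eor\`eme~\ref{zndiv} computes its determinant explicitly as a unit times
\[
\prod_{k=1}^{d}\mxiinf{N}{\inclus{\III{t_{\HT,k}}}{\JJ}},
\]
and Proposition~\ref{xin} shows that each factor $\mxiinf{N}{\inclus{\III{t_{\HT,k}}}{\JJ}}$ is a nonzero element of $\cH$ (it is divisible by $\mloginf{\III{t_{\HT,k}}}{N}$, which is plainly nonzero as an infinite product of nonzero convergent factors). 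Hence the determinant of $\Zinftyu{\JJ}{\cB}{N}$ is a nonzero element of $\cH$, so the images $\Zinftyu{\JJ}{\cB}{N}(v)$, $v\in\cB$, are $\cH$-linearly independent. This produces $d$ linearly independent elements of $\LLL{}{N}{\DD}$.

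\textbf{Conclusion.} Combining both inequalities, $\LLL{\JJ}{N}{\DD}$ has rank exactly $d$. There is no real obstacle here: all the work has been invested in the construction of $\Zinftyu{\JJ}{\cB}{N}$, in verifying that its image is contained in $\LLL{}{N}{\DD}$ (convergence and the vanishing/filtration compatibility at $u^{j}\zeta-1$), and in computing its determinant. The only mild point to check is that the argument covers both $N\geq 0$ and $N<0$; for $N<0$ one uses the parallel construction $\Zinftyu{\JJ}{\cB}{-N}$ introduced at the end of Section~2.3, whose image lies in $\LLL{\JJ}{-N}{\DD}$ and whose determinant is again a nonzero product of shifted $\mxiinf{}{}$'s, so the same reasoning applies verbatim.
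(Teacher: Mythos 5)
Your lower bound agrees exactly with the paper: $\Module{\JJ}{\cB}{N}{\DD}\subset\LLL{\JJ}{N}{\DD}$ has rank~$d$ because $\det\Zinftyu{\JJ}{\cB}{N}$ is nonzero, as supplied by Th\'eor\`eme~\ref{zndiv} and Proposition~\ref{xin}. (One phrasing slip there: being \emph{divisible} by the nonzero $\mloginf{\III{t_{\HT,k}}}{N}$ does not by itself force $\mxiinf{N}{\inclus{\III{t_{\HT,k}}}{\JJ}}$ to be nonzero, since $0$ is divisible by everything; the correct reason is that Proposition~\ref{xin} exhibits it as a convergent infinite product of factors tending to $1$, hence nonzero, or equivalently as a function of finite order.)

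The upper bound, however, is where you miss the substance of the paper's argument, and the dismissal ``this direction requires no further work'' is exactly what goes wrong. What you establish is only that the rank of $\LLL{\JJ}{N}{\DD}$ over the fraction field of $\cH$ is at most~$d$ --- a tautology for any submodule of $\cH\otimes\DD$. But the proposition is used downstream to speak of a \emph{base} of $\LLL{\JJ}{N}{\DD}$ and of its \emph{diviseurs \'el\'ementaires} inside $\cH\otimes\DD$; for that one needs $\LLL{\JJ}{N}{\DD}$ to be a \emph{free} $\cH$-module of finite rank, and this is genuinely non-trivial. Over $\cH$ (which is not Noetherian, not a PID), an arbitrary submodule of a free module of rank $d$ need not be finitely generated, let alone free; the B\'ezout property you invoke only gives freeness of \emph{finitely generated} torsion-free modules, so it does not close the gap. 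The paper's actual proof verifies that $\LLL{\JJ}{N}{\DD}$ is \emph{closed} in $\cH\otimes\DD$ for the Fr\'echet topology --- the defining interpolation conditions $(1\otimes\varphi)^{\nm}g(u^j\zeta-1)\in K_n\otimes\Fil^j\DD$ are evaluation conditions landing in closed finite-dimensional subspaces, hence pass to Fr\'echet limits --- and then invokes a theorem of Berger: a Fr\'echet-closed submodule of a free $\cH$-module of rank $d$ is free of rank $\leq d$. Your proof, as written, does not establish freeness and hence does not yield what the paper actually needs from this proposition.
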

\begin{proof}
Le $\cH\oEE$-module $\LLL{\JJ}{N}{\DD}$ contient $\Module{\JJ}{\cB}{N}{\DD}$
qui est de rang supérieur ou égal à $d$
car le déterminant de $\Zinftyu{\JJ}{\cB}{N}$ est non nul.
D'autre part, le module $\LLL{\JJ}{N}{\DD}$ est un sous-module fermé de
$\cH \otimes \DD$ pour la topologie de Fréchet. En effet, si $g_s$ est une suite d'éléments
de $\LLL{\JJ}{N}{\DD}$ convergeant vers un élément $g$ de $\cH\oEE \otimes \DD$, on
a pour tout $j\leq t_{\HT,d}$, $(1\otimes \varphi)^{\np} g_s(u^{j} \zeta -1) \in K_n \otimes \Fil^{j} \DD$
pour tout entier $s$ et pour tout entier $n \geq N$ et $\zeta$ d'ordre $p^n$.
La convergence pour la topologie de Fréchet implique la convergence point par point.
Donc la limite $(1\otimes \varphi)^{\np} g(u^{j} \zeta -1)$
de $(1\otimes \varphi)^{\np} g_s(u^{j} \zeta -1)$ lorsque $s \to \infty$
appartient à $K_n \otimes \Fil^{j} \DD$ qui est un sous-espace fermé de $K_n\otimes \DD$
(dimension finie).
On peut donc appliquer la proposition suivante pour en déduire que $\LLL{\JJ}{N}{\DD}$ est de
rang inférieur ou égal à $d$.
\end{proof}
\begin{prop}[\cite{berger1}]
Soit $M$ un module libre de rang fini $d$ sur $\cH\oEE$ et soit $M'$
un sous-module de $M$ fermé pour la topologie de Fréchet de $M$.
Alors, $M'$ est libre de rang $e\leq d$.
\end{prop}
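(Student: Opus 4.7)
La preuve va proc�der par r�currence sur le rang $d$ de $M$, en exploitant la propri�t� structurelle fondamentale de $\cH\oEE$, � savoir que c'est un anneau de B�zout (tout id�al de type fini est principal), cons�quence du th�or�me de factorisation de Lazard pour les fonctions analytiques sur le disque unit� ouvert $p$-adique. L'id�e centrale est qu'un sous-module ferm� $M'$ de $M\cong \cH\oEE^d$ correspond � une sous-faisceau coh�rent du faisceau trivial de rang $d$ sur le disque unit� ouvert, donc � un fibr� vectoriel, donc trivialisable puisque le disque est contractile.

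Plus concr�tement, pour le cas de base $d=1$, il s'agit de montrer que tout id�al ferm� non nul $I$ de $\cH\oEE$ est principal. La strat�gie consiste � associer � $I$ le diviseur $D(I)=\inf_{f\in I} \textrm{div}(f)$, c'est-�-dire la famille des z�ros communs (avec multiplicit�s) � tous les �l�ments de $I$; comme tout diviseur localement fini sur le disque unit� ouvert est le diviseur d'une fonction de $\cH\oEE$ (th�or�me de type Weierstrass), on obtient $g\in \cH\oEE$ tel que $D(g)=D(I)$. L'inclusion $(g)\subset I$ est imm�diate; pour l'inclusion inverse, on utilise la fermeture: un �l�ment $f\in I$ v�rifie $f/g \in \cH\oEE$ par d�finition du diviseur, et l'approximation des quotients $f/g$ par des $f_n/g$ avec $f_n\in (g)$ (obtenus via troncature) fournit $f\in \overline{(g)}=(g)$, o� l'on utilise que $(g)$ est ferm� (un id�al principal est l'image ferm�e de la multiplication par $g$).

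Pour le pas de r�currence, je fixerais une base $(e_1,\dots,e_d)$ de $M$ et projetterais via $\pi:M\to \cH\oEE$ sur la premi�re coordonn�e. Je poserais $I=\pi(\overline{M'})$ et je d�montrerais d'abord que $I$ est un id�al ferm� (point d�licat, voir ci-dessous), puis par le cas de base j'�crirais $I=(g)$. Je rel�verais $g$ en un �l�ment $m\in M'$ tel que $\pi(m)=g$; alors la suite exacte
$$0 \to M' \cap \ker\pi \to M' \to (g) \to 0$$
montre que $M'$ est extension du module libre de rang $\leq 1$ qu'est $(g)$ par $M'\cap \ker\pi$, ce dernier �tant un sous-module ferm� de $\ker\pi \cong \cH\oEE^{d-1}$, donc libre de rang $\leq d-1$ par hypoth�se de r�currence. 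Comme $(g)$ est libre, la suite se scinde, et $M'$ est libre de rang $\leq d$.

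Le principal obstacle est la fermeture de $I=\pi(M')$ dans $\cH\oEE$: en toute g�n�ralit�, une projection continue entre Fr�chet ne pr�serve pas la fermeture. La solution utilise que la topologie de Fr�chet de $\cH\oEE$ est la limite projective des normes $\norm{\cdot}{\rho}$, et qu'on peut contr�ler un rel�vement: si $(\pi(m_n))$ converge vers $h\in \cH\oEE$, on doit montrer qu'on peut remplacer $m_n$ par une suite \emph{convergente} dans $M'$ ayant m�me image dans la limite. C'est ici que la structure sp�cifique de $\cH\oEE$ intervient par un argument de diviseurs: quitte � soustraire un �l�ment convenable de $M'\cap \ker\pi$ (qui est ferm� par r�currence, donc Fr�chet, et o� le th�or�me de l'application ouverte de Banach-Steinhaus s'applique), on peut rendre $(m_n)$ Cauchy coordonn�e par coordonn�e, ce qui donne $m=\lim m_n\in M'$ avec $\pi(m)=h$. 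Ce raisonnement est essentiellement celui de Berger \cite{berger1}, qui encode cette argumentation dans le langage des $(\varphi,\Gamma)$-modules mais se ram�ne in fine � la coh�rence du faisceau d'analytiques sur le disque ouvert.
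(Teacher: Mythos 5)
Notez d'abord que l'article ne d\'emontre pas cette proposition\,: il la cite de Berger \cite{berger1} sans preuve. Votre strat\'egie g\'en\'erale (r\'ecurrence sur $d$, projection sur une coordonn\'ee, cas de base\,: tout id\'eal ferm\'e de $\cH$ est principal) est bien dans l'esprit de la d\'emonstration, mais l'esquisse comporte deux lacunes s\'erieuses, et l'une d'elles inverse exactement la difficult\'e.

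Dans le cas de base $d=1$, vous d\'eclarez \emph{imm\'ediate} l'inclusion $(g)\subset I$ et vous utilisez la fermeture pour $I\subset(g)$\,: c'est \`a l'envers. Si $f\in I$, alors $\mathrm{div}(f)\geq D(I)=\mathrm{div}(g)$, donc $f/g\in\cH$ par Lazard et $f=(f/g)\,g\in(g)$ directement, sans approximation ni fermeture de $(g)$\,: c'est $I\subset(g)$ qui est gratuit. En revanche, rien ne garantit que l'\'el\'ement $g$ de diviseur $D(I)$ fourni par Weierstrass--Lazard appartienne \`a $I$\,: c'est $(g)\subset I$ qui est le point non trivial et qui utilise la fermeture de $I$. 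Il faut y mettre un argument du type\,: $\cH$ \'etant de B\'ezout, les pgcd finis $g_n=\mathrm{pgcd}(f_1,\dots,f_n)$ pour $f_i\in I$ bien choisis appartiennent \`a $I$, et, convenablement normalis\'es, convergent vers $g$ pour la topologie de Fr\'echet, d'o\`u $g\in\overline I=I$. Cette convergence, qui est le contenu r\'eel du cas de base, n'est pas discut\'ee.

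Pour l'\'etape de r\'ecurrence, la fermeture de $I=\pi(M')$ est, comme vous le notez, le point d\'elicat, mais votre argument ne le r\`egle pas. L'expression \emph{th\'eor\`eme de l'application ouverte de Banach--Steinhaus} confond deux \'enonc\'es distincts, et surtout le th\'eor\`eme de l'application ouverte entre Fr\'echet ne dit nullement que l'image d'un sous-espace ferm\'e est ferm\'ee (c'est faux en g\'en\'eral). L'affirmation selon laquelle on peut rendre $(m_n)$ de Cauchy coordonn\'ee par coordonn\'ee en soustrayant des \'el\'ements de $M'\cap\ker\pi$ est pr\'ecis\'ement ce qu'il faut d\'emontrer et n'a rien d'\'el\'ementaire\,: c'est l'apport de la th\'eorie des modules coadmissibles de Schneider--Teitelbaum sur l'alg\`ebre de Fr\'echet--Stein $\cH=\varprojlim_{\rho<1}\cH_\rho$ (o\`u chaque $\cH_\rho$ est principal), qui donne que $M'$ est coadmissible, que $\pi(M')$ l'est aussi, et qu'un sous-$\cH$-module coadmissible de $\cH$ est automatiquement ferm\'e. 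Enfin, l'argument initial par \emph{fibr\'es sur le disque ouvert contractile} est circulaire\,: la libert\'e de ces fibr\'es rigides est essentiellement \'equivalente \`a l'\'enonc\'e \`a d\'emontrer, et n'est pas un fait topologique gratuit en g\'eom\'etrie rigide.
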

\begin{prop}\label{detdiv}
Le détermimant de $\LLL{\JJ}{N}{\DD} \to \cH\oEE \otimes\oEE \DD$ (autrement dit
l'indice de $\LLL{\JJ}{N}{\DD}$ dans $\cH\oEE \otimes \DD$)
est divisible par $\prod_{j\leq t_{\HT,d}} (\mloginf{[j]}{N})^{\dim \DD/\Fil^{j}\DD}$.
\end{prop}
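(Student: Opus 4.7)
Le plan est d'�tablir la divisibilit� facteur premier par facteur premier dans l'anneau $\cH$. Puisque $\cH$ est un anneau int�gre admettant une th�orie des diviseurs �l�mentaires, et que $\LLL{}{N}{\DD}$ et $\cH\otimes\DD$ sont tous deux libres de rang $d$ sur $\cH$ d'apr�s la proposition~\ref{prop:rang}, l'inclusion poss�de un d�terminant bien d�fini dans $\cH$ (� une unit� pr�s), �gal au produit de ses diviseurs �l�mentaires. Pour chaque couple $(j_0, n_0)$ avec $t_{\HT,1} < j_0 \leq t_{\HT,d}$ et $n_0 \geq N$, le polyn�me $\mlog{[j_0]}{n_0}$ est irr�ductible dans $\Qp[x] \subset \cH$ (c'est, � une unit� pr�s, le polyn�me minimal de $u^{j_0}\zeta - 1$ sur $\Qp$ pour $\zeta$ racine primitive $p^{n_0}$-i�me de l'unit�), de sorte que $\cH/(\mlog{[j_0]}{n_0}) \cong K_{n_0}$ et que le localis� $\cH_{(\mlog{[j_0]}{n_0})}$ est un anneau de valuation discr�te de corps r�siduel $K_{n_0}$. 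Les $\mlog{[j_0]}{n_0}$ obtenus pour des couples distincts sont deux � deux premiers entre eux.

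L'�tape centrale, � ex�cuter pour chaque couple $(j_0, n_0)$ fix�, consiste � consid�rer l'application $\Qp$-lin�aire
$$\sigma_{j_0, n_0} : \cH \otimes \DD \longrightarrow K_{n_0} \otimes_{\Qp} \bigl(\DD / \phiun^{-(n_0+1)}\Fil^{j_0}\DD\bigr),\quad g \mapsto g(u^{j_0}\zeta - 1) \bmod \phiun^{-(n_0+1)}\Fil^{j_0}\DD,$$
o� $\zeta$ est une racine primitive $p^{n_0}$-i�me de l'unit� fix�e. Cette application est surjective (d�j� l'�valuation $\cH \to K_{n_0}$ en $u^{j_0}\zeta - 1$ l'est), et par d�finition m�me de $\LLL{}{N}{\DD}$ le sous-module $\LLL{}{N}{\DD}$ est contenu dans son noyau. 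Ainsi $\sigma_{j_0, n_0}$ induit une surjection de $\cH\otimes\DD/\LLL{}{N}{\DD}$ sur son but, lui-m�me annul� par $\mlog{[j_0]}{n_0}$. Apr�s localisation en $\pi := \mlog{[j_0]}{n_0}$, on obtient une surjection entre modules de torsion de type fini sur l'anneau de valuation discr�te $\cH_{(\pi)}$, dont le but a longueur $c_{j_0} := \dim_{\Qp}(\DD/\Fil^{j_0}\DD)$ (en tant que $K_{n_0}$-espace vectoriel de dimension $c_{j_0}$ annul� par $\pi$). La longueur de la source est donc au moins $c_{j_0}$, et dans la d�composition en diviseurs �l�mentaires $\det = \prod a_i$, au moins $c_{j_0}$ des $a_i$ sont divisibles par $\pi$, d'o� $\ord_{\mlog{[j_0]}{n_0}}(\det) \geq c_{j_0}$.

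Il reste � assembler : les puissances $\mlog{[j_0]}{n_0}^{c_{j_0}}$ �tant deux � deux premi�res entre elles pour des couples $(j_0, n_0)$ distincts, chaque produit fini de telles puissances divise $\det$ dans $\cH$. En passant � la limite dans la topologie de Fr�chet de $\cH$, en s'appuyant sur la proposition~\ref{xin} qui garantit la convergence dans $\cH$ des produits infinis $\mloginf{[j_0]}{N} = \prod_{n_0 \geq N}\mlog{[j_0]}{n_0}$, on en d�duit que $\det$ est divisible par $\prod_{j_0 \leq t_{\HT,d}} (\mloginf{[j_0]}{N})^{c_{j_0}}$.

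La difficult� principale est pr�cis�ment ce passage des produits finis aux produits infinis de facteurs premiers entre eux dans l'anneau non n�th�rien $\cH$ : dans un cadre n�th�rien ce serait imm�diat, mais ici il faut justifier que la limite dans $\cH$ des quotients $\det / \prod_{F} \mlog{[j_0]}{n_0}^{c_{j_0}}$, pris sur des familles finies $F$ croissantes, appartient encore � $\cH$. Une fois ce point technique et la surjectivit� de $\sigma_{j_0, n_0}$ acquis, le reste de la d�monstration n'est qu'un assemblage direct des estim�es locales obtenues.
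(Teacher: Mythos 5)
Your proof is correct in spirit but takes a genuinely different route from the paper's. The paper reduces the statement to a lemma about determinants and proves it by a direct differentiation argument: writing $F=\det(g_1,\dots,g_d)$, the $s$-th derivative $F^{(s)}$ is a linear combination of determinants in which at least $d-s$ columns are left unchanged; at a point $u^j\zeta-1$ these unchanged columns all land in $\Fil^j_n W_n$, a subspace of dimension $v_j$, so if $s<d-v_j$ every such determinant vanishes. Hence $F$ vanishes to order $\geq d-v_j$ at each $u^j\zeta-1$, and the divisibility follows. You instead localize $\cH$ at each prime $\pi=\mlog{[j_0]}{n_0}$, exhibit a surjection of $(\cH\otimes\DD)/\LLL{}{N}{\DD}$ onto a $K_{n_0}$-vector space of dimension $c_{j_0}$ annihilated by $\pi$, and deduce $\ord_\pi(\det)\geq c_{j_0}$ by a length count in the DVR $\cH_{(\pi)}$. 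Both arguments establish the same pointwise lower bound on valuations; your length-theoretic version is a clean reformulation, while the paper's derivative argument is slightly more hands-on and avoids invoking localization and finite presentation of the quotient.

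Two small remarks. First, in the definition of $\sigma_{j_0,n_0}$ the twist should be by $\phiun^{n_0+1}$ and not $\phiun^{-(n_0+1)}$ (the condition defining $\LLL{}{N}{\DD}$ is $\phiun^{-(n+1)}\Gg(u^{j}\zeta-1)\in K_n\otimes\Fil^j\DD$, i.e.\ $\Gg(u^j\zeta-1)\in K_n\otimes\phiun^{n+1}\Fil^j\DD$); this does not affect the dimension count. Second, the "difficulty" you single out — passing from finite subproducts to the infinite product in the non-noetherian ring $\cH$ — is real but not specific to your approach: the paper's proof uses exactly the same step implicitly when it passes from "$F$ vanishes to order $\geq d-v_j$ at every $u^j\zeta-1$" to "$F$ is divisible by $(\mloginf{[j]}{N})^{d-v_j}$." The standard way to justify it is that $\cH=\bigcap_{\rho<1}\cH(\rho)$ with each closed-disk algebra $\cH(\rho)$ a PID in which the restriction of $\mloginf{[j]}{N}$ is a polynomial up to a unit, so divisibility in $\cH$ reduces to divisibility in each $\cH(\rho)$, where only finitely many of the $\mlog{[j]}{n}$ have zeros. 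Once this is granted, your argument is complete and correct.
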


On déduit la proposition du lemme suivant :
\begin{lem}
Soit $W$ un $K$-espace vectoriel de dimension $d$ et soient $g_1$, \dots, $g_d$
des éléments de $\cH\oEE\otimes W$. On suppose qu'il existe pour tout entier $n\geq 1$ une
filtration décroissante exhaustive et séparée $\Fil^{j}_n W_n$ de $W_n = K_n \otimes W$
avec $\Fil^{r+1}_n W_n = 0$ telle que
\begin{enumerate}
\item la dimension $v_j$ du $K_n$-espace vectoriel $\Fil^j_n W_n$ est indépendante de $n$ ;
\item pour tout entier $j \leq r$ et toute racine de l'unité $\zeta$ d'ordre $p^n$
$$ g_i(u^{j}\zeta -1) \in \Fil^{j}_n W_n.$$
\end{enumerate}
Le déterminant $\det (g_1, \cdots, g_d)$ calculé dans une base de $W$
est divisible par $\prod_{j \leq r} (\mloginf{[j]}{N})^{d-v_j}$.
\end{lem}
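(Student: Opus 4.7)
The plan is to interpret both sides as elements of $\cH$ and compare their orders of vanishing at each point of the open unit disc. The infinite product $\mloginf{[j]}{N}=\prod_{n\geq N}\xi_n^{(j)}/p$ has only simple zeros, located precisely at the points $\alpha_{j,\zeta}:=u^j\zeta-1$ with $\zeta$ a primitive $p^n$-th root of unity and $n\geq N$. For $(j,\zeta)\neq(j',\zeta')$ in this range one has $\alpha_{j,\zeta}\neq\alpha_{j',\zeta'}$: an equality $u^j\zeta=u^{j'}\zeta'$ would give $u^{j-j'}=\zeta'/\zeta \in (1+p\Zp)\cap\mu_{p^\infty}=\{1\}$ (since $p$ is odd), forcing $\zeta=\zeta'$ and then $j=j'$ because $u$ has infinite order in $1+p\Zp$. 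Hence the zero loci of the various $\mloginf{[j]}{N}$ for $j\leq r$ are pairwise disjoint, and the claimed divisibility reduces to the local assertion that $\det(g_1,\dots,g_d)$ vanishes at each $\alpha_{j,\zeta}$ with $j \leq r$ to order at least $d-v_j$.

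To prove this local statement, fix $\alpha:=\alpha_{j,\zeta}$ and choose a $K_n$-basis $(f_1,\dots,f_d)$ of $W_n=K_n\otimes W$ whose first $v_j$ vectors span $\Fil^j_n W_n$; let $M\in\Gl_d(K_n)$ be the matrix expressing $(f_l)$ in terms of the fixed basis of $W$ (extended by scalars to $W_n$). The coordinates of each $g_i(x)$ in the basis $(f_l)$ are $K_n$-linear combinations of its original coordinates, hence analytic functions of $x$ with values in $K_n$ in a neighborhood of $\alpha$. The hypothesis $g_i(\alpha)\in\Fil^j_n W_n$ says exactly that the last $d-v_j$ of these coordinates vanish at $\alpha$, for every $i$. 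Expanding the determinant of the resulting $d\times d$ matrix as a sum of $d!$ signed products, every term picks one factor from each of those last $d-v_j$ rows, so every term is divisible by $(x-\alpha)^{d-v_j}$ in the local ring, and so is the determinant. This determinant equals $\det(M^{-1})\cdot\det(g_1,\dots,g_d)$, and $\det(M^{-1})\in K_n^\times$ is a unit at $\alpha$; hence $\det(g_1,\dots,g_d)\in\cH$ vanishes at $\alpha$ to order $\geq d-v_j$.

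Running this over all admissible $(j,\zeta)$ and combining with the disjointness of zeros noted above yields the divisibility by $\prod_{j\leq r}(\mloginf{[j]}{N})^{d-v_j}$ in $\cH$. The proof is essentially elementary linear algebra once the zero structure of $\mloginf{[j]}{N}$ is in hand; the only delicate point is verifying that the local order-of-vanishing computation carried out over $K_n\otimes_{\Qp}\cH$ descends to $\cH$, which is immediate since extending scalars by a field does not change the $(x-\alpha)$-adic valuation of a power series. I do not anticipate any other genuine obstacle.
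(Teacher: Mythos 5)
Your proof is correct, but it follows a genuinely different mechanism from the paper's. The paper computes the successive derivatives $F^{(s)}$ of $F=\det(g_1,\ldots,g_d)$ and uses the Leibniz rule: $F^{(s)}$ is a linear combination of determinants in which at least $d-s$ columns are the original, undifferentiated $g_i$. When $s<d-v_j$, those $d-s>v_j$ vectors, evaluated at $u^j\zeta-1$, all lie in the $v_j$-dimensional subspace $\Fil^j_n W_n$ and are therefore $K_n$-linearly dependent, so each such determinant vanishes; hence $F^{(s)}(u^j\zeta-1)=0$ for $s<d-v_j$, giving a zero of order $\geq d-v_j$ with no change of basis and no descent issue. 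You instead pass to a filtration-adapted basis of $W_n$, observe that the bottom $d-v_j$ rows of the resulting matrix vanish at $\alpha=u^j\zeta-1$, and read off the order of vanishing from the permutation expansion of the determinant; this is equally valid but requires the base change over $K_n$ (a harmless constant factor $\det(M)^{-1}\in K_n^\times$) and the brief remark that the $(x-\alpha)$-adic order in $K_n\otimes\cH$ controls divisibility in $\cH$, both of which you correctly handle. Your preliminary observation that the zero loci of the $\mloginf{[j]}{N}$ for distinct $j\leq r$ are pairwise disjoint --- so that the local vanishing orders multiply to give the global divisibility --- is left implicit in the paper and is worth stating; the injectivity argument via $(1+p\Zp)\cap\mu_{p^\infty}=\{1\}$ for $p$ odd is exactly right.
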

\begin{proof} L'argument est classique (\cite{jams}).
Posons $F = \det (g_1, \cdots, g_d)$ et $ v_j = \dim_{K_n}\Fil^{j}_n W_n$.
Il suffit de montrer que pour $j \leq r$ et $\zeta$ racine de l'unité d'ordre $p^n$,
$F^{(s)}(u^j \zeta -1)$
est nul pour $s < d - v_j$.
Or $F^{(s)}$ s'exprime comme une combinaison linéaire de déterminants faisant intervenir $d-s$ des colonnes $g_i$
donc du type
$G = \det (h_1, \cdots, h_{d-s}, \cdots)$ avec $\{h_1, \cdots, h_{d-s}\} \subset \{g_1, \cdots g_d\}$.
Comme les $h_i(u^j\zeta -1)$ pour $0\leq i \leq d-s$ appartiennent au sous-espace $ \Fil^{j}_n W_n$
de dimension $v_j < d-s$, $G$ est nul. On en déduit que $F^{(s)}(u^j \zeta -1)$
est nul pour $s < d - v_j$ et donc que $F$ est divisible par
$(\mloginf{[j]}{N})^{d-v_j}$.
\end{proof}

\begin{thm} On suppose que $\tborne{\lgg{\JJ}}=0$.
Soit $\cB$ une base de $\DD$ adaptée à la filtration $\Fil^\bullet \DD$.
Alors la suite des diviseurs élémentaires
de $\cH\oEE\otimes\oEE\Module{\JJ}{\cB}{N}{\DD}$ dans $\cH\oEE\otimes\oEE \DD$
est
$$[\mloginf{\III{t_{\HT,1}}}{N}; \cdots; \mloginf{\III{t_{\HT,i}}}{N}; \cdots ;
1].$$
\end{thm}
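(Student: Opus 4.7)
Le plan est d'identifier les diviseurs \'el\'ementaires $a_1,\ldots, a_d$ de $\cH\oEE\cdot\Module{\JJ}{\cB}{N}{\DD}$ dans $\cH\oEE\otimes\DD$ en combinant le calcul exact du d\'eterminant (qui fournit leur produit) avec une analyse locale aux premiers engendr\'es par les facteurs cyclotomiques $\xi_n^{(j)}/p$ de $\cH$ (qui r\'epartit les valuations entre les $a_i$). D'abord, le th\'eor\`eme \ref{zndiv} donne $\det \Zinftyu{\JJ}{\cB}{N} = \prod_k \mxiinf{N}{\inclus{\III{t_{\HT,k}}}{\JJ}}$ \`a une unit\'e de $\cH$ pr\`es ; sous l'hypoth\`ese $\tborne{\lgg{\JJ}}=0$, la proposition \ref{xin} permet de remplacer chaque facteur par $\mloginf{\III{t_{\HT,k}}}{N}$ modulo une unit\'e de $\Zp[[x]]$. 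Ainsi $\prod_i a_i = \prod_k \mloginf{\III{t_{\HT,k}}}{N}$, ce qui fixe le produit des diviseurs \'el\'ementaires.

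On fixe ensuite un entier $n\geq N$ et $j\in \II{t_{\HT,1}}{t_{\HT,d}}$, et on travaille modulo l'id\'eal premier $\mathfrak{p}_{n,j}=(\xi_n^{(j)}/p)$ de $\cH$, dont les z\'eros sont les $u^j\zeta-1$ pour $\zeta$ racine primitive d'ordre $p^n$. La valuation $v_{\mathfrak{p}_{n,j}}(\mloginf{\III{t_{\HT,k}}}{N})$ vaut $1$ si $t_{\HT,k}<j$ et $0$ sinon, d'o\`u $\sum_i v_{\mathfrak{p}_{n,j}}(a_i)= d-v_j$ avec $v_j=\dim_\EEE\Fil^j\DD$ par la premi\`ere \'etape. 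D'autre part, l'inclusion $\Module{\JJ}{\cB}{N}{\DD}\subset \LLL{}{N}{\DD}$ (th\'eor\`eme \ref{zndivbis}) impose que tout $g\in \cH\oEE\cdot\Module{\JJ}{\cB}{N}{\DD}$ v\'erifie $g(u^j\zeta-1)\in \phiun^{-(n+1)}(K_n\otimes\Fil^j\DD)$, sous-espace de $K_n$-dimension $v_j$. Via la forme normale de Smith de l'inclusion $\cH\oEE\cdot\Module{\JJ}{\cB}{N}{\DD}\subset \cH\oEE\otimes\DD$ (dont la libert\'e des deux c\^ot\'es est assur\'ee par le r\'esultat de Berger rappel\'e dans la preuve de la proposition \ref{prop:rang}), la dimension de l'image de l'\'evaluation en $u^j\zeta-1$ est exactement $\lgg{\{i : v_{\mathfrak{p}_{n,j}}(a_i)=0\}}$. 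La majoration par $v_j$ fournit alors $\lgg{\{i : v_{\mathfrak{p}_{n,j}}(a_i)\geq 1\}}\geq d-v_j$, ce qui, combin\'e au total $d-v_j$, force exactement $d-v_j$ valuations \`a valoir $1$ (et les autres \`a \^etre nulles).

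Comme $\prod_i a_i$ n'a de support qu'en les $\mathfrak{p}_{n,j}$ pour $n\geq N$ et $j\in \II{t_{\HT,1}}{t_{\HT,d}}$, chaque $a_i$ est un produit d'atomes de cette forme \`a une unit\'e pr\`es. Les multiensembles de valuations locales calcul\'es ci-dessus co\"incident avec ceux des $\mloginf{\III{t_{\HT,k}}}{N}$ et, compte tenu de la normalisation $(a_d)\subset\cdots\subset(a_1)$, identifient la suite des $a_i$ \`a la famille annonc\'ee, avec $\mloginf{\III{t_{\HT,d}}}{N}=1$. Le point technique d\'elicat sera de justifier proprement l'\'egalit\'e entre la dimension de l'image d'une \'evaluation et le cardinal $\lgg{\{i : v_{\mathfrak{p}_{n,j}}(a_i)=0\}}$ : elle repose sur l'existence et l'unicit\'e de la forme normale de Smith dans $\cH\oEE$, qui d\'ecoule du caract\`ere de B\'ezout de $\cH$ et du th\'eor\`eme de Berger d\'ej\`a utilis\'e dans la preuve de la proposition \ref{prop:rang}.
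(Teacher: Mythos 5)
Your proposal is correct, but it takes a genuinely different route from the paper's own proof. The paper reads off the Smith invariants directly from the factorization of the matrix of $\Zinftyu{\JJ}{\cB}{N}$ in the base $\cB$ as the infinite product $\prod_{n\geq N}\UU{\cB}{\np}\,\mathrm{diag}\!\left(\txin{n}{\inclus{\III{t_{\HT,v}}}{\JJ}}\right)\UU{\cB}{\nm}$: since $\VV{\cB}$ is invertible in $\cH$, each factor has Smith invariants $[\txin{n}{\inclus{\III{t_{\HT,v}}}{\JJ}}]_v$, and the hypothesis $\tborne{\lgg{\JJ}}=0$ makes factors at different levels $n\neq m$ pairwise coprime (they are the products $\prod_{j\in \III{t_v}}\xi_n^{(j)}$ up to units of $\cH$), so the multiplicativity lemma for Smith invariants of products of matrices with coprime invariants applies at each stage and passes to the limit. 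You instead pin down the $a_i$ by combining the determinant identity (giving $\prod_i a_i$) with a local dimension count at each prime $\mathfrak{p}_{n,j}=(\xi_n^{(j)}/p)$, using the \emph{defining} inclusion $\Module{\JJ}{\cB}{N}{\DD}\subset\LLL{}{N}{\DD}$ to bound the rank of the evaluation at $u^j\zeta-1$ by $\dim_{K_n}K_n\otimes\Fil^j\DD$, and then letting the sum-versus-count inequality force all local valuations to be $0$ or $1$. This is more intrinsic (it explains \emph{why} the divisors are the stated ones by appealing only to the interpolation conditions, not to the particular construction of $\Zinftyu{\JJ}{\cB}{N}$), at the price of invoking the Smith normal form over $\cH$ for the pair of free modules (to be justified via the B\'ezout/Fr\'echet-closedness machinery already used in the proof of Proposition \ref{prop:rang}); the paper's argument sidesteps this by working directly with the explicit matrix product and needs only the multiplicativity lemma. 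Both approaches use Theorem \ref{zndiv} for the determinant and Proposition \ref{xin}/Corollary \ref{aunitepres} to replace $\mxiinf{N}{\cdot}$ by $\mloginf{\cdot}{N}$ up to units; the step you flag as the technical delicacy (identifying the evaluation rank with $\#\{i : v_{\mathfrak{p}_{n,j}}(a_i)=0\}$) is indeed correct once a Smith factorization $A=U\,\mathrm{diag}(a_i)\,V$ with $U,V\in\Gl_d(\cH)$ is granted, since evaluating the units at $u^j\zeta-1$ gives elements of $\Gl_d(K_n)$.
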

\begin{proof}
On utilise l'expression de la matrice de $\Zinftyu{\JJ}{\cB}{N}$ dans la base $\cB$
comme produit de matrices de la forme
$$\UU{\cB}{\np}diag([\txin{n}{\inclus{\III{t_{\HT,v}}}{\JJ}}]) \UU{\cB}{\nm}$$
pour $n\geq N$.
La matrice $\VV{\cB}$ est inversible dans $\cH\oEE$
puisque à coefficients dans $\EEE$ et de déterminant non nul. Il suffit donc d'appliquer le
lemme qui suit en remarquant que pour $n\geq 1$, $m \geq 1$,
$\txin{n}{\inclus{\III{t_1}}{\JJ}}$ est premier à $\txin{m}{\inclus{\III{t_2}}{\JJ}}$
si $n \neq m$:
ils sont égaux à une unité près de $\cH$ respectivement
à $\prod_{j \in \III{t_1}} \xi_n^{(j)}$ et à $\prod_{j \in \III{t_2}} \xi_m^{(j)}$
sous l'hypothèse que $\tborne{\lgg{\JJ}}=0$.
\end{proof}
\begin{lem}
Soient $A$ et $B$ deux matrices dont les invariants de Smith sont respectivement
$[f_1; \cdots ; f_d ]$ et $[g_1; \cdots ; g_d ]$. Si les $f_i$ et les $g_j$ sont premiers entre eux,
les invariants de Smith de $C = AB$ sont $[f_1 g_1; \cdots;f_d g_d] $.
\end{lem}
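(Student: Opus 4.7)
La stratégie générale est d'utiliser la caractérisation classique des invariants de Smith par les PGCD des mineurs : pour une matrice $M$ de taille $d\times d$, notons $d_k(M)$ le PGCD de tous les mineurs de taille $k\times k$ de $M$ (avec la convention $d_0(M)=1$). Si les invariants de Smith $[h_1;\cdots;h_d]$ sont normalisés de sorte que $h_1\mid h_2\mid\cdots\mid h_d$ (convention du texte, où $(h_d)\subset\cdots\subset(h_1)$), alors $d_k(M)=h_1 h_2 \cdots h_k$ à une unité près. Le but est donc de montrer que $d_k(C)=d_k(A)\cdot d_k(B)$ pour tout $k$, et de vérifier que la suite $(f_i g_i)_i$ est bien en ordre de divisibilité croissant.

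La première moitié, $d_k(A)\cdot d_k(B)\mid d_k(C)$, s'obtient directement par la formule de Cauchy-Binet. En effet, tout mineur $k\times k$ de $C=AB$ s'écrit $\sum_{|I|=k}\det(A_{\cdot,I})\det(B_{I,\cdot})$, où $A_{\cdot,I}$ est la sous-matrice de $A$ à colonnes indexées par $I$ et $B_{I,\cdot}$ la sous-matrice de $B$ à lignes indexées par $I$. Chaque produit $\det(A_{\cdot,I})\det(B_{I,\cdot})$ est divisible par $d_k(A)\cdot d_k(B)$, donc $d_k(C)$ aussi.

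Pour la divisibilité réciproque $d_k(C)\mid d_k(A)\cdot d_k(B)$, j'utiliserais l'hypothèse de coprimalité par localisation. Soit $\mathfrak{p}$ un idéal premier (ou plus concrètement, un facteur irréductible dans $\Zp[[x]]$ après préparation de Weierstrass, puisque les $f_i$ et $g_j$ pertinents ici sont essentiellement des produits de facteurs distingués $\mloginf{[j]}{N}$ et d'unités). Par coprimalité, soit tous les $f_i$ sont inversibles modulo $\mathfrak{p}$, soit tous les $g_j$ le sont. Dans le premier cas, $A$ devient inversible après localisation en $\mathfrak{p}$, donc $C$ et $B$ ont même forme de Smith localement, d'où $d_k(C)_\mathfrak{p}=d_k(B)_\mathfrak{p}=d_k(A)_\mathfrak{p}\cdot d_k(B)_\mathfrak{p}$ puisque $d_k(A)_\mathfrak{p}$ est une unité. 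Le cas symétrique se traite de même. En recollant, on obtient l'égalité globale $d_k(C)=d_k(A)\cdot d_k(B)$.

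Il reste à vérifier que $(f_1 g_1, f_2 g_2, \ldots, f_d g_d)$ est bien en ordre de divisibilité croissant, ce qui est immédiat puisque $f_i\mid f_{i+1}$ et $g_i\mid g_{i+1}$ entraîne $f_i g_i\mid f_{i+1}g_{i+1}$. Ainsi les invariants de Smith de $C$ sont bien $[f_1 g_1;\cdots; f_d g_d]$. L'obstacle principal est de justifier soigneusement la localisation dans le contexte de l'anneau $\cH$ qui n'est pas un anneau principal, mais cela ne pose pas de véritable difficulté car les éléments en jeu (produits de $\mloginf{\cdot}{N}$) se factorisent bien via le théorème de préparation de Weierstrass en polynômes distingués dans $\Zp[[x]]$, où la théorie des diviseurs élémentaires se comporte comme dans un anneau factoriel.
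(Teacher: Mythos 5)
Your proof is correct and shares the paper's starting point, namely the characterisation of the Smith invariants via gcds of $k\times k$ minors, but the execution differs in two places. The paper first reduces $A$ to diagonal Smith form (replacing $B$ by $UB$ with $U\in GL_d(\cH)$); with $A=\mathrm{Diag}(f_1,\dots,f_d)$, every $k\times k$ minor of $C=AB$ is then a \emph{single} product of a minor of $A$ and a minor of $B$, rather than a Cauchy--Binet sum, and the divisibility $d_k(A)d_k(B)\mid d_k(C)$ drops out without Cauchy--Binet. The converse divisibility is handled very tersely in the paper, by appealing to the coprimality and the equality of determinants. You keep $A$ and $B$ untouched, derive the easy divisibility from the Cauchy--Binet expansion (each summand being divisible by $d_k(A)d_k(B)$), and establish the converse by an explicit localisation: at any prime $\mathfrak p$, coprimality makes one of $A$, $B$ invertible over $\cH_{\mathfrak p}$, so $C$ inherits the local Smith data of the other, and one glues over all $\mathfrak p$. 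This localisation step makes the role of the coprimality hypothesis completely transparent, whereas the paper leaves it implicit (its assertion that $h_i\cdots h_d=(f_i\cdots f_d)(g_i\cdots g_d)$ is, in substance, the same local argument stated as a global gcd identity). Your final caveat about $\cH$ not being principal is appropriate: the paper works in $\cH$ as a B\'ezout domain with a divisor theory, and in the actual application the $f_i,g_j$ are the $\txin{n}{\inclus{\cdot}{\JJ}}$, which under the running hypothesis $\tborne{\lgg{\JJ}}=0$ are polynomials up to units of $\Zp[[x]]$, so the reduction you sketch is available; just note that the localisation should be understood at zeros of these functions rather than literally at irreducibles of $\Zp[[x]]$.
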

\begin{proof}
On utilise la caractérisation de l'invariant de Smith $f_i \cdots f_d$ comme étant le pgcd
des mineurs d'ordre $d-i+1$ de la matrice $A$. On peut supposer que $A$ est sous forme diagonale
$Diag(f_1,\cdots, f_d)$ quitte à multiplier $B$ à gauche par une matrice
de $GL_d(\cH)$. Si $B=((b_{ij}))$, $AB = ((f_i b_{i,j}))$.
Un mineur d'ordre $i$ de $AB$ est alors le produit de deux mineurs d'ordre $i$
respectivement de $A$ et $B$. On en déduit
que $h_i \cdots h_d$ est le produit de $f_i \cdots f_d$ et de $g_i \cdots g_d$
lorsque les $f_j$ et les $g_k$ sont premiers entre eux. Par induction, on en déduit que $h_j | f_j g_j$.
Il reste à remarquer que le produit des $h_j$ est égal à $\prod_j f_j \prod_j g_j$
pour conclure.
\end{proof}
\begin{reme}
Dans le cas où $\tborne{\lgg{\JJ}}\neq 0$, les $\mxiinf{N}{\inclus{\III{t_{\HT,i}}}{\JJ}}$
ne se divisent pas forcément à cause de facteurs supplémentaires.
Le pgcd de $\mxiinf{N}{\inclus{\III{t_{\HT,i}}}{\JJ}}$ et de
$\mxiinf{N}{\inclus{\III{t_{\HT,j}}}{\JJ}}$
pour $t_{\HT,i}>t_{\HT,j}$ est un multiple de $\mloginf{\III{t_{\HT,i}}}{N}$.
 Expérimentalement, il semble être égal à $\mloginf{\III{t_{\HT,i}}}{N}$.
Ce qui donnerait à espérer que les invariants soient de la forme :
$$[\mloginf{\III{t_{\HT,1}}}{N}; \mloginf{\III{t_{\HT,d-1}}}{N} ; \cdots ;
\mloginf{\III{t_{\HT,j}}}{N}; \cdots ; F]$$
où $F$ est un élément de $\cH$ ne s'annulant pas en les $u^j \zeta -1$ pour
$j \in J$ et $\zeta$ racine de l'unité d'ordre une puissance de $p$.
\end{reme}

\begin{prop}On suppose que $\tborne{\lgg{\JJ}}=0$.
Soit $\cF$ un raffinement du $\varphi$-module filtré $\DD$ et
soient $\cB$ et $\cB_0$ des bases associées à $\cF$ comme dans le paragraphe
\ref{raffinement}.
La suite $(\Zinftyu{\JJ}{\cB}{N}(v))_{v\in \cB_0}$ d'éléments de $\Module{\JJ}{\cB}{N}{\DD}$
est une base adaptée aux diviseurs élémentaires de
$\cH\oEE \otimes\oEE \Module{\JJ}{\cB}{N}{\DD}$ dans $\cH\oEE \otimes\oEE\DD$.
\end{prop}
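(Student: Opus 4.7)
Le plan est d'expliciter la matrice $M$ de $\Zinftyu{\JJ}{\cB}{N}$ dans la base $\cB_0$ et de v\'erifier directement le crit\`ere de base adapt\'ee. Par les calculs de la preuve du th\'eor\`eme \ref{znraff}, $M$ est triangulaire sup\'erieure, de coefficients diagonaux les $\mxiinf{N}{\inclus{\III{t_{\HT,i}}}{\JJ}}$. Sous l'hypoth\`ese $\tborne{\lgg{\JJ}}=0$, le corollaire \ref{aunitepres} donne $\mxiinf{N}{\inclus{\III{t_{\HT,i}}}{\JJ}}=u_i\cdot\mloginf{\III{t_{\HT,i}}}{N}$ avec $u_i$ unit\'e de $\Zp[[x]]$. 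Posant $a_j=\mloginf{\III{t_{\HT,j}}}{N}$ (avec $a_d=1$), il suffit de prouver que la matrice $M'=(a_j^{-1}M_{i,j})_{i,j}$ appartient \`a $\Gl_d(\cH)$. Le d\'eterminant de $M'$ vaut $\prod_i u_i$, unit\'e de $\cH$; il reste donc \`a \'etablir la divisibilit\'e $a_j\mid M_{i,j}$ dans $\cH$ pour tout $i<j$ (les cas $i\geq j$ \'etant imm\'ediats).

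C'est l\`a le point central. Comme $\Zinftyu{\JJ}{\cB}{N}(e_j)\in\LLL{}{N}{\DD}$ par le th\'eor\`eme \ref{zndiv3}, pour tout entier $k\in\,]t_{\HT,j},t_{\HT,d}]$ et toute racine $\zeta$ d'ordre $p^n$ avec $n\geq N$, on a $\phiun^{-(n+1)}\Zinftyu{\JJ}{\cB}{N}(e_j)(u^k\zeta-1)\in K_n\otimes\Fil^k\DD$. Si $t_{\HT,l}$ d\'esigne le plus petit poids de Hodge-Tate sup\'erieur ou \'egal \`a $k$ (donc $l>j$), la d\'ecomposition $\DD=\cF_{l-1}\oplus\Fil^{t_{\HT,l}}\DD$ issue du raffinement non critique montre que les $l-1$ premi\`eres coordonn\'ees de ce vecteur dans la base $\cB$ s'annulent. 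En d\'eveloppant ces conditions via le changement de base unipotent triangulaire sup\'erieur $P^{-1}$ et en exploitant l'annulation des $M_{m,j}$ pour $m>j$, une r\'ecurrence descendante sur $i$ allant de $j$ \`a $1$ donne $M_{i,j}(u^k\zeta-1)=0$ pour $1\leq i\leq j$: l'\'equation d'annulation de la $i$-\`eme coordonn\'ee s'\'ecrit en effet $\alpha_i^{-(n+1)}M_{i,j}(u^k\zeta-1)+\sum_{j\geq m>i}\lambda'_{i,m}\alpha_m^{-(n+1)}M_{m,j}(u^k\zeta-1)=0$, o\`u les termes de la somme sont d\'ej\`a nuls par hypoth\`ese de r\'ecurrence.

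Les points $u^k\zeta-1$ ainsi obtenus, pour $k\in\,]t_{\HT,j},t_{\HT,d}]$ et $\zeta$ d'ordre $p^n\geq p^N$, co\"incident avec l'ensemble des z\'eros, tous simples et deux \`a deux distincts, de $\mloginf{\III{t_{\HT,j}}}{N}=\prod_{k\in\,]t_{\HT,j},t_{\HT,d}]}\prod_{n\geq N}\xi_n^{(k)}/p$. L'analyticit\'e de $M_{i,j}$ dans $\cH$ donne alors imm\'ediatement la divisibilit\'e voulue, ce qui ach\`eve la preuve. L'obstacle principal r\'eside dans la traduction de l'appartenance \`a $\LLL{}{N}{\DD}$ en annulations de coefficients de $M$ dans la base $\cB_0$ : il faut manipuler soigneusement le changement de base $P$ entre $\cB_0$ (vecteurs propres) et $\cB$ (adapt\'ee \`a la fois \`a la filtration et au raffinement), mais la structure triangulaire sup\'erieure commune \`a $P$, $P^{-1}$ et $M$ rend la r\'ecurrence triviale une fois la traduction effectu\'ee.
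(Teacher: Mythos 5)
Your proof is correct, but it takes a genuinely different route from the paper's. The paper works \emph{upstream}, at finite level: it observes directly from the formula $T_n=P\,\mathrm{diag}(\txin{n}{\inclus{\III{t_{\HT,k}}}{\JJ}})P^{-1}$ that column $j$ of $T_n$ is divisible by $\mlog{\III{t_{\HT,j}}}{n}$ (using $P$ upper triangular, so column $j$ only involves the $k\leq j$, each of whose $\txin{n}{\inclus{\III{t_{\HT,k}}}{\JJ}}$ is a multiple of $\mlog{\III{t_{\HT,j}}}{n}$), then propagates this divisibility to $Z_m$ for all $m\geq n$ via upper triangularity of the recursion, and finally passes to the limit to obtain divisibility of column $j$ of $Z_\infty$ by $\mloginf{\III{t_{\HT,j}}}{N}$. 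You instead work \emph{downstream}, on the limit: you invoke the already-proved membership $\Zinftyu{\JJ}{\cB}{N}(e_j)\in\LLL{}{N}{\DD}$ (théorème \ref{zndivbis}) and the non-critical decomposition $\DD=\cF_{l-1}\oplus\Fil^{t_{\HT,l}}\DD$ to force vanishing of the relevant $\cB$-coordinates at the points $u^k\zeta-1$, then unwind through the unipotent change of base $P^{-1}$ by a clean descending induction (using the upper triangularity of $Z_\infty$ and the automatic vanishing of the diagonal term $M_{j,j}$ there) to get $M_{i,j}(u^k\zeta-1)=0$ for all $i\leq j$, and conclude by the simplicity and distinctness of these zeros of $\mloginf{\III{t_{\HT,j}}}{N}$. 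Your approach makes more transparent \emph{why} the basis is adapted (it is forced by membership in $\LLL{}{N}{\DD}$ together with the refinement structure), while the paper's is a more self-contained verification on the approximating matrices that avoids having to discuss divisibility by the infinite product $\mloginf{\III{t_{\HT,j}}}{N}$ via its zero set — though that step in your argument is fine, since a function in $\cH$ vanishing at all the (simple, pairwise distinct) zeros $u^k\zeta-1$ of $\mloginf{\III{t_{\HT,j}}}{N}$ is divisible by it in $\cH$, the quotient being again holomorphic on the open unit disk.
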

\begin{proof}
On reprend les notations de la démonstration du théorème \ref{znraff}
et on calcule dans la base $\cB_0$.
On remarque que $\mlog{\III{t_{\HT,i}}}{n}$ divise les $i$ premières colonnes
de $T_{n}$ et donc de $\varphi_{\cB_0}^{\np} T_n \varphi_{\cB_0}^{\nm}$.
La matrice $Z_n$ étant triangulaire supérieure, on en déduit que
les $i$ premières colonnes de la matrice de
$\ZZZ{\JJ}{\cB}{n}{N}$ dans la base $\cB_0$ sont aussi divisibles
par $\mlog{\III{t_{\HT,i}}}{n}$ et qu'il en est de même
de la matrice de $\ZZZ{\JJ}{\cB}{m}{N}$ pour $m\geq n$.
Donc, les $i$ premières colonnes de la matrice de
$\ZZZ{\JJ}{\cB}{m}{N}$ dans la base $\cB_0$
sont divisibles par $\mlog{\III{t_{\HT,i}}}{n}$.
\end{proof}
\begin{thm}\label{prop}
On suppose que $\tborne{\lgg{\JJ}}=0$.
Soit $\cB$ une base de $\DD$ adaptée à la filtration $\Fil^\bullet \DD$.
Alors, $\LLL{\JJ}{N}{\DD}$ est égal à $\cH\oEE \otimes\oEE\Module{\JJ}{\cB}{N}{\DD}$.
En particulier, l'indice de $\LLL{\JJ}{N}{\DD}$ dans $\cH\oEE \otimes\oEE \DD$
est $$\prod_{j\leq t_{\HT,d}} (\mloginf{[j]}{N})^{\dim \DD/\Fil^{j}\DD}$$ et
les diviseurs élémentaires de $\LLL{\JJ}{N}{\DD}$ dans $\cH\oEE \otimes\oEE \DD$
sont $$[\mloginf{\III{t_{\HT,1}}}{N}; \cdots; \mloginf{\III{t_{\HT,j}}}{N}; \cdots ;
 1].$$
\end{thm}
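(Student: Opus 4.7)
L'objectif est d'établir l'égalité $\LLL{\JJ}{N}{\DD} = \cH\otimes\Module{\JJ}{\cB}{N}{\DD}$, dont l'indice et les diviseurs élémentaires se déduiront immédiatement du théorème donnant les diviseurs élémentaires de $\cH\otimes\Module{\JJ}{\cB}{N}{\DD}$ établi plus haut. Le plan est de comparer les déterminants des deux sous-$\cH$-modules de $\cH\otimes\DD$ et d'utiliser la divisibilité fournie par la proposition \ref{detdiv}.

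Je rappellerais d'abord que l'inclusion $\cH\otimes\Module{\JJ}{\cB}{N}{\DD}\subset \LLL{\JJ}{N}{\DD}$ est donnée par le théorème \ref{zndivbis}, et que les deux modules sont libres de rang $d$ (par la proposition \ref{prop:rang} pour le second, et parce que le premier a un déterminant non nul d'après le théorème déjà établi sur les diviseurs élémentaires de $\cH\otimes\Module{\JJ}{\cB}{N}{\DD}$). Il suffit alors de vérifier que l'indice du premier dans le second est une unité de $\cH$.

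Je vérifierais ensuite l'identité combinatoire
$$\prod_{i=1}^d \mloginf{\III{t_{\HT,i}}}{N}= \prod_{j\leq t_{\HT,d}}\mloginf{[j]}{N}^{\dim \DD/\Fil^j \DD},$$
qui résulte de ce que $\dim \DD/\Fil^j\DD$ compte exactement le nombre d'indices $i$ avec $t_{\HT,i}<j$, c'est-à-dire le nombre d'occurrences de $\mloginf{[j]}{N}$ dans le produit de gauche. Grâce au théorème précité, le déterminant de $\cH\otimes\Module{\JJ}{\cB}{N}{\DD}$ dans $\cH\otimes\DD$ est ainsi égal, à une unité près, au produit $\prod_{j\leq t_{\HT,d}}\mloginf{[j]}{N}^{\dim \DD/\Fil^j \DD}$. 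Or la proposition \ref{detdiv} affirme que le déterminant de $\LLL{\JJ}{N}{\DD}$ est divisible par cette même quantité. Par multiplicativité des déterminants dans la chaîne d'inclusions $\cH\otimes\Module{\JJ}{\cB}{N}{\DD}\subset \LLL{\JJ}{N}{\DD}\subset \cH\otimes\DD$, le déterminant de la première inclusion est alors forcément une unité, d'où l'égalité des modules.

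Le point le plus délicat sera de justifier proprement la multiplicativité des déterminants et le fait qu'une inclusion de $\cH$-modules libres de même rang à déterminant unité est une égalité ; cela repose sur la théorie des diviseurs élémentaires dans l'anneau de Bezout $\cH$ (théorème de Lazard), dont l'emploi est implicite tout au long de cette section.
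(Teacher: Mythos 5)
Your proposal is correct and follows the paper's own route: you compare the determinant of $\cH\otimes\Module{\JJ}{\cB}{N}{\DD}$ (given by the preceding theorem on $\Zinftyu{\JJ}{\cB}{N}$) with that of $\LLL{\JJ}{N}{\DD}$, using Proposition \ref{detdiv} for the reverse divisibility and concluding by a unit-index argument. You merely spell out explicitly two small steps that the paper leaves implicit --- the combinatorial identity $\prod_{i=1}^d \mloginf{\III{t_{\HT,i}}}{N}=\prod_{j\leq t_{\HT,d}}\mloginf{[j]}{N}^{\dim \DD/\Fil^j \DD}$ and the multiplicativity of determinants along the chain of inclusions --- both of which are correct.
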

\begin{proof}
Le déterminant de $\Zinftyu{\JJ}{\cB}{N}$ dans $\cH\oEE \otimes\oEE \DD$ est
$\prod_{j\leq t_{\HT,d}} (\mloginf{[j]}{N})^{\dim \DD/\Fil^{j}\DD}$. Comme
$\LLL{\JJ}{N}{\DD}$ contient $\cH \otimes \Module{\JJ}{\cB}{N}{\DD}$, le déterminant
de $\LLL{\JJ}{N}{\DD}$ dans $\cH \otimes \DD$ divise
$\prod_{j\leq t_{\HT,d}} (\mloginf{[j]}{N})^{\dim \DD/\Fil^{j}\DD}$,
d'où l'égalité grâce à la proposition \ref{detdiv},
d'où le théorème.
\end{proof}
\begin{cor}On suppose que $\tborne{\lgg{\JJ}}=0$.
Soit $\cB$ une base adaptée à la filtration et adaptée
à un raffinement. La suite d'éléments
$(\Zinftyu{\JJ}{\cB}{N}(v))_{v\in \cB_0}$ de $\LLL{\JJ}{N}{\DD}$
est une base adaptée aux diviseurs élémentaires de
$\LLL{\JJ}{N}{\DD} \subset \cH\oEE \otimes\oEE \DD$.
\end{cor}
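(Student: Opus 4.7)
Le plan est de simplement recoller les deux r�sultats pr�c�dents. D'abord, le th�or�me \ref{prop} (appliqu� � la base $\cB$, qui est adapt�e � la filtration puisque c'est une base adapt�e � un raffinement) donne l'�galit�
\begin{equation*}
\LLL{\JJ}{N}{\DD} = \cH\oEE \otimes_{\oEE} \Module{\JJ}{\cB}{N}{\DD},
\end{equation*}
ainsi que la liste des diviseurs �l�mentaires $[\mloginf{\III{t_{\HT,1}}}{N}; \cdots; \mloginf{\III{t_{\HT,j}}}{N}; \cdots; 1]$ de $\LLL{\JJ}{N}{\DD}$ dans $\cH\oEE \otimes\oEE \DD$.

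Ensuite, la proposition pr�c�dente affirme que sous l'hypoth�se $\tborne{\lgg{\JJ}}=0$, la famille $(\Zinftyu{\JJ}{\cB}{N}(v))_{v\in \cB_0}$ est une base adapt�e aux diviseurs �l�mentaires du $\cH\oEE$-module $\cH\oEE \otimes \Module{\JJ}{\cB}{N}{\DD}$ dans $\cH\oEE \otimes \DD$. Comme ce dernier module co�ncide avec $\LLL{\JJ}{N}{\DD}$ d'apr�s l'�tape pr�c�dente, cette m�me famille est automatiquement une base adapt�e aux diviseurs �l�mentaires de $\LLL{\JJ}{N}{\DD}$ dans $\cH\oEE \otimes \DD$.

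Il n'y a donc aucun travail suppl�mentaire � fournir : le seul point � v�rifier est que les hypoth�ses des deux r�sultats cit�s sont bien satisfaites simultan�ment, ce qui est imm�diat puisqu'une base adapt�e � un raffinement est en particulier adapt�e � la filtration $\Fil^\bullet \DD$. L'obstacle principal a �t� trait� en amont, � savoir la co�ncidence de $\LLL{\JJ}{N}{\DD}$ et de $\cH \otimes \Module{\JJ}{\cB}{N}{\DD}$ (qui repose sur le calcul du d�terminant de $\Zinftyu{\JJ}{\cB}{N}$ et sur la proposition \ref{detdiv}) ainsi que la structure triangulaire sup�rieure de $Z_n$ dans la base $\cB_0$ qui permet de contr�ler la divisibilit� colonne par colonne par les $\mlog{\III{t_{\HT,i}}}{n}$.
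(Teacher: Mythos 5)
Ta preuve est correcte et reprend exactement l'argument implicite du papier : le corollaire, laissé sans démonstration dans le texte, se déduit directement en combinant la proposition précédente (qui fournit la base adaptée de $\Module{\JJ}{\cB}{N}{\DD}$ via la structure triangulaire dans $\cB_0$) et le théorème \ref{prop} (qui identifie $\LLL{\JJ}{N}{\DD}$ à $\cH\oEE\otimes\oEE\Module{\JJ}{\cB}{N}{\DD}$). Rien à ajouter.
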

On déduit du théorème \ref{prop} et du corollaire \ref{cor:divisible}
le théorème suivant.
\begin{thm}\label{thmordre}
On suppose que $\tborne{\lgg{\JJ}}=0$. S'il existe une base $\cB$ adaptée au $\varphi$-module filtré
$ \DD$ engendrant un réseau fortement divisible dans $\DD$,
$\LLL{\JJ}{N}{\DD}$ admet une base formée d'éléments
de pente nulle, explicitement $(\Zinftyu{\JJ}{\cB}{N}(v))_{v\in \cB}$.
\end{thm}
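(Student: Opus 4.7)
The plan is to combine Theorem \ref{prop} and Corollary \ref{cor:divisible} directly, as the author's preamble to the theorem already announces; no new construction is needed.

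First, a basis $\cB$ adapted to the filtered $\varphi$-module is, by definition, adapted to the filtration $\Fil^\bullet \DD$. Under the hypothesis $\tborne{\lgg{\JJ}}=0$, Theorem \ref{prop} then yields the identification $\LLL{\JJ}{N}{\DD} = \cH\oEE \otimes\oEE \Module{\JJ}{\cB}{N}{\DD}$. By construction (Theorem \ref{zndiv}) the module $\Module{\JJ}{\cB}{N}{\DD}$ is generated over $\Lambda\ot$ by the $d$ elements $\Zinftyu{\JJ}{\cB}{N}(v)$, $v\in\cB$, so these $d$ elements generate $\LLL{\JJ}{N}{\DD}$ over $\cH\oEE$. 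They form a basis: by Proposition \ref{prop:rang} the $\cH\oEE$-module $\LLL{\JJ}{N}{\DD}$ has rank exactly $d$, and the determinant of $\Zinftyu{\JJ}{\cB}{N}$ was computed in Theorem \ref{zndiv} to equal, up to a unit of $\cH$, the nonzero product $\prod_{k=1}^{d} \mxiinf{N}{\inclus{\III{t_{\HT,k}}}{\JJ}}$; hence the $d$ generators are linearly independent.

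For the pente, Corollary \ref{cor:divisible} combined with $\tborne{\lgg{\JJ}}=0$ gives $0 \leq \slope{\Module{\JJ}{\cB}{N}{\DD}} \leq 0$. The upper bound in the proof of Theorem \ref{thm:smith} is obtained by controlling $\norm{\phiun^{\nm}\Zinftyu{\JJ}{\cB}{N}}{\cB,\rho_n}$ entrywise in the basis $\cB$, hence applies to each column $\Zinftyu{\JJ}{\cB}{N}(v)$ and shows that each has pente $\leq 0$; the matching lower bound then forces pente exactly $0$. The only mildly subtle point, which I view as the main thing to check, is precisely this transfer of the pente estimate from the module to its individual basis elements; it is immediate once one inspects how the bound is established, so in the end the theorem reduces to a clean book-keeping of statements already in place.
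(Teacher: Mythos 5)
Your proof follows the paper's own route: the paper derives this theorem simply by citing Théorème~\ref{prop} and Corollaire~\ref{cor:divisible}, and your write-up fills in the same deduction, including the (correct) observation that the operator-norm estimates in the proofs of Théorème~\ref{zndivbis} and \ref{thm:smith} are taken coefficientwise in the basis $\cB$ and therefore bound each column $\Zinftyu{\JJ}{\cB}{N}(v)$ individually, giving pente $\leq 0$ for each basis element. One small overstatement: the lower bound $\min_i(\sm{i}-t_{\HT,i})\leq \slope{\Module{\JJ}{\cB}{N}{\DD}}$ in Théorème~\ref{thm:smith} (and its specialization to $0$ in Corollaire~\ref{cor:divisible}) is a bound on the \emph{module} slope, not on each column separately, so it does not by itself ``force pente exactly $0$'' for every individual $\Zinftyu{\JJ}{\cB}{N}(v)$; but this is harmless, since what the statement and its subsequent use (Corollaire~\ref{pente0}) actually require is pente $\leq 0$, which your upper-bound argument already gives.
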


\begin{cor}\label{pente0}Sous les hypothèses du théorème \ref{thmordre},
le $\Lambda\ot$-module $\Module{\JJ}{\cB}{N}{\DD}$ est
le sous-$\Lambda\ot$-module
de $\LLL{\JJ}{N}{\DD}$ engendré par les éléments de pente nulle.
\end{cor}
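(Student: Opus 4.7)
Mon plan consiste à démontrer les deux inclusions $\Module{\JJ}{\cB}{N}{\DD} \subset \Er{\LLL{\JJ}{N}{\DD}}{0}$ et $\Er{\LLL{\JJ}{N}{\DD}}{0} \subset \Module{\JJ}{\cB}{N}{\DD}$ séparément.

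L'inclusion directe est immédiate grâce au théorème \ref{thmordre}: le $\Lambda\ot$-module $\Module{\JJ}{\cB}{N}{\DD}$ est engendré par la famille $\left(\Zinftyu{\JJ}{\cB}{N}(v)\right)_{v\in\cB}$, dont chaque élément est de pente nulle, donc il est contenu dans le sous-$\Lambda\ot$-module engendré par les éléments de pente nulle de $\LLL{\JJ}{N}{\DD}$.

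Pour l'inclusion réciproque, je partirais d'un élément $g$ de $\LLL{\JJ}{N}{\DD}$ de pente $\leq 0$. Le théorème \ref{thmordre} affirmant que $\left(\Zinftyu{\JJ}{\cB}{N}(v)\right)_{v\in\cB}$ est une base du $\cH$-module $\LLL{\JJ}{N}{\DD}$, on dispose d'une décomposition unique $g = \sum_{v\in\cB} f_v \Zinftyu{\JJ}{\cB}{N}(v)$ avec $f_v \in \cH$. Il restera à montrer que chaque $f_v$ appartient à $\Lambda\ot$, c'est-à-dire que $\norm{f_v}{\rho^{1/p^n}}$ reste borné quand $n \to \infty$ pour un $\rho < 1$. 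L'égalité
\[
p^{n t_{\HT,d}}\phiun^{-n} g = \sum_{v\in\cB} f_v \cdot \left( p^{n t_{\HT,d}}\phiun^{-n}\Zinftyu{\JJ}{\cB}{N}(v)\right)
\]
combine une borne globale sur le membre de gauche (hypothèse de pente sur $g$) avec des bornes individuelles sur chaque $p^{n t_{\HT,d}}\phiun^{-n}\Zinftyu{\JJ}{\cB}{N}(v)$ (pente nulle des générateurs). Pour en déduire une borne sur chaque $f_v$ séparément, j'inverserais la matrice $Z_\infty$ de $\Zinftyu{\JJ}{\cB}{N}$ dans la base $\cB$: son déterminant vaut, à une unité de $\cH$ près, $\prod_i \mloginf{\III{t_{\HT,i}}}{N}$ (théorème \ref{prop}), et $Z_\infty^{-1}$ envoie bien $\LLL{\JJ}{N}{\DD}$ dans $\cH\otimes\DD$ puisque les composantes dans la base $\cB$ de tout élément de $\LLL{\JJ}{N}{\DD}$ contiennent les zéros nécessaires pour compenser les facteurs logarithmiques du déterminant, par la structure des diviseurs élémentaires donnée dans le théorème \ref{prop}.

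La principale difficulté sera le contrôle quantitatif de cette inversion: il faudra vérifier qu'après division par les facteurs logarithmiques du déterminant, les quotients restent bornés en norme $\norm{\cdot}{\rho^{1/p^n}}$. Ce contrôle devrait s'obtenir en reprenant les estimations des propositions \ref{convergence} et \ref{prop:majoration} déjà utilisées dans la preuve du théorème \ref{zndiv}; dans le cas particulier où $\cB$ est aussi adaptée à un raffinement (corollaire suivant le théorème \ref{prop}), la matrice $Z_\infty$ est triangulaire supérieure et l'inversion s'effectue par substitution arrière explicite, ce qui rend l'extraction des bornes sur les $f_v$ transparente.
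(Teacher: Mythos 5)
Ta première inclusion $\Module{\JJ}{\cB}{N}{\DD}\subset\Er{\LLL{\JJ}{N}{\DD}}{0}$ est correcte et rédigée exactement comme il faut. En revanche, pour l'inclusion réciproque tu identifies bien la difficulté (montrer que les coefficients $f_v$ de la décomposition $g=\sum f_v\,\Zinftyu{\JJ}{\cB}{N}(v)$ sont dans $\Lambda\ot$) mais tu ne la résous pas : ton dernier paragraphe reste au stade du programme (\og ce contrôle devrait s'obtenir\dots\fg), et le point crucial --- inverser $Z_\infty$ tout en contrôlant $\norm{\cdot}{\rho^{1/p^n}}$, y compris à travers les facteurs $\mloginf{\cdot}{N}$ du déterminant qui sont de norme non bornée --- n'est ni établi ni ramené à un énoncé déjà démontré dans l'article. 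Tel quel, c'est un plan, pas une preuve.

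La route suivie par l'article est sensiblement différente et évite précisément cette inversion quantitative. On pose $X=\Er{\LLL{\JJ}{N}{\DD}}{0}$; le théorème \ref{thmordre} donne à la fois $\cH X = \LLL{\JJ}{N}{\DD}$ (car les générateurs $\Zinftyu{\JJ}{\cB}{N}(v)$, étant de pente nulle, appartiennent à $X$) et $\cH\Module{\JJ}{\cB}{N}{\DD} = \LLL{\JJ}{N}{\DD}$. On a de plus l'inclusion $\Module{\JJ}{\cB}{N}{\DD}\subset X$. Le passage de là à l'égalité ne repose pas sur une majoration des $f_v$ mais sur un argument structurel de changement de base : tout changement de base de $\Lambda\ot$-réseaux ayant même $\cH$-enveloppe est donné par une matrice dont le déterminant est inversible dans $\cH$, et le fait que les unités de $\cH$ soient les unités de $\Lambda\ot$ force alors ce changement de base à être inversible déjà sur $\Lambda\ot$. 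C'est donc une preuve par comparaison de déterminants (ou de diviseurs élémentaires), qualitativement différente de la tienne; elle a l'avantage d'éviter toute estimation supplémentaire sur $Z_\infty^{-1}$, au prix d'invoquer ce fait sur les unités de $\cH$. Si tu veux compléter ta variante, il faudrait réellement extraire de la preuve du théorème \ref{zndivbis} (ou de \ref{thm:smith}) une minoration des produits $B_{n,\cB}\cdots B_{N,\cB}$ et non seulement une majoration; ce n'est pas immédiat et n'est pas fait dans l'article.
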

\begin{proof}
Soit $X$ le $\Lambda\ot$-sous-module de $\LLL{\JJ}{N}{\DD}$ des éléments de pente $\leq 0$.
Par le théorème \ref{thmordre}, on a
$\LLL{\JJ}{N}{\DD} = \cH X=\cH\Module{\JJ}{\cB}{N}{\DD}$. Le résultat se déduit alors
de l'inclusion $\Module{\JJ}{\cB}{N}{\DD} \subset X$ et de ce que les éléments inversibles
de $\cH$ sont les éléments inversibles de $\Lambda\ot$.
\end{proof}
\begin{cor} Soit $\zeta$ une racine de l'unité d'ordre $p^n$.
Sous les hypothèses du théorème \ref{thmordre},
l'application d'évaluation
$$\LLL{\JJ}{N}{\DD} \to \prod_{j\in \III{t_{\HT,1}}} K_n \otimes \Fil^{j} \DD$$
induite par
$$ G \mapsto ((1\otimes \varphi)^{\nm}G(u^j \zeta-1) )_{j}$$
et restreinte aux éléments de pente nulle est surjective.
\end{cor}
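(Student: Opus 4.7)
Mon approche consiste, apr�s r�duction � l'analyse de la base $(\Zinftyu{\JJ}{\cB}{N}(v))_{v\in\cB}$ du module des �l�ments de pente nulle, � �tablir une surjectivit� ponctuelle en chaque $u^j\zeta-1$ et � recoller par interpolation. D'apr�s le th�or�me~\ref{thmordre} et le corollaire~\ref{pente0}, le sous-$\Lambda\ot$-module des �l�ments de pente nulle de $\LLL{\JJ}{N}{\DD}$ est pr�cis�ment $\Module{\JJ}{\cB}{N}{\DD}$, libre sur $\Lambda\ot$ de base $(\Zinftyu{\JJ}{\cB}{N}(v))_{v\in\cB}$. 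La question se r�duit donc � la surjectivit� de
$$\Module{\JJ}{\cB}{N}{\DD} \longrightarrow \prod_{j\in \III{t_{\HT,1}}} K_n \otimes \Fil^{j} \DD,\qquad G\mapsto (\phiun^{\nm}G(u^j\zeta-1))_j.$$

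Fixons $j\in\III{t_{\HT,1}}$. Par la congruence \eqref{zndiv1}, $\Zinftyu{\JJ}{\cB}{N}(u^j\zeta-1)=\ZZZ{\JJ}{\cB}{n}{N}(u^j\zeta-1)$ puisque $\bomega{m}{\JJ}(u^j\zeta-1)=0$ pour $m\geq n$, et d�rouler la r�currence d�finissant les $\ZZZ{\JJ}{\cB}{m}{N}$ donne
$$\phiun^{\nm}\Zinftyu{\JJ}{\cB}{N}(u^j\zeta-1)=\TH{\Delta_\cB}{n}{\JJ}(u^j\zeta-1)\circ\phiun^{-1}\circ\cdots\circ\TH{\Delta_\cB}{N+1}{\JJ}(u^j\zeta-1)\circ\phiun^{-(N+2)}.$$
Sous l'hypoth�se $\tborne{\lgg{\JJ}}=0$, le corollaire~\ref{aunitepres} donne $\txin{m}{\inclus{\III{t_{\HT,i}}}{\JJ}}=V\cdot\mlog{\III{t_{\HT,i}}}{m}$ avec $V$ unit� de $\Zp[[x]]$. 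Pour $N+1\leq m<n$, tous les coefficients $\txin{m}{\inclus{\III{t_{\HT,i}}}{\JJ}}(u^j\zeta-1)$ sont non nuls : aucun facteur $\xi_m(u^s\zeta-1)$ ne s'annule, car l'annulation exigerait $(u^s\zeta)^{p^m}=1$, donc $m\geq n$. Ces op�rateurs interm�diaires sont donc inversibles. Pour $m=n$ en revanche, $\txin{n}{\inclus{\III{t_{\HT,i}}}{\JJ}}(u^j\zeta-1)$ s'annule exactement sur $\Delta_\cB^{t_{\HT,i}}$ avec $t_{\HT,i}<j$ (puisque $\omega_n^{(j)}(u^j\zeta-1)=0$) et reste non nul sinon : $\TH{\Delta_\cB}{n}{\JJ}(u^j\zeta-1)$ agit alors, � unit�s pr�s, comme la projection de $\DD\otimes K_n$ sur $\Fil^j\DD\otimes K_n$. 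La compos�e est donc surjective de $\DD\otimes K_n$ sur $K_n\otimes\Fil^j\DD$.

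Pour le recollement global, l'observation cl� est que les orbites sous $\Gal(K_n/\Qp)$ des points $u^j\zeta-1$, $j\in\III{t_{\HT,1}}$, sont deux � deux disjointes : une �galit� $u^j\zeta'=u^{j'}\zeta''$ entre racines d'ordre $p^n$ forcerait $u^{j-j'}$ � �tre une racine de l'unit� de $1+p\Zp$, donc $j=j'$. L'interpolation de Lagrange avec contrainte d'�quivariance galoisienne fournit alors, pour toute famille $(c_{j,v})\in K_n^{\lgg{\III{t_{\HT,1}}}\cdot d}$, des polyn�mes $f_v\in\Qp[x]\subset\Lambda\ot$ tels que $f_v(u^j\zeta-1)=c_{j,v}$. �tant donn� $(w_j)\in\prod_j K_n\otimes\Fil^j\DD$, la surjectivit� ponctuelle permet d'�crire $w_j=\sum_v c_{j,v}\,\phiun^{\nm}\Zinftyu{\JJ}{\cB}{N}(v)(u^j\zeta-1)$ pour chaque $j$; alors $G=\sum_v f_v\Zinftyu{\JJ}{\cB}{N}(v)\in\Module{\JJ}{\cB}{N}{\DD}$ v�rifie $\phiun^{\nm}G(u^j\zeta-1)=w_j$ pour tout $j$.

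Le principal obstacle est l'analyse ponctuelle : justifier rigoureusement que $\TH{\Delta_\cB}{n}{\JJ}(u^j\zeta-1)$ se comporte comme la projection sur $\Fil^j\DD$, et que les facteurs interm�diaires $\TH{\Delta_\cB}{m}{\JJ}(u^j\zeta-1)$ pour $m<n$ sont inversibles, ce qui repose essentiellement sur $\tborne{\lgg{\JJ}}=0$ et la description des $\txin{m}{\inclus{\Jp}{\JJ}}$ comme produits de $\mlog{\Jp}{m}$ par une unit�. L'�tape de recollement par interpolation est plus formelle, une fois la disjonction des orbites galoisiennes observ�e.
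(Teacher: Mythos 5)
Le papier énonce ce corollaire sans démonstration, de sorte qu'il n'y a pas d'argument explicite à comparer. Votre reconstruction est néanmoins correcte et reste fidèle à la machinerie du texte~: réduction au $\Lambda\ot$-module $\Module{\JJ}{\cB}{N}{\DD}$ via le corollaire~\ref{pente0} et le théorème~\ref{thmordre}, analyse ponctuelle de $\phiun^{\nm}\Zinftyu{\JJ}{\cB}{N}(u^j\zeta-1)$ comme $\TH{\Delta_\cB}{n}{\JJ}(u^j\zeta-1)$ composé à droite avec des opérateurs inversibles (l'image de $\TH{\Delta_\cB}{n}{\JJ}(u^j\zeta-1)$ étant exactement $K_n\otimes\Fil^{j}\DD$ puisque, sous $\tborne{\lgg{\JJ}}=0$, $\txin{m}{\inclus{\III{i}}{\JJ}}$ est une unité de $\Zp[[x]]$ fois $\mlog{\III{i}}{m}$, dont la structure des zéros est transparente), puis recollement sur les $j$ par interpolation grâce à la disjonction des orbites de Galois des $u^j\zeta-1$. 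C'est exactement le style d'argument que le papier emploie pour la surjectivité de $\theta_N$ juste après l'introduction des modules $\Module{\JJ}{\cB}{-N}{\DD}$, et le calcul matriciel dans une base adaptée à un raffinement (matrice triangulaire supérieure dont la diagonale s'annule précisément sur $\Fil^{j}\DD$) donnerait une variante équivalente. Deux remarques mineures~: (a) votre développement du produit suppose implicitement $n>N$~; pour $n=N$ on a $\Zinftyu{\JJ}{\cB}{N}(u^j\zeta_N-1)=\id$, donc l'image est $K_N\otimes\DD$ tout entier, ce qui signale d'ailleurs que la définition de $\LLL{}{N}{\DD}$ devrait vraisemblablement porter sur $n>N$, sans quoi $\Module{\JJ}{\cB}{N}{\DD}\not\subset\LLL{}{N}{\DD}$~; (b) le corollaire restreint littéralement l'application à l'ensemble des éléments de pente nulle et non au sous-module qu'ils engendrent~; votre argument donne la surjectivité sur $\Module{\JJ}{\cB}{N}{\DD}$, ce qui est clairement le sens visé, et on peut retrouver l'énoncé littéral en ajoutant à une préimage donnée un élément de pente nulle du noyau (par exemple un $\Lambda\ot$-multiple convenable de $\bomega{n}{\JJ}\Zinftyu{\JJ}{\cB}{N}(v)$, la multiplication par un polynôme de $\Lambda$ ne modifiant pas la pente).
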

\bibliographystyle{amsplain}
\selectbiblanguage{french}
\bibliography{biblio}

\end{document}